\documentclass{amsart}

\usepackage{url}
\usepackage{amscd}
\usepackage{amsfonts}
\usepackage{amssymb}
\usepackage{euscript}
\usepackage{amsmath}
\usepackage{amsthm}
\usepackage[matrix, arrow, curve]{xy}
\usepackage{mathrsfs}

\usepackage{tikz}
\usepackage{tikz-cd}
\usetikzlibrary{arrows.meta}
\usepackage{dsfont}
\usepackage{enumitem}
\usepackage{derivative}
\usepackage {oplotsymbl}

\usepackage{appendix}

\newcommand{\Hol}[1]{\mathop{Hol(#1)}}

\newcommand{\M}[1]{\mathcal{M}_{0, #1}}
\newcommand{\Ms}[1]{\overline{\mathcal{M}}_{0, #1}}
\newcommand{\set}[1]{\underline{\mathbf{#1}}}
\newcommand{\PP}{\mathbb{P}^1}
\newcommand{\Gm}{\mathbb{G}_m}
\newcommand{\Gms}{\Gm\setminus\{1\}}
\newcommand{\EE}{\mathcal{E}}
\newcommand{\FF}{\mathcal{F}}
\newcommand{\LL}{\mathcal{L}}
\newcommand{\con}[2]{#1 \ast^! #2}

\newcommand{\jj}{\mathop{j_{\infty!}j_{0*}}}

\newcommand{\nea}[2]{\mathop{\psi_{#1}(#2)}}
\newcommand{\van}[2]{\mathop{\phi_{#1}(#2)}}
\newcommand{\Nea}[2]{\mathop{\Psi_{#1}(#2)}}
\newcommand{\Van}[2]{\mathop{\Phi_{#1}(#2)}}
\newcommand{\Shs}{\mathop{\mathop{\mathrm{Perv}}}(\Gm , 1)}
\newcommand{\Shsu}{\mathop{\mathop{\mathrm{Perv}^{un}}}(\Gm , 1)}

\newcommand{\Ho}[1]{\sideset{}{^{#1}}{\operatorname{\mathnormal{H}}}}
\newcommand{\Rde}{\operatorname{\mathbf{R}}}
\newcommand{\var}{var}
\newcommand{\can}{can}
\newcommand{\Var}{Var}
\newcommand{\ph}[1]{\varphi_{#1}}
\newcommand{\fg}[2]{\mathop{\pi_1}({#1}, {#2})}
\newcommand{\fgu}[2]{\mathop{\pi_1^{un}}({#1}, {#2})}
\newcommand{\pa}[3]{\mathop{\pi_1}({#1}, \allowbreak{#2}, \allowbreak{#3})}
\newcommand{\pau}[3]{\mathop{\pi_1^{un}}({#1}, {#2}, {#3})}
\newcommand{\Pau}[3]{\mathop{\Pi_1^{un}}({#1}, {#2}, {#3})}

\newcommand{\R}{\mathop{{\mathbf{R}}}}
\newcommand{\Rm}{\mathop{{\tau^{\le 0}\operatorname{\mathbf{R}}}}}

\newcommand{\kf}{\mathbf{k}}
\newcommand{\bra}[1]{\langle\!\langle #1 \rangle\!\rangle}
\newcommand{\cop}[1]{\mathop{\Delta_*}#1}
\newcommand{\intr}[1]{\mathop{\imath_{#1}}}
\newcommand{\aug}[1]{\mathop{\varepsilon}(#1)}
\newcommand{\pen}{\operatorname{\pentago}}
\newcommand{\Univer}[1]{\mathcal{U}_{#1}}
\newcommand{\ab}{\mathrm{ab}}
\newcommand{\muop}{\mathop{\mu}}
\newcommand{\PA}[2]{\mathop{\mathcal{P}}(#1,#2)}
\newcommand{\PAu}[2]{\mathop{\mathcal{P}^{un}}(#1,#2)}

\theoremstyle{plain}
\newtheorem{prop}{Proposition}
\newtheorem{theorem}{Theorem}

\newtheorem*{theorem*}{Theorem}
\newtheorem*{cor*}{Corollary}

\theoremstyle{definition}

\theoremstyle{remark}
\newtheorem{rem}{Remark}

\begin{document}

\title{Multiplicative convolution and double shuffle relations}
\author{Nikita Markarian}

\begin{abstract}
We develop a geometric approach to the regularized double shuffle relations for 
multiple zeta values, based on convolution of perverse sheaves on $\mathbb{C}^*$ 
and inspired by the approach of Deligne and Terasoma. We introduce semi-holonomy 
isomorphisms associated with pro-unipotent paths and show that their compatibility 
with multiplicative convolution is equivalent to a condition on the pro-unipotent 
fundamental group, the homological pentagon equation. We prove that this condition 
is equivalent to the regularized double shuffle relations, yielding a geometric 
proof that the pentagon equation implies these relations. The approach is purely 
topological and avoids Hodge-theoretic and Tannakian methods.
\end{abstract}

\email{nikita.markarian@gmail.com}

\date{}

\address{UMR 7501, Université de Strasbourg,
7 rue René Descartes,
67084 Strasbourg Cedex, France}
\maketitle

\section*{Introduction}

Multiple zeta values are periods given by iterated integrals on the interval, 
and hence appear as 
the coefficients of the Drinfeld associator \cite{Drin, LeM}. As such, they satisfy 
the relations imposed on the associator, namely the pentagon and hexagon equations 
introduced in \cite{Drin}. It was later shown in \cite{Fu} that the pentagon 
equation implies the hexagon equations. It is widely believed that the pentagon 
equation provides a complete set of geometric (or motivic) relations among multiple 
zeta values. The importance of understanding these motivic relations is underscored 
by the fact that multiple zeta values, viewed as motivic periods, generate the 
category of mixed Tate motives over $\mathbb{Z}$ \cite{Brown2012}.

Another fundamental family of relations is given by the regularized double shuffle relations, introduced in \cite{IKZ, Racinet}.
In \cite{IKZ}, it was conjectured that the regularized double shuffle relations form a complete set of relations among multiple zeta values.
Clarifying the precise relationship between associator relations and regularized double shuffle relations, and understanding whether one family implies the other or whether intermediate structures exist, remains a central open problem in the theory of multiple zeta values.

The regularized double shuffle relations
arise from rearrangements of terms in the product of series defining multiple zeta 
values, although they also admit a geometric formulation (see, for example, 
\cite{Mar_cell}). Such manipulations of series can be interpreted in terms of the 
multiplication of Mellin transforms of functions given by polylogarithms supported 
on the interval $[0,1]$, which corresponds to convolution of polylogarithms.

This observation can be viewed as a motivation for the program of Deligne and 
Terasoma \cite{DLet, DTerICM, DTerPr}, which aims to relate double shuffle 
relations to convolution of perverse sheaves. A central goal of this approach is to 
show that the associator relations imply the regularized double shuffle relations. 
Although this implication has since been established by other methods, the geometric perspective remains of 
considerable interest.

In this paper, I develop the approach of Deligne and Terasoma.
There are two main differences from their approach.
First, I do not use Hodge theory; all constructions are purely topological.
Second, instead of working within a Tannakian formalism for a Serre quotient of the category of perverse sheaves, I give a more explicit treatment, avoiding this formalism altogether.

One of the main results of this paper is a new geometric proof that the pentagon equation implies the regularized double shuffle relations. More generally, I introduce the homological pentagon equation and prove that it is equivalent to the regularized double shuffle relations.
I also establish several auxiliary results that may be of independent interest.

The statement that the pentagon equation implies the regularized double shuffle relations was proved in \cite{Furusho2011, HiroseSato, Furusho2022}.
The approach of Deligne–Terasoma has been further developed in \cite{Enriquez2021, Enriquez2022, Enriquez2023}.
The Betti realization considered there is close in spirit to the approach of the present paper, but remains purely algebraic in nature.

A central feature of this work is the development of a purely geometric approach to the regularized double shuffle relations.
The framework used to describe multiplicative convolution of sheaves is closely related to the geometric construction of the self-intersection map, a construction sketched in the Appendix and inspired by \cite{HainTur}.
This suggests a connection between the self-intersection map and the harmonic coproduct, which will be explored in future work.

More broadly, the present framework suggests a possible geometric explanation of the relationship between the Kashiwara--Vergne problem and double shuffle relations
(see, e.g., \cite{SCHNEPS_KV}).
%

The main tool used in this paper is what I call semi-holonomy.
Semi-holonomy provides anothe incarnation of the associator:
 different choices of paths produce different isomorphisms, and certain relations among these isomorphisms encode the regularized double shuffle relations.
It is an isomorphism between the vanishing cycles at $1$ of a perverse sheaf $\FF$ on $\mathbb{C}^*$, which is smooth and pro-unipotent outside $1$, and the space $\Ho{0}(\PP,\, \jj \FF)$, where $\jj$ denotes the extension of $\FF$ by $!$ at $\infty$ and by $*$ at $0$.
An isomorphism of this type already appeared in a different context in \cite{Katz+2012}.

The key observation is that there are many such isomorphisms, depending on the choice of a pro-unipotent path from the tangential base point at $1$ to that at $0$ in $\mathbb{C}^*\setminus{1}$.
I call the resulting map the semi-holonomy along this path.
The standard choice of the isomorphism corresponds to the straight interval $[0,1]$.
This isomorphism is transcendental and does not preserve the mixed Hodge structures carried by both sides, although it preserves the weight structure.
This point of view provides a new perspective on the Drinfeld associator.

A crucial feature of the Deligne--Terasoma approach is that vanishing cycles define a tensor functor with respect to multiplicative convolution of sheaves.
This is analogous to the Thom--Sebastiani theorem \cite{Sebastiani1971, Massey2001}, which identifies vanishing cycles of a sum of singularities with the tensor product of the vanishing cycles of the summands.
One may ask whether the isomorphisms provided by semi-holonomy are compatible with this tensor structure.
This is indeed the case for the semi-holonomy along the interval, and this constitutes the first main result of the paper.

This naturally leads to the problem of characterizing those pro-unipotent paths for which semi-holonomy is compatible with convolution.
The resulting condition, which I call the homological pentagon equation, turns out to be equivalent to the regularized double shuffle relations for the corresponding element of the completed free algebra.
In particular, the pentagon equation implies this compatibility.

Another important ingredient in the Deligne--Terasoma approach is the transport algebra $W$, which acts on vanishing cycles.
I  describe explicitly the difference between semi-holonomies in terms of $W$. 
The resulting formula, expressed via the transport algebra and Fox derivatives, is of independent interest.

It is shown in \cite{DTerPr} that the category of modules over $W$ can be realized as a certain Serre quotient of the category of perverse sheaves on $\mathbb{C}^*$ that are smooth and pro-unipotent outside $1$.
The multiplicative convolution endows this category with a symmetric monoidal structure, which, via Tannakian formalism, gives rise to a coproduct on $W$, the harmonic coproduct, which is another central notion of the Deligne--Terasoma approach.
In this paper, I give a direct geometric construction of this coproduct.
In particular, I avoid passing to quotient categories by introducing the notion of a sheaf without sections supported at $1$.
This subcategory is stable under multiplicative convolution.

I also note a connection with the results of \cite{AHNRS}. These results, which rely substantially on the theory developed in \cite{HR} and its antecedents in \cite{Furusho2011}, exhibit features that are closely related to those appearing in the present work. 

The remainder of the paper is organized as follows.
In Section 1, I introduce the semi-holonomy along the interval.
Section 2 introduces notation and recalls several constructions used throughout the paper.
In Section 3, I construct certain nearby and vanishing cycles associated with convolution.
While these constructions admit a simple geometric interpretation, I present them directly from the definitions, with a view toward future applications to Hodge theory.
In Section 4, I prove the compatibility of the standard semi-holonomy isomorphism with multiplicative convolution.
In Section 5, I introduce the transport algebra and define the harmonic coproduct on it.
In Section 6, I give an explicit formula describing semi-holonomy in terms of the action of the transport algebra and Fox derivatives.
In Section 7, I introduce the notion of the homological pentagon equation and prove its equivalence with the regularized double shuffle relations, building on the results of the preceding sections. In the appendix I give a homological definition of the self-intersection map.

\smallskip
{\bf Acknowledgments.} I am grateful to A.~Alekseev, B.~Enriquez, M.~Finkelberg, H.~Furusho and M.~Kapranov for fruitful and insightful discussions.
I would like to thank the Max Planck Institute for Mathematics and IHES for hospitality and excellent
        work conditions. 
This project is supported by the PAUSE program and the ITI IRMIA++.

\subsection*{Notations}

All algebraic varieties are defined over $\mathbb{C}$. All sheaves take values in the category of vector spaces over a field $\kf$ of characteristic $0$.

$\PP$ is the projective line, $\Gm=\PP\setminus\{0, \infty\}$ is the multiplicative group.

$\Shs$ (resp. $\Shsu$) denotes the category of perverse sheaves on $\Gm$ that are smooth (resp. smooth and pro-unipotent) outside $1$.

We omit the derived functor notation: for instance, $j_*$ stands for $\Rde j_*$ and $i^!$ for $\Rde i^!$.

Let $f\colon X \to D \subset \mathbb{C}$ be a morphism from a complex algebraic variety $X$ to a disc $D \subset \mathbb{C}$,
with the central fiber  $X_0 = f^{-1}(0)$, the inclusion map $i\colon X_0 \to X$,
and let $\FF^\bullet$ be a complex of constructible sheaves on $X$. 
We denote by $\phi_f(\FF^\bullet)$ and $\psi_f(\FF^\bullet)$ the complexes of vanishing and nearby cycles on $X_0$.

The functors $\van{f}{-}[-1]$ and $\nea{f}{-}[-1]$ preserve the category of perverse sheaves.
They fit into the (Milnor) distinguished triangles:
\begin{equation}
\begin{tikzcd}[cramped]
\nea{f}{\FF^\bullet}[-1]  \arrow[r, "can"]
& \van{f}{\FF^\bullet}[-1]  \arrow[r] 
& i^*\FF^\bullet \arrow[r, "+1"] 
& {}
\end{tikzcd}
\label{can}
\end{equation}

\begin{equation}
\begin{tikzcd}[cramped]
i^! \FF^\bullet \arrow[r]
& \van{f}{\FF^\bullet}[-1]  \arrow[r,"var"]
& \nea{f}{\FF^\bullet}[-1] \arrow[r, "+1"] 
& {}
\end{tikzcd}
\label{var}
\end{equation}

We denote the cohomology of vanishing and nearby cycles at a point by capital letters:
\begin{equation}
\Van{f}{\FF}=\Ho{0}(\van{f}{\FF^\bullet}[-1]) \quad \mbox{and} \quad \Nea{f}{\FF}=\Ho{0}(\nea{f}{\FF^\bullet}[-1] )
\end{equation}

For $a \in {0,1,\infty}$, we denote by $i_a\colon \mathrm{pt} \to \PP$ the inclusion of the point $a$.

We denote by $1-z$ and $z$ the tangential base points of $\M{4}$ at $1$ and $0$, respectively, defined by the corresponding functions.
For brevity, we also denote these tangential base points simply by $1$ and $0$.

\section{Deligne fiber functor}
\label{deligne}

\subsection{Vanishing cycles}
\label{van1}

Firstly, prove a general useful result.
Let $I$ be the real interval $[0, 1]$ in $\PP$ not containing $\infty$, let
$i_I\colon I\to \PP$ be the closed embedding.
Denote by $j_\infty$ the inclusion of $\mathbb{A}^1$ into $\PP$.
 
\begin{prop}[Shrinking]
For a complex of constructible sheaves $\FF^\bullet$ on  $\mathbb{A}^1$ smooth
outside interval $I$,  composition of the excision isomorphism and
the   forgetful map
give an isomorphism
\begin{equation}
\begin{tikzcd}[cramped, sep=small]
H^*(I, \,i_I^! \FF^\bullet)\arrow[r,equal]
& H^*_I(\PP,\, j_{\infty !} \FF^\bullet) \arrow[r]
& \Ho{*}(\PP,\, j_{\infty!}\FF^\bullet)
\end{tikzcd}
\end{equation}
\label{basic}
\end{prop}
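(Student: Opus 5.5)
The plan is to use the excision triangle for the closed subset $I\subset\PP$ with open complement $U=\PP\setminus I$, and to reduce to a vanishing statement on $U$. Write $\GG=j_{\infty!}\FF^\bullet$, let $u\colon U\to\PP$ be the open inclusion, and take the distinguished triangle
\begin{equation*}
i_{I*}i_I^!\GG\to \GG\to u_*u^*\GG\xrightarrow{+1}.
\end{equation*}
Taking $\Rde\Gamma(\PP,-)$ gives a long exact sequence
\begin{equation*}
\cdots\to H^*_I(\PP,\GG)\to H^*(\PP,\GG)\to H^*(U,\GG|_U)\to\cdots,
\end{equation*}
in which the first arrow is exactly the forgetful map of the statement. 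Since $I$ misses $\infty$, we have $i_I^!\GG=i_I^!\FF^\bullet$, which identifies $H^*_I(\PP,\GG)$ with $H^*(I,i_I^!\FF^\bullet)$. So the proposition will follow once we show $H^*(U,\GG|_U)=0$.

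The key step is this vanishing. The space $U=\PP\setminus I$ is an open disc in the Riemann sphere containing $\infty$; via a coordinate centered at $\infty$ it is homeomorphic to $\mathbb{C}$. On $U\setminus\{\infty\}=\mathbb{C}\setminus I$, the sheaf $\FF^\bullet$ has locally constant cohomology sheaves by hypothesis, and at $\infty$ the restriction $\GG|_U$ is $v_!$ of it, where $v\colon U\setminus\{\infty\}\to U$. Hence
\begin{equation*}
H^*(U,\GG|_U)=H^*(U,\,v_!\,\FF^\bullet|_{U\setminus\{\infty\}}).
\end{equation*}
The space $U\setminus\{\infty\}=\mathbb{C}\setminus I$ is homotopy equivalent to a circle, and $U$ deformation retracts radially onto $\infty$. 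I would compute via the triangle $v_!v^*\to \mathrm{id}\to (i_\infty)_*i_\infty^*$ on $U$ applied to $\LL:=v_*v^*\GG|_U$, or more directly as follows. Choose a closed disc $\bar B$ around $\infty$ inside $U$. A locally constant complex on the punctured disc $U\setminus\{\infty\}$ has cohomology on $U\setminus\{\infty\}$ equal to its cohomology on a small punctured neighbourhood of $\infty$: both are the cohomology of the circle with coefficients in the monodromy local system. Consequently the restriction map
\begin{equation*}
H^*(U,\,v_*v^*\GG)\to i_\infty^*\,v_*v^*\GG
\end{equation*}
is an isomorphism. The cone of $v_!v^*\GG\to v_*v^*\GG$ is supported at $\infty$ and equals $i_\infty^*v_*v^*\GG$. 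Comparing global sections then yields $H^*(U,v_!v^*\GG)=0$.

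The main technical point is justifying these homotopy-invariance statements for a complex of constructible sheaves that is only locally constant away from $\infty$ on $U$, i.e.\ that the global cohomology on the disc of $v_*$ of a local system equals its stalk at the centre. I would handle it by dévissage on the cohomology sheaves, reducing to a single local system $L$ on the punctured disc. There one uses the conic structure: $U$ is a cone over the circle, and $v_*L$ is constant along rays, so the standard result (Kashiwara–Schapira's conic sheaf lemma) gives $\Rde\Gamma(U,v_*L)=(v_*L)_\infty$. Everything else is formal.
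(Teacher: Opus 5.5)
Your proof is correct and takes the same route as the paper: the excision triangle for $I\subset\PP$, using $i_I^!j_{\infty!}\FF^\bullet=i_I^!\FF^\bullet$, reduces everything to the vanishing of the cohomology of the $!$-extension across $\infty$ of a complex of local systems on $\PP\setminus(I\cup\{\infty\})\simeq\Gm$ to $\PP\setminus I\simeq\mathbb{A}^1$. The paper simply asserts this vanishing; you justify it via the cone of $v_!\to v_*$, homotopy invariance and d\'evissage on the cohomology sheaves, which fills in that step soundly.
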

\begin{proof}
Let 
$j_{\setminus I}\colon \PP\setminus I\to \PP$ be the open embedding. Consider the exact triangle
\begin{equation}
\begin{tikzcd}[cramped, sep=small]
{i_I}_*i_I^! \FF^\bullet \arrow[r, equal]
&{i_I}_*i_I^! j_{\infty!}\FF^\bullet \arrow[r]
& j_{\infty!}\FF^\bullet \arrow[r] 
&  j_{\setminus I *} {j_{\setminus I}}^*j_{\infty!} \FF^\bullet \arrow[r, "+1"] 
& {}
\end{tikzcd}
\label{retract}
\end{equation}
The pair of topological space $(\PP\setminus I,\, \PP\setminus \{I, \infty\}) $ is homeomorphic to the pair $(\mathbb{A}^1,\, \Gm)$.
The restriction of  $\FF^\bullet$  
on $\PP\setminus \{I, \infty\}$ is a complex of local systems.
To show that cohomology of its $!$-extension on the bigger space
vanishes, consider the corresponding complex of local systems on $\Gm$.
One may see that for a 
local system $\LL$ on  $\Gm$, cohomology  $\Ho{*}(\mathbb{A}^1,\, j_{\infty!} \LL)$ vanishes, which follows that cohomology of a complex of local systems vanishes as well.
It follows that cohomology of the last term in (\ref{retract})
vanishes.
Thus, cohomologies of first two terms in (\ref{retract})
are isomorphic.
\end{proof}

Let $j\colon \Gm \to \PP$ be the embedding of the multiplicative group to 
the projective line.
Factor $j$ as $j_\infty \circ j_0$, where $j_0$ is the
inclusion of $\Gm$ into $\mathbb{A}^1$, and $j_\infty$ is the inclusion of $\mathbb{A}^1$ into $\PP$.

Let $\FF$ be a perverse sheaf from $\Shs$.
Denote by $(0, 1]\subset \Gm$ the intersection of $I$ and $\Gm$.
Combining the canonical isomorphism of local cohomologies
 $$H^\bullet_{(0,1]}(\Gm,\, \FF) \simeq H^\bullet_{(0,1]}(\mathbb{A}^1,\, j_{0*}\FF) $$
with the excision isomorphism 
 $$ H^\bullet_{(0,1]}(\mathbb{A}^1,\, j_{0*}\FF)\simeq H^\bullet_{(0,1]}(\PP,\, \jj\FF) $$
 followed by 
 the  forgetful map   to $H^\bullet(\PP,\, \jj\FF)$ gives a map from
$H^0_{(0,1]}(\Gm,\, \FF)$ to  $H^0(\PP,\, \jj\FF)$.

\begin{prop}
For a  perverse sheaf $\FF\in \Shs$  
cohomology $H^0_{(0,1]}(\Gm,\, \FF)$ is naturally isomorphic to vanishing cycles $\Van{{1-z}}{\FF} $ of $\FF $ at $1$ 
and 
the  map defined above gives an isomorphism
\begin{equation}
\begin{tikzcd}[cramped, sep=small] \ph{I} \colon \Van{{1-z}}{\FF} \arrow[r] &  \Ho{0}(\PP,\, \jj \FF) \end{tikzcd}
\label{equality}
\end{equation} 
 For $i\ne 0$ cohomologies $\Ho{i}(\PP,\, \jj \FF)$ vanish.
\label{phi}
\end{prop}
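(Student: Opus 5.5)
The plan is to reduce everything to Proposition~\ref{basic} applied to $j_{0*}\FF$, and then to compute the local cohomology along the interval by splitting it into a piece near $0$, a piece near $1$, and the smooth part in between. First, $j_{0*}\FF$ is a complex of constructible sheaves on $\mathbb{A}^1$ that is smooth outside $\{0,1\}\subset I$. Proposition~\ref{basic} then gives an isomorphism
$$H^*_I(\PP,\,\jj\FF)\simeq H^*(I,\, i_I^! j_{0*}\FF)\xrightarrow{\ \sim\ } \Ho{*}(\PP,\,\jj\FF),$$
where the second map is the forgetful map. So it suffices to identify $H^*(I,\, i_I^! j_{0*}\FF)$ with $\Van{1-z}{\FF}$ placed in degree $0$.

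Next, remove the endpoint $0$. Consider the triangle for the closed point $\{0\}\subset I$ with open complement $(0,1]$. The term supported at $0$ is $i_0^! j_{0*}\FF$, which vanishes because $i_0^! j_{0*}=0$ for the complementary open embedding. Hence
$$H^*(I,\, i_I^! j_{0*}\FF)\simeq H^*_{(0,1]}(\mathbb{A}^1,\, j_{0*}\FF)\simeq H^*_{(0,1]}(\Gm,\,\FF).$$
This is exactly the chain of isomorphisms used to define $\ph{I}$, so $\ph{I}$ is an isomorphism once the last group is computed.

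Then localize at $1$. Pick a small disc $D$ around $1$ and a point $1-\varepsilon$. On $(0,1-\varepsilon]$ the sheaf $\FF$ is a shifted local system $\LL[1]$. Local cohomology of a local system along a half-closed segment whose open end goes into a puncture vanishes: locally this is the pair $(\mathbb{A}^1,\Gm)$ computation already used in the proof of Proposition~\ref{basic}. By excision and the triangle for the decomposition $(0,1]=(0,1-\varepsilon)\cup[1-\varepsilon,1]$, this should give
$$H^*_{(0,1]}(\Gm,\FF)\simeq H^*_{R}(D,\FF),$$
where $R=[1-\varepsilon,1]$ is a ray from $1$ to $\partial D$. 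Now use the standard local description of vanishing cycles: for $f=1-z$, $\van{f}{\FF}[-1]$ at the point $1$ is computed by $R\Gamma_{\{\operatorname{Re} f\le 0\}}(D,\FF)$ up to the usual orientation convention. The half-plane deformation-retracts onto a ray, and $\FF$ is smooth on $D\setminus\{1\}$, so this equals $R\Gamma_R(D,\FF)$. The sign of the real part is chosen so that the ray points along $(0,1]$, which is tangent to the base point $1-z$. Perversity of $\FF$ gives that $\van{f}{\FF}[-1]$ is a perverse sheaf on a point, i.e.\ concentrated in degree $0$. This yields both $H^0_{(0,1]}(\Gm,\FF)\simeq\Van{1-z}{\FF}$ and the vanishing of $\Ho{i}(\PP,\,\jj\FF)$ for $i\ne0$.

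The main obstacle is this last identification: matching local cohomology with support on a ray with $\van{1-z}{\FF}[-1]$, naturally and with the correct shift and orientation. I would first check it directly on the generators, namely local systems, $j_!$, $j_*$ extensions and skyscrapers at $1$. I would then argue by naturality and the five lemma, using the Milnor triangle~(\ref{var}). That triangle is the expected source of the map, since $H_R$ sits between $i_1^!\FF$ and the sections of the local system on $D\setminus R$, which compute nearby cycles.
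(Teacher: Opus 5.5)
Your proposal is correct and follows the paper's route. You apply Proposition~\ref{basic} to $j_{0*}\FF$; this, rather than the $j_{0!}\FF$ written in the paper's proof, is what matches the definition of $\ph{I}$. You remove the endpoint $0$ via $i_0^!j_{0*}=0$, and you identify $H^\bullet_{(0,1]}(\Gm,\FF)$ with $\van{1-z}{\FF}[-1]$, which is concentrated in degree $0$ by perversity. The only difference is that the paper cites this last identification from \cite{Galligo1985}, whereas you sketch it via excision to a disc and the half-plane description of vanishing cycles.

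In that sketch the triangle is set up the wrong way round. The closed piece of $(0,1]$ should be $(0,1-\varepsilon]$, whose local cohomology vanishes as you say. Its open complement is $(1-\varepsilon,1]$, a radius of the open disc of radius $\varepsilon$ about $1$. If you take $[1-\varepsilon,1]$ as the closed piece instead, the remaining term is local cohomology along an open arc from the puncture $0$ to the removed segment, and that is a fiber of $\LL$ in degree $0$, not zero.
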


\begin{proof}
Apply Proposition \ref{basic} to the perverse sheaf $j_{0!}\FF$.
As $H^0_{(0,1]}(\Gm,\, \FF)$ is equal to $H^0_I(\mathbb{A}^1,\, j_{0!}\FF)$, it gives  isomorphism (\ref{equality}). An isomorphism 
between $H^0_{(0,1]}(\Gm,\, \FF)$ and $\Van{{1-z}}{\FF}$
is established in \cite{Galligo1985} and may be taken as a definition of vanishing cycles. Moreover, it is shown there, that
 the exact triangle
\begin{equation}
\begin{tikzcd}[cramped]
H^\bullet(i_1^! \FF)\arrow[r]
& H^\bullet_{(0, 1]}(\Gm,\,  \FF) \arrow[r] 
& H^\bullet_{(0, 1)}(\Gm,\, \FF) \arrow[r, "+1"]
& {}
\end{tikzcd}
\label{complex}
\end{equation}
associated with the pair $(0, 1)\subset (0, 1]$ in $\Gm$   is isomorphic to the cohomology of the Milnor triangle (\ref{var}) for $\FF$, see also
 \cite[Remark 9.5]{KapSch}.
\end{proof}

Above we constructed not only the class in $\Ho{1}(\PP,\, \jj \FF[-1])$,
but the corresponding extension. This extension obeys the
natural  condition, which essentially specify in which direction 
interval $I$ meets point $1$, see Proposition \ref{frame} below.

\begin{rem}
Following \cite[Section 3]{Katz+1991}, one may show that the dimension of
$\Van{{1-z}}{\FF}$ coincides with the one of $\Ho{0}(\PP,\, \jj \FF)$ by means of 
the exactness properties of push-forwards along affine morphisms of perverse sheaves, see
\cite[4.1]{BBD}. Indeed, they imply that 
$\Ho{>0}(\PP,\, \jj \FF)=\Ho{>0}(\PP\setminus\{0\},\, j_!^\infty\FF)=0$
and also $\Ho{<0}(\PP,\, \jj \FF)=0$, by the Verdier duality. Thus, 
dimension of  $\Ho{0}(\PP,\, \allowbreak \jj \FF)$ equals to the Euler characteristic
of the cohomology, which can be shown to be equal to the dimension of $\Van{{1-z}}{\FF}$. This gives an alternative proof of equality of dimensions, but not the  isomorphism constructed above.
\label{purite} 
\end{rem}

\subsection{Nearby cycles}
For  a perverse sheaf on $\PP$,
which is an extension of 
a shifted local system from  $\PP\setminus \{0, \infty, 1\}$
by $*$, $*$, and $!$ in some order, Proposition \ref{phi} implies an isomorphism
between vanishing cycles at points, to which the local system is extended by $*$.
These vanishing cycles are isomorphic to nearby cycles,
the isomorphism is given by $var$ from (\ref{var}).
The following proposition reveals the nature of this isomorphism.

Denote by $j_1\colon \Gm\setminus\{1\} \to \Gm$ the open embedding
and by 
\begin{equation}
r\colon z\mapsto 1-z 
\label{reflexion}
\end{equation}
the automorphism of $\PP$.

\begin{prop}
Let $\LL$ be a local system on $\Gm\setminus \{1\}$.
The composition of isomorphisms
\begin{equation}
\begin{tikzcd}[cramped]
\Nea{{1-z}}{j_{1*}\LL[1]}
& \Van{{1-z}}{j_{1*}\LL[1]} \arrow[r, "\ph{I}"] \arrow[l, "var"']
& \Ho{0}(\PP,\, \mathop{j_{\infty!}j_{0*} j_{1*}}\LL[1])\arrow[d,"r"] \\
 \Nea{{z}}{j_{1*}\LL[1]}
& \Van{{z}}{j_{1*}\LL[1]}  \arrow[l, "var"'] \arrow[r, "\ph{I}"]
& \Ho{0}(\PP,\, \mathop{j_{\infty!}j_{1*} j_{0*}}\LL[1])
\end{tikzcd}
\end{equation}
is the holonomy along $I$.
\label{hol}
\end{prop}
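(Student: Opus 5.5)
The plan is to compute both rows of the diagram with the concrete local-cohomology model of Proposition \ref{phi}. In that model $\Van{{1-z}}{\FF}=H^0_{(0,1]}(\Gm,\FF)$, and $var$ is the map coming from the triangle (\ref{complex}) of the pair $(0,1)\subset(0,1]$. Then I want to check that the vertical map $r$, which exchanges $0$ and $1$ and maps $I$ to itself, matches up the two local-cohomology descriptions, with $I$ entered from opposite ends.

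The steps are as follows.

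\emph{Step 1.} Put $\FF=j_{1*}\LL[1]$. The stalk of $\FF$ at $1$ is $(\LL_{x})^{T}[1]$, the invariants of the monodromy $T$ around $1$, where $x$ is a point near $1$. By (\ref{complex}) the map $var$ corresponds to restriction
\[
H^0_{(0,1]}(\Gm,\FF)\to H^0_{(0,1)}(\Gm,\FF)=H^1_c((0,1),\LL).
\]
The second space is identified with the fibre of $\LL$ at any point of $(0,1)$, via the class of a local system on an open interval. It is also identified with $\Nea{{1-z}}{\FF}$, namely the fibre of $\LL$ near $1$ along the tangential direction $1-z$. Hence the top-left isomorphism $\Nea{{1-z}}{\FF}\cong\Van{{1-z}}{\FF}$ is explicit: a vector $v\in\LL_x$, transported along $I$, gives a compactly supported class on $(0,1)$, and this class lifts uniquely to $(0,1]$ because the $*$-extension at $1$ makes $H^\bullet(i_1^!\FF)$ vanish in the relevant degrees.

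\emph{Step 2.} Push this class into $\Ho{0}(\PP,\jj\FF)$ by $\ph{I}$. By construction, its image is the class supported on $I$ given by the section $v$ transported along $I$.

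\emph{Step 3.} Apply $r$, which maps $I$ to itself. The image is a class in $\Ho{0}(\PP,\, j_{\infty!}j_{1*}j_{0*}\LL[1])$, again supported on $I$ and given by the same flat section.

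\emph{Step 4.} Read this class through the bottom row, now near $0$. By Proposition \ref{basic} and the analogue of Proposition \ref{phi} at $0$, it comes from $H^0_{[0,1)}$. Its restriction to $(0,1)$ is the same compactly supported class as before. Under $var$ at $0$, it is therefore the value of that flat section near $0$. Comparing the value near $1$ with the value near $0$ is exactly parallel transport along $I$. This is the holonomy.

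The main obstacle is Step 1 together with Step 4. One must check that the identifications of $H^0_{(0,1)}(\Gm,\FF)$ with the fibre of $\LL$, made separately at the two ends, are compatible with Galligo's identification of the triangle (\ref{complex}) with the Milnor triangle (\ref{var}). In particular, the signs and the tangential-basepoint conventions must agree, so that no extra sign or monodromy factor appears. I would first treat the rank-one trivial case $\LL=\kf$ to fix normalizations. Then I would note that all the maps are natural in $\LL$ and local along $I$, which reduces the general case to the constant one on a neighbourhood of $I$.
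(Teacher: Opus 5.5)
Your argument is essentially the paper's own proof. Like you, it uses Galligo's local-cohomology model from Proposition~\ref{phi} to identify $\Ho{0}(\PP,\, \mathop{j_{\infty!}j_{0*}j_{1*}}\LL[1])$ with flat sections of $i_I^!\LL[1]$ over the interior of $I$, and it identifies $\var$ at either end with taking the limit of such a section at that endpoint. (Note that $H^0_{(0,1)}(\Gm,\FF)$ is therefore a space of sections, not $H^1_c((0,1),\LL)$.)

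The normalization you flag is left tacit in the paper, and it matters. $r^*$ acts by $-1$ on $\Ho{0}(\PP,\, \mathop{j_{\infty!}j_{0*}j_{1*}}\kf[1])$, so if the bottom row is the construction at $1$ transported to $0$ by $r$ and the vertical arrow is the identity, your rank-one check returns $-\Hol{I}$. The statement holds when both ends are read off the same oriented interval, as the paper implicitly does (cf.\ Proposition~\ref{hol2}), or when the vertical arrow absorbs this sign.
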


\begin{proof}
The statement follows from the proof of Proposition~\ref{phi}. For $j_{1*}\LL[1]$, the isomorphism $\ph{I}$ identifies the space with sections of $i_I^!\LL[1]$ over the interior of $I$. The identification with nearby cycles is obtained by taking limits of these sections as one approaches the endpoints.
\end{proof}

\begin{rem}
Thus, $\Ho{0}(\PP,, j_{\infty!}j_{0*} j_{1*}\LL[1])$ may be thought of as the space of sections of $\LL$ over the interval $(0,1)$, and the isomorphisms $\ph{I}$ as the specialization maps to the nearby cycles at the endpoints. Compare~\cite[Introduction]{DGal}.
\end{rem}

Morphisms between different extensions of a smooth perverse sheaf 
on $\Gm\setminus \{1\}$ may be expressed in terms of Milnor triangles.

\begin{prop}
The following diagram commutes
\begin{equation}
\begin{tikzcd}[cramped]
\Ho{0}(i_1^!\FF) \arrow[r] \arrow[d, equal]
& \Ho{0}(\PP,\, \jj \FF) \arrow[r] 
& \Ho{0}(\PP,\, \mathop{j_{\infty!}j_{0*} j_{1*}j_1^*}\FF)\\
\Ho{0}(i_1^!\FF)  \arrow[r]
& \Van{{1-z}}{\FF} \arrow[r, "var"] \arrow[u,"\ph{I}"]
& \Nea{{1-z}}{\FF}=\Van{{1-z}}{j_{1*}j_1^*\FF} \arrow[u,"\ph{I}"] 
\end{tikzcd}
\end{equation}
where the second line is the cohomology of the Milnor triangle (\ref{var})
and the first line is the cohomology of the standard trianlge
\begin{equation}
\begin{tikzcd}[cramped]
{i_1}_*i_1^! \FF\arrow[r]
& \jj\FF \arrow[r] 
& \mathop{j_{1*}j_1^*j_{\infty!}j_{0*} }\FF \arrow[r, "+1"]
& {}
\end{tikzcd}
\end{equation}
\label{nea2}
\end{prop}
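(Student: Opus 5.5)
The plan is to realize both rows of the diagram as hypercohomology of triangles of sheaves on the interval, using the local-cohomology description of $\ph{I}$ from the proof of Proposition~\ref{phi}. By that proof, $\ph{I}$ is the composite
\[
\Van{{1-z}}{\FF}\simeq H^0_{(0,1]}(\Gm,\FF)\simeq H^0_{I}(\PP,\jj\FF)\to \Ho{0}(\PP,\jj\FF).
\]
Moreover, the Milnor triangle (\ref{var}) is identified, following \cite{Galligo1985}, with the triangle (\ref{complex}) of the pair $(0,1)\subset(0,1]$. So the second row becomes
\[
H^0(i_1^!\FF)\to H^0_{(0,1]}(\Gm,\FF)\to H^0_{(0,1)}(\Gm,\FF).
\]
I will use throughout that $\Nea{{1-z}}{\FF}=\Van{{1-z}}{j_{1*}j_1^*\FF}$ is identified, again by Proposition~\ref{phi}, with $H^0_{(0,1]}(\Gm,j_{1*}j_1^*\FF)$. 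Since $i_1^!j_{1*}=0$, this is the same as $H^0_{(0,1)}(\Gm,\FF)$, i.e.\ sections with supports on the open interval.

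The core step is to show that the triangle ${i_1}_*i_1^!\FF\to\FF\to j_{1*}j_1^*\FF$ on $\Gm$ induces, after taking $R\Gamma_{(0,1]}$, exactly the pair triangle (\ref{complex}). Its first term is $R\Gamma_{(0,1]}({i_1}_*i_1^!\FF)=i_1^!\FF$ because $1\in(0,1]$. Its last term is $R\Gamma_{(0,1]}(j_{1*}j_1^*\FF)=R\Gamma_{(0,1)}(\Gm\setminus\{1\},\FF)$, using $i^!j_*=j_*$-adjunction on the complement. This is the standard identification of the excision triangle for $(0,1)\subset(0,1]$ with the local-cohomology triangle, so I expect it to be formal.

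Next I apply the functor $\jj$ and the excision isomorphisms of Proposition~\ref{phi}. Since the triangle ${i_1}_*i_1^!\FF\to\jj\FF\to j_{1*}j_1^*\jj\FF$ is the image under $\jj$ of the triangle above, and $\jj$ commutes with restriction near $1$, the isomorphisms $H^\bullet_{(0,1]}(\Gm,-)\simeq H^\bullet_I(\PP,\jj-)$ are natural in the sheaf. The forgetful maps $H^\bullet_I(\PP,-)\to H^\bullet(\PP,-)$ are also natural. Together these give a morphism of triangles: from the $R\Gamma_I$ version to the $R\Gamma(\PP,-)$ version of the first row. Taking $\Ho{0}$ then yields the two squares of the diagram. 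In the left square, the map $H^0(i_1^!\FF)\to H^0_I(\PP,\jj\FF)$ composed with the forgetful map is the map in the first row, since the forgetful map for a sheaf supported at $1$ is the identity.

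The main obstacle is the compatibility of $var$ with the identification above: one must check that the map $H^0_{(0,1]}\to H^0_{(0,1)}$ coming from the pair triangle is really $var$, and not $var$ up to a sign or a monodromy twist. The same holds for the identification of $\Nea{{1-z}}{\FF}$ with $\Van{{1-z}}{j_{1*}j_1^*\FF}$. I would take this from \cite{Galligo1985} and \cite[Remark 9.5]{KapSch}, as already invoked in Proposition~\ref{phi}. I would then note that for $j_{1*}j_1^*\FF$ the map $var$ is an isomorphism which, under the Galligo identification, is the identity on $H^0_{(0,1)}$. This makes the right square tautological once both vertical maps are written as $\ph{I}$ for the respective sheaves.
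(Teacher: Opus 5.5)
Your proposal is correct and is essentially the paper's argument. The paper's proof only says that the statement follows from the proof of Proposition~\ref{phi}, meaning the naturality of the local-cohomology and excision construction of $\ph{I}$ together with the identification of the Milnor triangle (\ref{var}) with the pair triangle (\ref{complex}); you spell this out, and the compatibility you flag as the main obstacle (that $var$ becomes the restriction $H^0_{(0,1]}\to H^0_{(0,1)}$) is likewise taken there as input from \cite{Galligo1985} and \cite[Remark 9.5]{KapSch}.
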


\begin{proof}
The statement follows immediately from the proof of Proposition \ref{phi}.
\end{proof}

The  statements analogous to two propositions above may be formulated for the $!$-extension as well.

Prove a technical proposition we will need below.
An element $v\in \Van{{1-z}}{\FF} $ gives an extension, which represents the class
$\Ho{1}(\PP,\allowbreak\, \jj \FF[-1])$, given by Proposition \ref{phi}.
Combining this extension with the connecting morphism in the standard triangle
\begin{equation}
\begin{tikzcd}[cramped]
\mathop{i_{0*}i_0^! j_{0!}}\FF\arrow[r]
& \mathop{j_{0!} j_{\infty!} }\FF \arrow[r] 
& \mathop{j_{0*} j_{\infty!} }\FF \arrow[r, "+1"]
& {}
\end{tikzcd}
\label{zero}
\end{equation}
we get an extension representing $\Ho{1}(i_0^! j_{0!}\FF)$.

On the other hand, applying the isomorphism of Proposition \ref{hol} to \var(v) we get an element of $\Nea{{z}}{j_{0!}\FF}$ at point $0$.
With such an element, one may associate an extension  representing $\Ho{1}(i_0^! j_{0!}\FF)$ by means of the Milnor triangle      (\ref{var}).

\begin{prop}
Two extensions  introduced above, representing class in 
$\Ho{1}(i_0^! j_{0!}\FF)$, coincide.
\label{frame}
\end{prop}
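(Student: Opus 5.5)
The plan is to reduce both extensions to explicit local cohomology classes supported on the interval $I$. Then I would compare them by a direct topological computation near the endpoint $0$.

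First I would make the class in $\Ho{1}(\PP,\, \jj\FF[-1])$ attached to $v$ explicit. By the proof of Proposition~\ref{phi}, $v$ corresponds to a class in $H^0_{(0,1]}(\Gm,\FF)$, pushed forward to $H^0_I(\PP,\, \jj\FF)$. Composing with the connecting morphism of (\ref{zero}) only depends on the germ of this class near $0$. The connecting map $H^0(\PP,\,\jj\FF)\to \Ho{1}(i_0^! j_{0!}\FF)$ factors through the restriction to a small disc $D$ around $0$. There the class restricts to a section of $\FF$ (a shifted local system) over $(0,\epsilon)$ with support condition along the segment $(0,\epsilon)\subset D$.

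Second, I would identify that restricted germ. By Proposition~\ref{nea2}, the image of $v$ in $\Ho{0}(\PP,\, j_{\infty!}j_{0*}j_{1*}j_1^*\FF)$ is $\ph{I}(\var(v))$. By Proposition~\ref{hol} and the subsequent remark, this is the flat section of $\LL=j_1^*\FF[-1]$ on $(0,1)$ whose limit at $0$ is the holonomy image of $\var(v)$ in $\Nea{z}{j_{0!}\FF}$. Since both extensions factor through $D\setminus\{0\}$, where $j_1^*$ changes nothing, the germ at $0$ of the first extension is determined by the class of this flat section on $(0,\epsilon)$. So both extensions are functions of the same nearby-cycle element $w\in\Nea{z}{j_{0!}\FF}$, and the task becomes a local statement on $D$.

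Third, I would prove that local statement: for a local system $\LL$ on $D^*$ and $w\in\Nea{z}{\LL}$, two extensions in $\Ho{1}(i_0^!j_{0!}\LL[1])$ agree. One is the connecting map of $i_{0*}i_0^!\to j_{0!}\to j_{0*}$ applied to the class in $H^0_{(0,\epsilon]}$ given by the section $w$ on the ray. The other comes from the Milnor triangle (\ref{var}), using $\Van{z}{j_{0!}\LL[1]}\cong$ cokernel data and $\var$. I would use the Galligo-type description already invoked in the proof of Proposition~\ref{phi}: vanishing cycles at $0$ are local cohomology with support in the ray $[0,\epsilon)$, and the triangle (\ref{complex}) for the pair $(0,\epsilon)\subset[0,\epsilon)$ is the Milnor triangle (\ref{var}). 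Under this identification, the connecting morphism of (\ref{zero}), restricted to classes supported on the ray, is the boundary map of (\ref{complex}). That boundary map is exactly the one in (\ref{var}). This gives the equality.

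The main obstacle is this last compatibility. The connecting morphism of the global triangle (\ref{zero}) must match the boundary of (\ref{complex}) under the Galligo isomorphism, including sign and orientation conventions; this is where the direction in which $I$ enters $0$ matters. I would check it on the rank-one case $\LL=\kf$, computing both sides by a \v{C}ech cover of $D$ by a sector around the ray and its complement. Naturality in $\LL$ then extends it to all local systems, since both constructions are functorial and unipotent local systems are built from extensions of the trivial one. For general $\LL$ the sector computation works verbatim.
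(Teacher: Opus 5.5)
Your proposal is correct and is essentially the paper's argument made explicit. The paper only says the claim follows from the standard definition of nearby cycles and the Milnor triangle, and your three steps are exactly that: localizing at $0$, identifying the germ of $\ph{I}(v)$ with the ray section representing $\Hol{I}(\var(v))$, and using the Galligo identification of (\ref{var}) with the ray triangle analogous to (\ref{complex}). The compatibility you flag as the main obstacle is in fact formal: the ray triangle at $0$ is obtained by applying $R\Gamma_{[0,\epsilon)}$ to the restriction of (\ref{zero}) to a small disc and maps to it by forgetting supports, while orientation conventions are already built into the Galligo identification, so the \v{C}ech check and the reduction to $\LL=\kf$ are unnecessary.
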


\begin{proof}
The statement follows immediately from the standard definition of nearby cycles
and the Milnor triangle.
\end{proof}

\begin{rem}
Verdier duality gives isomorphisms between $\Ho{0}(\PP,\, \mathop{j^a_!j^b_* j^c_*}\LL[1])$ and $\Ho{0}(\PP,\, \mathop{j^a_*j^b_! j^c_!}\LL[1])$
for $\{a,b,c\}=\{0, \infty, 1\}$. Combining it with  natural maps
from $!$-extensions to $*$-extensions as in the proposition above,
one obtains the action on the fiber of $\LL$ of monodromies at all infinite 
points, that is the action of the fundamental group of $\Gm\setminus\{1\}$.
\end{rem}

\section{Multiplicative convolution}
\label{MConv}

\subsection{Moduli spaces}
\label{moduli}

Denote by $\M{n}$ the moduli space of embeddings of an $n$-element set $\set{n}\hookrightarrow \mathbb{P}^1$, considered up to the action of the Möbius group. This is a smooth affine variety admitting a smooth projective compactification $\Ms{n}$, the moduli space of stable curves. The complement $\Ms{n}\setminus\M{n}$ is a union of normal crossing divisors, indexed by partitions of $\set{n}$ into two subsets of cardinality at least $2$.
Denote by $p_i\colon \M{n} \to \M{n-1}$ and $p_i\colon \Ms{n} \to \Ms{n-1}$ the forgetful maps that omit the $i$-th point.

Choose coordinates on $\PP$ so that the first three elements of $\set{n}$ are placed at $0$, $\infty$, and $1$, respectively. The coordinates of the remaining points are called \emph{simplicial coordinates} on $\M{n}$.
The coordinates of the remaining points define simplicial coordinates on $\M{n}$. Thus, a point of $\M{n+3}$ with simplicial coordinates $(t_1,\dots,t_n)$ is given by $(0,\infty,1,t_1,\dots,t_n)$.

In these coordinates, $\M{4}$ is identified with $\PP\setminus{0,\infty,1}$, and $\M{5}$ with $\M{4}\times\M{4}\setminus \Delta$, where $\Delta$ is the diagonal.
The projections from $\M{5}$ to $\M{4}$ look
in simplicial coordinates as follows:
\begin{equation}
\begin{aligned}
p_3(0, \infty, 1, t_1, t_2)&=(0, \infty, 1, t_2/t_1)\\
p_4(0, \infty, 1, t_1, t_2)&=(0, \infty, 1, t_2)\\
p_5(0, \infty, 1, t_1, t_2)&=(0, \infty, 1, t_1)
\end{aligned}
\label{e5}
\end{equation}

Space $\Ms{4}$ is the obvious compactification of $\M{4}$, 
which is $\PP$. $\Ms{5}$ is isomorphic to 
$\Ms{4}\times\Ms{4}$ with blown-up points $(0,0)$, $(1,1)$ and $(\infty, \infty)$.

Denote by $\Ms{5}'$ the space obtained from $\Ms{4}\times\Ms{4}$ by blowing up the points $(0,0)$ and $(\infty,\infty)$. Equivalently, $\Ms{5}'$ is obtained from $\Ms{5}$ by blowing down the divisor corresponding to stable curves of type $(12)(345)$.
Denote by
\begin{equation}
p'_i \colon \Ms{5}' \to \Ms{4} 
\label{mp}
\end{equation}
the forgetful maps, as in (\ref{e5}).

\subsection{Convolution}
\label{Convolution}

Let $m\colon \Gm\times\Gm\to \Gm$ be the multiplication map on the multiplicative group.
The convolution of two perverse sheaves $\EE$ and $\FF$ on $\Gm$ is
a perverse sheaf on $\Gm$
defined by
\begin{equation}
\con{\EE}{\FF} = \Rm m_!(\EE\boxtimes\FF)
\label{box}
\end{equation}
Since the map $m$ is affine, the functor $m_!$ is $t$-exact for the perverse $t$-structure, hence the convolution of perverse sheaves is again perverse (see \cite[4.1]{BBD}).

\begin{rem}
In general, $\mathbf{R}m_!(\EE\boxtimes\FF)$ is a complex of perverse sheaves rather than a single perverse sheaf, which explains the appearance of the truncation $\tau^{\le 0}$ in the definition. This issue can be avoided by working in an appropriate quotient category, as in \cite{DTerICM, DTerPr, DLet} (see Theorem~\ref{Quotient}), since the higher cohomology sheaves are smooth at $1$ (cf. Proposition~\ref{conv2}).
\end{rem}

Let $\imath\colon \PP \to \PP$
be the inversion map $z\mapsto z^{-1}$.
For a sheaf $\FF$ on $\PP$ introduce the notation
\begin{equation*}
\overline{\FF}= \imath^*\FF
\end{equation*}

Recall, that in Section \ref{deligne} we denoted by
$\jj$ the $*$-extension at $0$ and $!$-extension at $\infty$ of a perverse sheaf from 
$\Gm$ to $\PP$. 

\begin{prop}[\cite{DLet, Katz+2012}]
For perverse sheaves $\EE$ and $\FF$ on $\Gm$
we have
\begin{equation}
\jj (\con{\EE}{\FF})=\Rm p'_{3*}({p'_5}^*(\overline{\jj\EE}) \otimes {p'_4}^*(\jj\FF))
\label{ext}
\end{equation}
where $p'_i$ are defined in (\ref{mp}).
\label{conv1}
\end{prop}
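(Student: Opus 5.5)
Restricting to the open part $\Gm\subset\PP$ reduces the identity to a base change. The real content is showing that the chosen blow-ups and extensions on $\Ms{5}'$ reproduce exactly the $*$-extension at $0$ and the $!$-extension at $\infty$.

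\emph{Step 1: identify the open stratum.} In simplicial coordinates $(t_1,t_2)$ on $\M{4}\times\M{4}$ we have $p'_5=t_1$, $p'_4=t_2$ and $p'_3=t_2/t_1$. Change variables to $x=1/t_1$ and $y=t_2$. Over $\Gm\times\Gm$ the map $p'_3$ becomes the multiplication $m(x,y)=xy$, and ${p'_5}^*\overline{\EE}\otimes{p'_4}^*\FF=\EE\boxtimes\FF$. The inversion $\imath$ is exactly what converts $t_1$ into the first factor of $m$.

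\emph{Step 2: the proper pushforward over $\Gm$.} The map $p'_3\colon \Ms{5}'\to\Ms{4}$ is proper, so $p'_{3*}=p'_{3!}$. Over $\Gm\subset\Ms{4}$ the fiber of $p'_3$ is a compactification of the fiber of $m$. It consists of the closure of $\{xy=c\}$, which meets the boundary only at the strict transforms of the points with $x\in\{0,\infty\}$, $y\in\{\infty,0\}$. The blown-up points $(0,0)$ and $(\infty,\infty)$ lie over $p'_3$ undefined, and after the blow-up the exceptional divisors dominate $\Ms{4}$.

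I would compute the stalks of the sheaf $G={p'_5}^*(\overline{\jj\EE})\otimes{p'_4}^*(\jj\FF)$ at these boundary points. At the point $t_1=\infty$, $t_2=\infty$ (that is, $x=0$, $y=\infty$), the sheaf $\overline{\jj\EE}$ is $*$-extended and $\jj\FF$ is $!$-extended. A tensor product with a $!$-extended factor is $!$-extended, so the stalk vanishes; the point $t_1=0$, $t_2=0$ is symmetric, with $\overline{\jj\EE}$ now the $!$-extended factor. Hence, over $\Gm$, $p'_{3*}G$ equals $m_!(\EE\boxtimes\FF)$. Applying $\Rm$ then gives $\con{\EE}{\FF}$ on $\Gm$.

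\emph{Step 3: behaviour at $0$ and $\infty$.} I need $\Rm p'_{3*}G$ to be the $*$-extension at $0$ and the $!$-extension at $\infty$ of its restriction to $\Gm$. This amounts to computing $i_0^!$ and $i_\infty^*$ of $p'_{3*}G$. By proper base change, $i_\infty^*p'_{3*}G$ is the cohomology of $G$ over the fiber ${p'_3}^{-1}(\infty)$. That fiber consists of the strict transforms of $\{t_1=0\}$ and $\{t_2=\infty\}$ together with the exceptional divisor over $(\infty,\infty)$ (or $(0,0)$, depending on orientation).

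On each component, $G$ restricts to a sheaf that is $!$-extended at one end of a $\PP$ and $*$-extended at the other, or is a $!$-extension outright. The Shrinking Proposition~\ref{basic}, or the vanishing $\Ho{*}(\mathbb{A}^1,j_{\infty!}\LL)=0$ used in its proof, should then kill the cohomology. Dually, Verdier duality exchanges $!$ and $*$, and $\mathbb{D}\jj=j_{\infty*}j_{0!}\mathbb{D}$; combined with the symmetry $z\mapsto1/z$ exchanging the roles of $0$ and $\infty$, this gives $i_0^!p'_{3*}G=0$ by the same argument. Finally I would check that truncation commutes with these extensions. On $\Gm$ the higher cohomology sheaves of $m_!$ are local systems, as the paper's remark indicates, and $\jj$ is $t$-exact since $j_0$ and $j_\infty$ are affine. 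Hence $\tau^{\le0}$ of the extended complex is the extension of $\tau^{\le0}$.

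I expect the main obstacle to be the boundary bookkeeping in Step 3. This means correctly identifying the components of ${p'_3}^{-1}(0)$ and ${p'_3}^{-1}(\infty)$ in $\Ms{5}'$, especially the exceptional curves, and checking on each that the mixed $!/*$ structure of $G$ gives acyclicity. It requires care that $(1,1)$ is \emph{not} blown up, so that the point $t_1=t_2=1$ lies over $1\in\Ms{4}$ harmlessly.
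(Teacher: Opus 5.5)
Your Steps 1--2 are fine and match the first half of the paper's proof, and so is the truncation remark, which only needs $t$-exactness of $\jj$. The gap is in Step 3, where the actual content lies, and it has two parts.

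\textbf{The fiber over $\infty$.} You misidentify the fiber and use the wrong vanishing mechanism. The fiber ${p'_3}^{-1}(\infty)$ consists only of the two lines $\{t_1=0\}$ and $\{t_2=\infty\}$, meeting at $(0,\infty)$. The exceptional curves $E_0$ and $E_\infty$ over $(0,0)$ and $(\infty,\infty)$ are sections of $p'_3$ (as you yourself noted, they dominate $\Ms{4}$), not fiber components.

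More seriously, a sheaf on $\PP$ that is $!$-extended at one end and $*$-extended at the other is \emph{not} acyclic when it is singular at $1$. For $\jj\FF$ its cohomology is $\Van{1-z}{\FF}$ by Proposition~\ref{phi}, which is usually nonzero. The vanishing used in the proof of Proposition~\ref{basic} applies only to local systems smooth on all of $\Gm$. The correct reason that $i_\infty^*p'_{3*}G=0$ is simpler: $G$ restricts to zero on the whole fiber. On $\{t_1=0\}$ the factor ${p'_5}^*\overline{\jj\EE}$ has stalk $(\jj\EE)_\infty=0$, and on $\{t_2=\infty\}$ the factor ${p'_4}^*\jj\FF$ has stalk $(\jj\FF)_\infty=0$. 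This is the $\infty$ half of the paper's argument.

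\textbf{The duality shortcut at $0$.} This does not reduce to ``the same argument''. Write $G=\pi^*(\overline{\jj\EE}\boxtimes\jj\FF)$ with $\pi$ the blow-down. Then $G$ is the $!$-extension across $E_0\cup E_\infty$, so $\mathbb{D}G$ is the $*$-extension across them. Indeed, over $\Gm$ it pushes forward to $m_*(\mathbb{D}\EE\boxtimes\mathbb{D}\FF)$, not $m_!$. So $\mathbb{D}G$ is not the $z\mapsto 1/z$ transform of a sheaf of the original shape. The two points ${p'_3}^{-1}(0)\cap E_0$ and ${p'_3}^{-1}(0)\cap E_\infty$ then need a separate argument, which your plan skips.

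The paper avoids duality. It shows that $G$ is the $*$-extension across ${p'_3}^{-1}(0)$, that is, $f^!G=0$ for the fiber inclusion $f$. Base change then gives $i_0^!p'_{3*}G=p'_{3*}f^!G=0$. To check $f^!G=0$ locally:
\begin{enumerate}
\item Away from $E_0\cup E_\infty$, on the lines $\{t_2=0\}$ and $\{t_1=\infty\}$ and at the node $(\infty,0)$, it follows from K\"unneth, because $\overline{\jj\EE}$ is $*$-extended at $t_1=\infty$ and $\jj\FF$ is $*$-extended at $t_2=0$.
\item At the point on $E_0$, use coordinates $(t_1,s)$ with $t_2=st_1$. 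There $G$ is the $!$-extension across $\{t_1=0\}$ of the $*$-extension across $\{s=0\}$ of a local system $L$. You need these two extensions to commute. This follows from $R\Gamma(D_{t_1}\times D^*_s,k_!L)=0$, where $k\colon D^*\times D^*\hookrightarrow D\times D^*$.
\item The point on $E_\infty$ is symmetric.
\end{enumerate}
This same local computation is exactly what your duality route would need at those two points.
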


\begin{proof}
The proof proceeds in two steps. 

First, we construct an isomorphism
\begin{equation}
\con{\EE}{\FF} \cong \Rm p'_{3*}({p'_5}^*(\overline{\jj\EE}) \otimes {p'_4}^*(\jj\FF))
\label{long}
\end{equation}
on $\Gms$.
By (\ref{e5}), on the fibers of $p'_3$ the sheaves
${p'_5}^*(\overline{\jj\EE}) \otimes {p'_4}^*(\jj\FF)$ and $\EE\boxtimes\FF$
are isomorphic away from $0$ and $\infty$. The former is the $!$-extension of the latter across $0$ and $\infty$, hence its $*$-pushforward coincides with the $!$-pushforward of $\EE\boxtimes\FF$.

It remains to show that $\Rm p'_{3*}(\cdots)$ is the $*$-extension at $0$ and the $!$-extension at $\infty$ of $\con{\EE}{\FF}$.
Consider the fiber
of $p'_3$ over $0$. It consists of two intersecting lines,
and ${p'_5}^*(\overline{\jj\EE}) \otimes {p'_4}^*(\jj\FF)$
is $*$-extension of $\EE\boxtimes\FF$ to these lines.
Thus, it is true for the pushforward as well. 
Over $\infty$ the situation is the same.
\end{proof}

\begin{prop}[\cite{DLet, Katz+2012}]
For perverse sheaves $\EE, \FF\in \Shs$, 
\begin{equation}
\Ho{0}(\PP,\, \jj(\con{\EE}{\FF}))=\Ho{0}(\PP,\, \jj\EE)\otimes \Ho{0}(\PP,\,\jj\FF)
\label{otimes}
\end{equation}
\label{conv2}
\end{prop}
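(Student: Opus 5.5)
The plan is to compute the left-hand side with Proposition \ref{conv1}, which writes it as the cohomology of a sheaf on $\Ms{5}'$, and then to compare that sheaf with the external product $\overline{\jj\EE}\boxtimes\jj\FF$ on $\Ms{4}\times\Ms{4}$. Since $p'_3$ is proper, taking $\Ho{0}(\PP,-)$ of (\ref{ext}) gives
\begin{equation*}
\Ho{0}(\PP,\, \jj(\con{\EE}{\FF})) = \Ho{0}\bigl(\PP,\, \Rm p'_{3*}(\mathcal{K})\bigr), \qquad \mathcal{K}={p'_5}^*(\overline{\jj\EE}) \otimes {p'_4}^*(\jj\FF).
\end{equation*}
The first step is to remove the truncation $\tau^{\le 0}$. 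By Proposition \ref{phi}, applied fibrewise together with the exactness argument of Remark \ref{purite}, the sheaves $\jj\EE$ and $\jj\FF$ have cohomology only in degree $0$. I expect the same vanishing for the untruncated $p'_{3*}\mathcal{K}$. The higher perverse cohomology sheaves of $m_!(\EE\boxtimes\FF)$ are smooth on $\Gm$ (see the remark after (\ref{box})), and a shifted smooth local system $\LL[1]$ on $\Gm$ satisfies $\Ho{*}(\PP,\, \jj\LL[1])=0$. The truncated part therefore contributes nothing to global cohomology, and
\begin{equation*}
\Ho{0}(\PP,\, \jj(\con{\EE}{\FF}))=H^0(\Ms{5}',\, \mathcal{K}).
\end{equation*}

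The second step identifies $H^*(\Ms{5}',\mathcal{K})$ with $H^*(\Ms{4}\times\Ms{4},\, \overline{\jj\EE}\boxtimes\jj\FF)$. Let $\beta\colon \Ms{5}'\to\Ms{4}\times\Ms{4}$ be the blow-up at $(0,0)$ and $(\infty,\infty)$. Then $\mathcal{K}=\beta^*(\overline{\jj\EE}\boxtimes\jj\FF)$, because $p'_5$ and $p'_4$ factor through the two projections via $\beta$; here the inversion accounts for $\overline{\phantom{x}}$, since $p_3$ is $t_2/t_1$. As $\beta$ is proper, it suffices to check that $\beta_*\beta^*\mathcal{G}\to\mathcal{G}$ is an isomorphism for $\mathcal{G}=\overline{\jj\EE}\boxtimes\jj\FF$. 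This is an isomorphism away from the two centres. At $(0,0)$, in the original coordinates one factor is $*$-extended at $0$ and the other (after inversion) is $!$-extended there. So the stalk of $\mathcal{G}$ at $(0,0)$ vanishes, and its pullback restricted to the exceptional $\PP$ is extended by $!$ on one side and $*$ on the other. The cohomology of such a sheaf on the exceptional line should vanish by the argument of Proposition \ref{basic}, which is exactly the case $j_{\infty!}j_{0*}$ of a local system on $\Gm$. The point $(\infty,\infty)$ is handled symmetrically.

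The final step applies the Künneth formula:
\begin{equation*}
H^0(\Ms{4}\times\Ms{4},\,\overline{\jj\EE}\boxtimes\jj\FF)=\bigoplus_{a+b=0}\Ho{a}(\PP,\,\overline{\jj\EE})\otimes \Ho{b}(\PP,\,\jj\FF).
\end{equation*}
The inversion $\imath$ is an automorphism of $\PP$, so the first factor is $\Ho{a}(\PP,\jj\EE)$. By Proposition \ref{phi} only $a=b=0$ survives, which gives (\ref{otimes}).

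I expect the main obstacle to be the stalk computation on the exceptional divisors. The delicate point is the orientation of the extensions. One has to check that at $(0,0)$ the factor $\overline{\jj\EE}$ is really $!$-extended, since $\imath$ swaps $0$ and $\infty$. One must also check that, on the exceptional line with the strict transforms of the two axes removed, the induced local system sits in the $j_{\infty!}j_{0*}$ configuration rather than $j_{!}j_{!}$ or $j_*j_*$. The truncation issue in the first step is the other place that needs care, and there I would lean on the smoothness of the higher perverse cohomology sheaves.
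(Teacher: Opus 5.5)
Your proposal is correct and follows the paper's proof. You use the same argument: the higher perverse cohomology of $\R p'_{3*}\mathcal{K}$ is $\jj$ of a shifted local system smooth on $\Gm$, so by Proposition \ref{phi} it contributes nothing, and the remaining total cohomology is computed by K\"unneth; your blow-up comparison only spells out a step the paper leaves implicit. That step is easier than you fear. The restriction of $\beta^*\mathcal{G}$ to an exceptional line is the constant sheaf with stalk $\mathcal{G}_{(0,0)}=0$ (resp.\ $\mathcal{G}_{(\infty,\infty)}=0$), so no analysis of $!$- versus $*$-extensions is needed there; note also that the natural map is $\mathcal{G}\to\beta_*\beta^*\mathcal{G}$, not the reverse.
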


\begin{proof}
Consider the Leray spectral sequence for $\R p'_{3*}(\cdots)$ from \eqref{long}. One checks that the higher direct images are smooth on $\Gm \setminus 1$ and extend as $*$-extensions at $0$ and $!$-extensions at $\infty$. By Proposition~\ref{phi}, only the zeroth row of the spectral sequence is nonzero, hence it degenerates at the $E_2$-page.
By the K\"unneth theorem, the cohomology of the derived pushforward
 equals to the cohomology of the right side of 
(\ref{otimes}). Since the $E_2$-page consists of a single row, the total cohomology is given by the zeroth direct image, hence coincides with $\Ho{0}$ of the pushforward. This yields the desired isomorphism.
\end{proof}

\subsection{Vanishing cycles of the convolution}
\label{category}

Proposition \ref{conv2} above and its proof show that the functor $H^0(\PP,\jj-)$ 
is a tensor functor  on the category 
of perverse sheaves on $\Gm$ with respect to the tensor
structure given by convolution. 
By Proposition \ref{phi}, this functor is isomorphic to 
vanishing cycles at $1$.
This functor is not faithful and therefore is not a fiber functor. This can be remedied by passing to an appropriate quotient of $\Shs$.

The following theorem is proved in \cite{DLet, DTerPr}. We will not need it and
 cite it for completeness.

\begin{theorem}[\cite{DTerICM,DTerPr, DLet}] Consider the subcategory of $\Shs$ formed by perverse sheaves
with nilpotent monodromy and  take its Serre quotient 
by the subcategory of perverse sheaves smooth on $\Gm$.
This is a Tannakian category with the fiber functor
given by vanishing cycles at $1$ and the tensor product given 
by the multiplicative convolution.
\label{Quotient}
\end{theorem}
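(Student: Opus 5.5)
To prove Theorem~\ref{Quotient}, I would first check that the pieces are well defined and only then verify the Tannakian axioms. Let $\mathcal{C}\subset\Shs$ be the subcategory of sheaves with nilpotent monodromy. It is a Serre subcategory, because unipotence of the local monodromies is preserved under subobjects, quotients and extensions. The sheaves smooth on $\Gm$ form a Serre subcategory $\mathcal{S}\subset\mathcal{C}$. Write $\mathcal{C}/\mathcal{S}$ for the Serre quotient.

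\textbf{The fiber functor.} The functor $\Van{{1-z}}{-}$ is exact, since vanishing cycles are t-exact. It kills $\mathcal{S}$, because a sheaf smooth at $1$ has no vanishing cycles there. By the universal property of Serre quotients it therefore descends to an exact functor $\omega$ on $\mathcal{C}/\mathcal{S}$.

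\textbf{$\omega$ is faithful.} An exact functor on an abelian category is faithful as soon as it kills no nonzero object. Suppose $\Van{{1-z}}{\FF}=0$. Then $\FF$ is smooth at $1$: this follows from the Milnor triangles (\ref{can}) and (\ref{var}), since $\FF$ is then locally the perverse sheaf attached to a local system. Hence $\FF\in\mathcal{S}$, so $\FF$ is zero in the quotient.

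\textbf{The tensor structure.} I would equip $\mathcal{C}/\mathcal{S}$ with the product induced by $\con{-}{-}$. Three things need checking.
\begin{enumerate}[label=(\roman*)]
\item Convolution preserves nilpotent monodromy. The monodromies of $\EE\boxtimes\FF$ are unipotent, and the nearby-cycle computation in Proposition~\ref{conv1} shows this survives $\Rm p'_{3*}$.
\item Convolution with a sheaf in $\mathcal{S}$ lands in $\mathcal{S}$. By Proposition~\ref{conv2}, $\Van{{1-z}}{\EE\ast^!\FF}\cong \Van{{1-z}}{\EE}\otimes\Van{{1-z}}{\FF}$ vanishes if one factor is smooth, and a sheaf with zero vanishing cycles at $1$ is smooth there.
\item Convolution is exact in the quotient. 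The defect of exactness of $\tau^{\le0}m_!$ comes from higher perverse cohomology of $m_!(\EE\boxtimes\FF)$. By the Leray argument in the proof of Proposition~\ref{conv2}, these higher terms are smooth on $\Gm$, so they vanish in the quotient.
\end{enumerate}

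\textbf{Associativity, commutativity and unit.} Associativity and commutativity constraints come from those of $m_!$ on $\Gm^3$ and $\Gm^2$. Here I would use that the truncations are harmless in the quotient, again because their discrepancies lie in $\mathcal{S}$. The unit object is $\delta_1$, the skyscraper at $1$: we have $\con{\delta_1}{\FF}=\FF$, and $\omega(\delta_1)=\kf$.

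\textbf{Monoidality of $\omega$.} Proposition~\ref{conv2} combined with Proposition~\ref{phi} gives isomorphisms
\[
\omega(\con{\EE}{\FF})\cong\omega(\EE)\otimes\omega(\FF).
\]

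\textbf{Main obstacle.} The hardest step is showing that these isomorphisms are compatible with the associativity and, above all, the commutativity constraints. The issue is that the symmetry of Künneth for the swap map must not acquire a sign, given the degree shift $[1]$ of the perverse sheaves. I would try to settle this by computing on the generator $\jj\kf[1]$-type objects, or by reducing to rank-one checks via Proposition~\ref{phi}, where $\Ho{0}(\PP,\jj\FF)$ is explicit.

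\textbf{Rigidity.} Finally I would establish rigidity. The natural candidate dual is $\imath^*\mathbb{D}\FF$, whose vanishing cycles are dual to those of $\FF$. Once rigidity is in place, Deligne's criterion (a $\kf$-linear abelian rigid tensor category with an exact faithful tensor functor to vector spaces, and $\mathrm{End}(\mathbf{1})=\kf$) gives that $\mathcal{C}/\mathcal{S}$ is Tannakian with fiber functor $\omega$, which is the assertion of Theorem~\ref{Quotient}.
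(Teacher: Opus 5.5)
The paper does not prove this theorem; it only quotes it from Deligne--Terasoma, whose route (recalled in the remark after Theorem~\ref{Hcoproduct}) identifies the quotient with $W$-modules. So your outline has to stand on its own, and its first half does. The Serre subcategories, the descent and faithfulness of $\omega$, and the descent and exactness of convolution modulo $\mathcal{S}$ are all correct. In particular, the Leray/K\"unneth argument of Proposition~\ref{conv2} does give $\Van{1}{{}^pH^k m_!(\EE\boxtimes\FF)}=0$ for $k\neq 0$. For (i) you cite the wrong result, but d\'evissage to $\delta_1$ and $\kf_{\Gm}[1]$ works.

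\textbf{First gap: the constraints.} The step you call the main obstacle is left open, and the way out you propose does not work. In the unipotent setting, the only simple object of $\mathcal{C}/\mathcal{S}$, and the only rank-one object, is $\mathbf{1}=\delta_1$. On it the check is vacuous, since $c_{\mathbf 1,\mathbf 1}=\mathrm{id}$ in any symmetric monoidal category. Natural isomorphisms between exact functors on a non-semisimple category are not determined by their values on simple objects, so rank-one checks say nothing about extensions of $\mathbf 1$ by $\mathbf 1$. The issue is instead settled by bookkeeping:
\begin{enumerate}
\item define the commutativity constraint through the Koszul-signed symmetry of $\boxtimes$;
\item note that the swap $(s,t)\mapsto(t,s)$ commutes with $m$;
\item note that every identification you use (K\"unneth, the Leray degeneration in Proposition~\ref{conv2}, Proposition~\ref{phi}) is natural and obeys the Koszul rule;
\item since $\Ho{*}(\PP,\jj\EE)$ is concentrated in degree $0$, the graded swap on $\Ho{0}\otimes\Ho{0}$ is the plain swap, so no sign appears.
\end{enumerate}
Associativity is handled the same way, using the triple product over $\Gm^3$.

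\textbf{Second, more serious gap: rigidity.} Rigidity is only asserted. Naming $\imath^*\mathbb{D}\FF$ as a candidate dual is not a proof. You would have to construct evaluation and coevaluation morphisms in the quotient through the truncated convolution and verify the zig-zag identities, and none of this is sketched. Your closing appeal to ``Deligne's criterion'' is just the definition of a neutral Tannakian category, so without rigidity the argument does not conclude.

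Once $\omega$ is a tensor functor, there is a short fix. By Deligne--Milne (\emph{Tannakian categories}, Prop.~1.20), an abelian $\kf$-linear tensor category with $\mathrm{End}(\mathbf 1)=\kf$ and an exact faithful tensor functor to finite-dimensional vector spaces is rigid as soon as every $X$ with $\dim\omega(X)=1$ is invertible. The hypotheses and the rank-one condition hold here:
\begin{itemize}
\item $\mathrm{End}(\mathbf 1)=\kf$ follows from faithfulness of $\omega$ and $\omega(\mathbf 1)=\kf$.
\item The monodromy on $\Gm\setminus\{1\}$ is unipotent, as in $\Shsu$, so the simple objects of $\mathcal{C}$ are $\delta_1$ and $\kf_{\Gm}[1]$, and the latter lies in $\mathcal{S}$.
\item $\Van{1}{-}$ is exact with $\Van{1}{\delta_1}=\kf$, so $\dim\omega(X)$ equals the length of $X$ in $\mathcal{C}/\mathcal{S}$.
\item Hence a rank-one object is simple in the quotient, therefore isomorphic to $\delta_1=\mathbf 1$, and trivially invertible.
\end{itemize}
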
 

\begin{rem}
The category $\mathrm{Perv}_0$ introduced in \cite[12.6]{Katz+1991} (see also \cite{GMV}) is closely related to the above construction. It provides a splitting of the analogous quotient for the category of perverse sheaves on the affine line with respect to additive convolution.
\end{rem}

\section{Singular fibers}

\subsection{Sheaves without sections supported at $1$}
\label{Without}


We say that a perverse sheaf $\FF \in \Shs$ \emph{has no sections supported at $1$} if
\[
H^0(i_1^! \FF)=0.
\]
Equivalently, $\FF$ has no sections supported at $1$ if the map
\[
\var\colon \Van{1-z}{\FF}\to \Nea{1-z}{\FF}
\]
is injective (see \eqref{canvar}).
Two basic examples are the $*$-extension and the middle extension of a smooth sheaf on $\Gm\setminus\{1\}$.

We say that a perverse sheaf $\FF\in \Shs$  has no 
sections supported at $1$ if $H^0(i_1^! \FF)=0$. Equivalently,   sheaf  has no sections
with support at 1 if the map $\var\colon \Van{1-z}{\FF}\to \Nea{1-z}{\FF}$
induced by (\ref{var}) at point $1$ is injective. Two main examples are the $*$-extension and the middle extension of a smooth sheaf on $\Gm\setminus\{1\}$. 
An important feature of such sheaves is that \eqref{complex} defines a two-term complex representing the vanishing cycles.

\begin{prop}
If one of the perverse sheaves $\EE, \FF \in \Shs$ has no sections supported at $1$, then the convolution $\con{\EE}{\FF}$ has no sections supported at $1$.
\label{support}
\end{prop}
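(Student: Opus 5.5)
The plan is to compute $H^0(i_1^!(\con{\EE}{\FF}))$ directly by base change to the fiber of $m$ over $1$. There the problem reduces to a local computation at the single point $(1,1)$, where the Künneth formula makes the hypothesis on $\EE$ (or $\FF$) kill everything.

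\emph{Step 1: removing the truncation.} Put $K=\R m_!(\EE\boxtimes\FF)$, so that there is a triangle $\tau^{\le 0}K\to K\to \tau^{\ge 1}K\xrightarrow{+1}$ with truncations taken in the perverse $t$-structure. Since $i_1^!$ is left $t$-exact, $i_1^!\tau^{\ge 1}K$ has no cohomology in degrees $\le 0$. Hence $H^0(i_1^!\con{\EE}{\FF})\to H^0(i_1^!K)$ is injective, and it suffices to prove $H^0(i_1^!K)=0$.

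\emph{Step 2: base change to the fiber.} Let $\Gamma=m^{-1}(1)=\{(x,x^{-1})\}\cong\Gm$, with $i\colon\Gamma\to\Gm\times\Gm$ the inclusion. Proper base change for $!$-functors gives $i_1^!\R m_!=\R\Gamma_c(\Gamma,-)\circ i^!$. Therefore $H^0(i_1^!K)=H^0_c(\Gamma,G)$ with $G=i^!(\EE\boxtimes\FF)$.

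\emph{Step 3: local analysis on $\Gamma$.} The sheaf $\EE\boxtimes\FF$ is smooth outside $(\{1\}\times\Gm)\cup(\Gm\times\{1\})$, and this set meets $\Gamma$ only at $p=(1,1)$. On $U=\Gamma\setminus\{p\}$ the sheaf $\EE\boxtimes\FF$ is a local system placed in degree $-2$, and $\Gamma$ has real codimension $2$. So $G|_U=\LL$ is a local system in degree $0$. Let $j\colon U\to\Gamma$ and $i_p\colon p\to\Gamma$ be the inclusions, and use the triangle $i_{p*}i_p^!G\to G\to j_*\LL\xrightarrow{+1}$. Taking compactly supported cohomology yields an exact sequence
\[
H^0(i_p^!G)\to H^0_c(\Gamma,G)\to H^0_c(\Gamma,j_*\LL).
\]
The last term is the space of compactly supported global sections of the sheaf $j_*\LL$. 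This space vanishes: such a section is zero near $0$ and $\infty$, and $U$ is connected, so by analytic continuation it is zero on $U$, hence zero as a section of $j_*\LL$. Consequently $H^0_c(\Gamma,G)$ is a quotient of $H^0(i_p^!G)$.

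\emph{Step 4: Künneth at the point.} We have $i_p^!G=i_{(1,1)}^!(\EE\boxtimes\FF)=i_1^!\EE\otimes i_1^!\FF$ by the Künneth formula for $!$-restriction. For a perverse sheaf on a curve, $i_1^!$ is concentrated in degrees $[0,1]$. So in degree $0$ only one term survives:
\[
H^0(i_p^!G)=H^0(i_1^!\EE)\otimes H^0(i_1^!\FF),
\]
and this is zero as soon as one factor vanishes. Combining Steps 1–4 gives $H^0(i_1^!\con{\EE}{\FF})=0$.

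The step needing the most care is Step 3, that is, correctly identifying $G$ away from $p$ together with the degree bookkeeping in the base-change isomorphism. The key point is that the singular locus of $\EE\boxtimes\FF$ meets $\Gamma$ only at $(1,1)$, because $x=1$ forces $x^{-1}=1$. This is what makes the argument purely local at one point.
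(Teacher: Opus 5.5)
Steps 1, 3 and 4 are correct. Your overall architecture matches the paper's: restrict to the fiber over $1$, split off the point $(1,1)$, apply K\"unneth there, and show that the rest contributes nothing in degree $0$.

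The gap is in Step 2. There is no general base change isomorphism $g^!f_!\cong f'_!g'^!$ for a non-proper $f$. The valid formulas are $g^*f_!\cong f'_!g'^*$ and $g^!f_*\cong f'_*g'^!$. For a counterexample, take $f=j\colon\mathbb{A}^1\setminus\{0\}\hookrightarrow\mathbb{A}^1$ and $g=i_0$. The fiber product is empty, yet $i_0^!j_!\kf\cong\kf[-1]\oplus\kf[-2]\neq 0$. Since $m$ is not proper, ``proper base change'' does not give $i_1^!\R m_!=\R\Gamma_c(\Gamma,-)\circ i^!$. A formula of this kind fails whenever singularities of the sheaf run off to infinity in the fibers as one approaches the special fiber.

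In your situation the formula does hold, but it needs the following argument.
\begin{itemize}
\item In coordinates $(x,w)=(x,xy)$, $m$ is the projection to $w$. Over a small disc $D\ni 1$, the singular locus $\{x=1\}\cup\{x=w\}$ of $\EE\boxtimes\FF$ stays in a compact part of the $x$-line.
\item Hence $m_!=\bar m_*\bar j_!$, where $\bar j\colon\Gm\times D\hookrightarrow\PP\times D$ and $\bar m$ is proper. The valid base change then gives $i_1^!\bar m_*\bar j_!=\R\Gamma(\PP,\bar i^!\bar j_!(-))$, with $\bar i\colon\PP\times\{1\}\hookrightarrow\PP\times D$.
\item Near $x=0$ and $x=\infty$ the sheaf is pulled back from the $x$-factor of a local system on a punctured disc. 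By K\"unneth, the natural map $\bar j_{1!}i^!\to\bar i^!\bar j_!$ is therefore an isomorphism, where $\bar j_1\colon\Gamma\hookrightarrow\PP\times\{1\}$.
\item This recovers $\R\Gamma_c(\Gamma,i^!(-))$, as you claimed.
\end{itemize}

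This is exactly what the paper's passage to $\Ms{5}'$ and the proper map $p'_3$ (Proposition~\ref{conv1}) provides. There the base change to the compact fiber $p^{\prime -1}_3(1)\cong\PP$ is legitimate. The paper then kills the degree-$0$ contribution of the complement of the point via Proposition~\ref{phi}, whereas you use connectedness of $U$; either works.

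With Step 2 repaired this way, your proof is complete. Your Step 1 (injectivity via left $t$-exactness of $i_1^!$) also makes explicit the handling of the truncation $\tau^{\le 0}$, which the paper passes over silently.
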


\begin{proof}
We compute $i_1^!(\con{\EE}{\FF})$ using base change for the map $p'_3$.
Let $f\colon p^{\prime -1}_3(1)\to \Ms{5}'$
be the embedding of the fiber. We need to calculate
$$\Ho{0}(p^{\prime -1}_3(1), f^!({p'_5}^*(\overline{\jj\EE}) \otimes {p'_4}^*(\jj\FF))$$.

The fiber $p^{\prime -1}_3(1)$  is the projective line.
Denote by $s$ the embedding of point $(0, \infty, 1,1,1)$ in this line,
and by $t$ the embedding of the complement. 
Denote ${p'_5}^*(\overline{\jj\EE}) \otimes {p'_4}^*(\jj\FF)$ by $\mathcal{A}$ and consider the standard triangle
\begin{equation*}
\begin{tikzcd}[cramped]
s_*s^!f^!{\mathcal{A}}\arrow[r]
& f^!{\mathcal{A}}\arrow[r] 
& t_*t^*f^!{\mathcal{A}}\arrow[r, "+1"]
& {}
\end{tikzcd}
\end{equation*}
Taking cohomology, we get
\begin{equation}
\begin{tikzcd}[cramped]
\Ho{0}(s^!f^!{\mathcal{A}})\arrow[r]
& \Ho{0}(\PP,\, f^!{\mathcal{A}})\arrow[r] 
& \Ho{0}(\PP,\, t_*t^*f^!{\mathcal{A}})\arrow[r, "+1"]
& {}
\end{tikzcd}
\label{hhh}
\end{equation}
where $\PP$ is $p^{\prime -1}_3(1)$. The first term is equal
to $\Ho{0}(i_1^!\EE)\otimes \Ho{0}(i_1^!\FF)$ by the K\"unneth theorem and 
vanishes by the condition of the proposition. One may see that
$t_*t^*f^!{\mathcal{A}}$ is isomorphic to 
$\mathop{j_{\infty!}j_{0*} j_{1*}j_1^*}(\overline{\EE}\otimes\FF)[-2]$,
where $j_1$ is $\Gm\setminus\{1\}\hookrightarrow\Gm$. 
One checks that 
The $i$-th cohomology of this sheaf vanishes for $i \ne 1$, by applying Proposition~\ref{phi} to $\mathop{j_{1*}j_1^*}(\overline{\EE}\otimes\FF)[-1]$.
It follows that the third term in \eqref{hhh} vanishes, and hence the middle term vanishes as well.
\end{proof}

\subsection{Nearby cycles of convolution at 0 via the Verdier specialization}
\label{gluing}

For a perverse sheaf in $\Shs$ without sections supported at $1$, the vanishing cycles at $1$ can be viewed as a subspace of the nearby cycles at $1$. 
For the convolution of such sheaves, we construct a natural embedding
\[
\Van{{1-z}}{\EE}\otimes\Van{{1-z}}{\FF} \longrightarrow \Nea{{z}}{\con{\EE}{\FF}}
\]
using the geometry of the special fiber at $0$.

Consider the fiber of $p_3'$ (equivalently, $p_3$) over $0$. 
It is a nodal stable curve consisting of two components.
The convolution is the shifted pushforward under $p_3'$ of the sheaf
\[
{p'_5}^*(\overline{\jj\EE}) \otimes {p'_4}^*(\jj\FF).
\]
To compute nearby cycles of the convolution, we first compute
\[
\nea{z}{{p'_5}^*(\overline{\jj\EE}) \otimes {p'_4}^*(\jj\FF)}
\]
on the singular fiber, and then take the pushforward.

Consider the diagram
\begin{equation}
s\!:\, * \to p_3^{-1}( 0) \leftarrow \mathbb{A}^1 \cup \mathbb{A}^1 \, :\! t_{1,2}
\label{sing}
\end{equation}
where $s$ be the embedding of the singular point, corresponding to the stable curve of type $(15)(3)(24)$, and $t_i$ be the embedding
of the $i$th complement.
The first component, corresponding to stables curves
of types $(15)(234)$ and the second one --- to $(135)(24)$. 

Introduce  coordinates on the compactification of the first and the second component as follows (see the left part of the picture at the end of Section \ref{poly}):
\begin{equation}
(0, \infty, 1)= ((15)(4)(23), (15)(3)(24) , (15)(2)(34))
\label{left1}
\end{equation}
and:
\begin{equation}
(0, \infty, 1)= ( (15)(3)(24), (13)(5)(24),  (35)(1)(24))
\label{left2}
\end{equation}

Denote by $\EE_0$ (resp. $\FF_0$) the shifted vector bundle on $\Gm$ whose fiber at $a$ is $\Nea{az}{\EE}[1]$ (resp. $\Nea{az}{\FF}[1]$). 
These are identified with the restrictions to $\Gm$ of the Verdier specializations \cite{Ver83} of $\EE$ and $\FF$ along $0$, after identifying the tangent space to $\PP$ at $0$ with $\mathbb{A}^1 = \PP\setminus\{\infty\}$.

Denote ${p'_5}^*(\overline{\jj\EE}) \otimes {p'_4}^*(\jj\FF)$ by $\mathcal{A}$.

One may see that on the first component in coordinates as above ${p'_5}^*(\overline{\jj\EE})$ is 
isomorphic to the restriction of $\overline{\jj\EE}$ and nearby cycles of 
$ {p'_4}^*(\jj\FF)$ along $z$ is isomorphic to $j_{\infty!}\FF_0$.
Since nearby cycles commute with tensor products, we obtain, 
\begin{equation}
t_{1*}t^*_1\nea{z}{{p'_5}^*(\overline{\jj\EE}) \otimes {p'_4}^*(\jj\FF)}=
\overline{\jj(\EE\otimes \FF_0)}
\label{near1}
\end{equation}
and, analogously,
\begin{equation}
t_{2*}t^*_2\nea{z}{{p'_5}^*(\overline{\jj\EE}) \otimes {p'_4}^*(\jj\FF)}=
\jj(\EE_0\otimes \FF)
\label{near2}
\end{equation}
where we identify $\PP$, where right-hand sides of 
(\ref{near1}) and (\ref{near2}) are defined, with closures of our components.

Consider   the standard triangle associated with  stratification
(\ref{sing}):
\begin{equation*}
\begin{tikzcd}[cramped]
s_*s^!\nea{z}{\mathcal{A}}\arrow[r]
& \nea{z}{\mathcal{A}} \arrow[r] 
& t_*t^*\nea{z}{\mathcal{A}} \arrow[r, "+1"]
& {}
\end{tikzcd}
\end{equation*}
Thus, $\nea{z}{\mathcal{A}}$ may be presented as
\begin{equation}
\mathop{cone}(\,
\begin{tikzcd}[cramped, sep=small]
 t_*t^*\nea{z}{\mathcal{A}} \arrow[r]
& s_*s^!\nea{z}{\mathcal{A}}[1]\end{tikzcd}
\,),
\label{cone}
\end{equation}
where
\begin{equation}
 t_*t^*\nea{z}{\mathcal{A}}= t_{1*}t_1^*\nea{z}{\mathcal{A}}\,\oplus\, t_{2*}t_2^*\nea{z}{\mathcal{A}}
\label{sum}
\end{equation}

To construct a map
$
\Van{{1-z}}{\EE}\otimes\Van{{1-z}}{\FF} \to \Nea{{z}}{\con{\EE}{\FF}},
$
we construct compatible extensions on the two components and glue them via the cone description \eqref{cone}. Concretely, we build classes in
$
\Ho{-1}(t_{1*}t_1^*\nea{z}{\mathcal{A}})$ and 
$\Ho{-1}(t_{2*}t_2^*\nea{z}{\mathcal{A}})$
whose images in $s_*s^!\nea{z}{\mathcal{A}}[1]$ coincide.

Combining (\ref{near1}) and (\ref{near2}) with Proposition \ref{phi},
we get
\begin{equation}
\begin{gathered}
\Ho{-1}(\PP,\, t_{1*}t^*_1\nea{z}{\mathcal{A}})=\Van{1-z}{\EE\otimes \FF_0}=\Van{1-z}{\EE}\otimes\Nea{z}{ \FF}\\
\Ho{-1}(\PP,\, t_{2*}t^*_2\nea{z}{\mathcal{A}}) =\Van{1-z}{\EE_0\otimes \FF}=\Nea{z}{\EE}\otimes\Van{1-z}{ \FF}
\end{gathered}
\label{gath}
\end{equation}
where the second equalities are due to the fact that
$\EE_0$ and $\FF_0$ are smooth at $1$ and the corresponding nearby
cycles are their shifted fibers there.

Given $u \otimes v \in \Van{{1-z}}{\EE}\otimes\Van{{1-z}}{\FF}$ ,
using isomorphisms (\ref{gath}) we get elements
\begin{equation}
\begin{gathered}
v\otimes (\Hol{I} (\var(u)))\in \Van{1-z}{\EE}\otimes\Nea{z}{ \FF}=\Ho{-1}(\PP,\, t_{1*}t^*_1\nea{z}{\mathcal{A}})\\
(\Hol{I} (\var(v)))\otimes u \in\Nea{z}{\EE}\otimes\Van{1-z}{ \FF}=\Ho{-1}(\PP,\, t_{2*}t^*_2\nea{z}{\mathcal{A}})
\end{gathered}
\label{gath2}
\end{equation}
where isomorphisms $\Hol{I}\colon \Nea{1-z}{-}\to \Nea{z}{-}$ are as in Proposition \ref{hol}.
Taking the difference between the element given by the first and second lines of
(\ref{gath2}) and taking into account (\ref{sum}) we get an element of
$\Ho{-1}(t_*t^*\nea{z}{\mathcal{A}})$.

To fulfill the construction, we need to check that projections on $s_*s^!\nea{z}{\mathcal{A}}[1]$ in (\ref{cone})
of extensions  given by the first and second lines of
(\ref{gath2}) coincide. To do it, one need to notice,
that these projections are given by the second arrow of the triangle (\ref{zero})
and apply Proposition \ref{frame}.

%
%
%
%

\begin{prop}
The map from $\Van{{1-z}}{\EE}\otimes\Van{{1-z}}{\FF}$
to $\Nea{{z}}{\con{\EE}{\FF}}$ defined above is the composition of maps
\begin{equation}
\begin{tikzcd}[scale=05, cramped, sep=small]
\Van{}{\EE} \otimes \Van{}{\FF}\arrow[r, "\ph{I}\otimes\ph{I}\!\!", "\sim"'] 
& \Ho{0}(\PP,\, \jj \FF)\otimes\Ho{0}(\PP,\, \jj \EE) \arrow[r,"\mbox{\tiny Prop.\ref{conv2}}",  "\sim"'] 
& \Ho{0}(\PP,\, \jj( \con{\EE}{\FF})) \arrow[d]\\
\Nea{{z}}{\con{\EE}{\FF}}
&\Van{{z}}{j_{1*}j_1^*(\con{\EE}{\FF})} \arrow[l, "\sim", "var_0"']
&\hspace{-.7cm}\Ho{0}(\PP,\, \mathop{j_{1*}j_1^*j_{\infty!}j_{0*} }(\con{\EE}{\FF}))
 \arrow[l, start anchor={[shift={(-17pt,0pt)}]}, 
      end anchor={[shift={(0pt,0pt)}]} , "\ph{I}^{-1}"', "\sim"]
\end{tikzcd}
\label{map1}
\end{equation}
where all arrows, except the vertical one, are isomorphisms, and the vertical 
arrow is the canonical map.
\label{Map1}
\end{prop}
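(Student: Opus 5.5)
The plan is to compute both maps on a single geometric object: the nearby cycles $\nea{z}{\mathcal{A}}$ on the fiber $p_3'^{-1}(0)$. On one side, the isomorphism of Proposition \ref{conv2} comes from the Leray spectral sequence for $p'_3$ and K\"unneth. The image of $\ph{I}(u)\otimes\ph{I}(v)$ in $\Ho{0}(\PP,\jj(\con{\EE}{\FF}))$ is therefore represented by the external product class on $\Ms{5}'$ of the two relative-cohomology classes supported on $I$. This class is supported on $p_5'^{-1}(I)\cap p_4'^{-1}(I)$, which is the square $I\times I$ blown up at $(0,0)$. The vertical arrow followed by $\ph{I}^{-1}$ and $var_0$ amounts to the following procedure: take the restriction of this class to $p_3'^{-1}((0,\epsilon))$ and read off its limit at $0$ as an element of nearby cycles. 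This is the content of Propositions \ref{hol} and \ref{nea2} applied to $j_{1*}j_1^*(\con{\EE}{\FF})$. By proper base change, nearby cycles of $p'_{3*}\mathcal{A}$ at $0$ are the cohomology of the fiber with coefficients in $\nea{z}{\mathcal{A}}$. So the composition sends $u\otimes v$ to the specialization of the class supported on $I\times I$ to the special fiber.

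First, I will identify this specialization with the cone description \eqref{cone}. The square $I\times I$ meets the fiber over $0$ in the union of the two edges $\{0\}\times I$ and $I\times\{0\}$ of the blow-up, one on each component. Restricting the specialized class to the open parts $t_1$ and $t_2$ gives classes in $\Ho{-1}(t_{i*}t_i^*\nea{z}{\mathcal{A}})$. By \eqref{near1}, \eqref{near2} and Proposition \ref{phi}, I will check that these are exactly the elements of \eqref{gath2}. On the first component the $\EE$-factor is unchanged and still carries the class $\ph{I}(u)$. The $\FF$-factor degenerates to $\FF_0$, and its class supported on $I$ specializes to its nearby cycle at $0$. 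By Proposition \ref{hol} this nearby cycle is $\Hol{I}(\var(v))$. The second component is symmetric.

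Next I will fix the sign. The two edges meet at the node with opposite boundary orientations coming from $\partial(I\times I)$, and this produces the difference in the construction. Finally, the long exact sequence of \eqref{cone} determines a class in $\Ho{-1}$ of the fiber from its restrictions to the $t_i$ up to the image of $\Ho{-2}(s^!\nea{z}{\mathcal{A}})$. I expect this group to vanish, by a K\"unneth computation at the node like the one in Proposition \ref{support}. Hence the two constructions agree.

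The main obstacle is the specialization step: showing rigorously that the nearby cycles of the product class supported on the (blown-up) square restrict on each component to $\ph{I}(u)\otimes(\text{limit of }v)$, compatibly with the tensor identification $\nea{z}{\mathcal{A}}=\nea{z}{p_5'^*}\otimes\nea{z}{p_4'^*}$. I would first verify this locally in a product chart near each component, where one factor is constant in $z$. There the statement reduces to Proposition \ref{hol} for one factor. Proposition \ref{frame} then handles the gluing at the node.
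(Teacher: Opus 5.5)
Your strategy is the paper's in spirit: its proof is just ``trace through the constructions''. Identifying the restrictions of the specialized square class to the two components with the elements of \eqref{gath2}, via \eqref{near1}, \eqref{near2} and Proposition~\ref{hol}, is fine. The gap is the uniqueness step at the node.

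From the triangle $s_*s^!\nea{z}{\mathcal{A}}\to\nea{z}{\mathcal{A}}\to t_*t^*\nea{z}{\mathcal{A}}$, the kernel of the restriction $\Nea{z}{\con{\EE}{\FF}}=\Ho{-1}(p_3^{\prime -1}(0),\nea{z}{\mathcal{A}})\to\Ho{-1}(t_*t^*\nea{z}{\mathcal{A}})$ is the image of $\Ho{-1}(s^!\nea{z}{\mathcal{A}})$. This is $\Ho{-2}$ of the term $s_*s^!\nea{z}{\mathcal{A}}[1]$ in \eqref{cone}. The group $\Ho{-2}(s^!\nea{z}{\mathcal{A}})$ that you name does vanish by perversity, but it is the wrong group, and the right one does not vanish. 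Work near the node $(t_1,t_2)=(\infty,0)$ with $a=1/t_1$, $b=t_2$, $z=ab$. There $\mathcal{A}$ is the $*$-extension of $L_{\EE}\boxtimes L_{\FF}$ from $\{ab\neq 0\}$. The Milnor fiber at the node is an annulus, and it retracts onto its part lying over a punctured neighbourhood of the node in either branch. Hence $s^!\nea{z}{\mathcal{A}}\simeq R\Gamma(S^1,\,L_{\EE}\otimes L_{\FF})[1]$, and $\Ho{-1}(s^!\nea{z}{\mathcal{A}})$ is the space of invariants of the monodromy around the vanishing circle on $\Nea{z}{\EE}\otimes\Nea{z}{\FF}$. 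This is nonzero whenever the monodromies at $0$ are unipotent, e.g.\ for constant coefficients. Moreover $\Ho{-2}(t_*t^*\nea{z}{\mathcal{A}})=0$ by \eqref{near1}, \eqref{near2} and Proposition~\ref{phi}, so this space injects into $\Nea{z}{\con{\EE}{\FF}}$. Its elements are classes carried by the vanishing circle of the node, the same kind of loop classes that reappear in Proposition~\ref{Hopent2}, and they are invisible on the components. So equal restrictions determine the class only modulo a nonzero subspace, and ``hence the two constructions agree'' does not follow.

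What is missing is a comparison of the gluing data at the node. The construction preceding the proposition glues explicit extensions on the two components through the identification of extensions in Proposition~\ref{frame}. It is this gluing, not the pair of restricted classes, that singles out an element of $\Nea{z}{\con{\EE}{\FF}}$. You have to show that specializing the square class yields the same gluing at the corner of the square lying over the node. Concretely, you must work with the cocycle supported on the two degenerate edges together with their common corner. Your closing remark that Proposition~\ref{frame} handles the node points in this direction, but as written the argument rests on the false vanishing instead.

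For this you also need the right picture of the support. Because of the inversion in $\overline{\jj\EE}$, the product class on $\Ms{5}'$ is supported on $[1,\infty]\times[0,1]$ in the coordinates $(t_1,t_2)$, which contains neither blown-up point. Its corner over $z=0$ is the node $(\infty,0)$, which is not blown up; blowing it up would add a third component to $p_3^{\prime -1}(0)$. Only the corner $(1,1)$ over $z=1$ is blown up, and only in $\Ms{5}$.
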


\begin{proof}
The statement follows by tracing through the constructions and comparing with Proposition~\ref{phi}.
\end{proof}



\subsection{Nearby and vanishing cycles of convolution at 1  via the Verdier specialization}
\label{blowup}

Analyzing the fibers of the maps $p_3$ and $p_3'$ over $1$, one can identify the vanishing cycles of the convolution with sections of products of Verdier specializations of the corresponding sheaves.
In this subsection, we compute the vanishing cycles at $1$ and their image in nearby cycles for the convolution of perverse sheaves from $\Shs$ without sections supported at $1$.

The fiber $p_3^{-1}(1)$ is a nodal stable curve consisting of two components. Consider the diagram
\begin{equation}
s\!:\, * \to p_3^{-1}(1) \leftarrow \mathbb{A}^1 \cup \mathbb{A}^1 \, :\! t \cup t'.
\end{equation}
Here:
$s$ is the embedding of the singular point corresponding to the stable curve of type $(12)(3)(45)$,
$t$ is the embedding of the smooth locus of the component $(123)(45)$, and
$t'$ is the embedding of the smooth locus of the component $(12)(345)$.

Introduce the coordinates on the compactification of the first and the second component as follows (see the right part of the picture at the end Section \ref{poly}):
\begin{equation}
(0, \infty, 1)= ((13)(2)(45), (23)(1)(45) , (12)(3)(45))
\label{right1}
\end{equation}
and:
\begin{equation}
(0, \infty, 1)= ( (12)(4)(35), (12)(3)(45),  (12)(5)(34))
\label{right2}
\end{equation}

The fiber $p^{\prime-1}_3(1)$ is the projective line. 
The natural projection $p_3^{-1}(1)\to p_3^{\prime-1}(1)$ contracts the component $(12)(345)$ and is an isomorphism on the other component. We keep the notation $s$ and $t$ for the embeddings of the point $(12)(3)(45)$ and its complement in $p_3^{\prime-1}(1)$:

\begin{equation}
s\!:\, * \to p_3^{\prime-1}( 1) \leftarrow \mathbb{A}^1  \, :\! t
\label{sing2}
\end{equation}

Denote $p_5^*(\overline{\jj\EE}) \otimes p_4^*(\jj\FF)$ also by $\mathcal{A}$.

As above, for sheaves in $\Shs$ without sections supported at $1$, we view vanishing cycles at $1$ as a subspace of nearby cycles. Thus, by the Milnor triangle \eqref{var}, to construct an element of $\Van{1-z}{\con{\EE}{\FF}}$, it suffices to construct an element of
 $\Nea{1-z}{\con{\EE}{\FF}}=\Ho{-1}(p_3^{\prime-1}( 1), \nea{1-z}{\mathcal{A}})$,
which vanishes under the connecting morphism of the cohomology
of the Milnor triangle to 
$\Ho{1}(i_1^! (\con{\EE}{\FF}))$.
We  claim that if $\EE$ or $\FF$ has no sections supported at $1$,
for this purpose one can take $v \in \Ho{-1}(p_3^{\prime-1}( 1), \nea{1-z}{\mathcal{A}})$ such that
\begin{equation}
\Ho{-1}(p_3^{\prime-1}( 1),\, t^*\nea{1-z}{\mathcal{A}}) \ni t^*v=0
\label{vanish}
\end{equation}
Indeed, let
$f\colon  p_3^{\prime-1}( 1) \to \Ms{5}'$ be the embedding  and
 consider the triangle associated with stratification (\ref{sing2})
for $f^!\mathcal{A}$:
\begin{equation*}
\begin{tikzcd}[cramped]
s_*s^!f^!{\mathcal{A}}\arrow[r]
& f^!{\mathcal{A}}\arrow[r] 
& t_*t^*f^!{\mathcal{A}}\arrow[r, "+1"]
& {}
\end{tikzcd}
\end{equation*}
Taking cohomology, we obtain:
\begin{equation*}
\begin{tikzcd}[cramped ,sep=small]
\Ho{0}(s^!f^!{\mathcal{A}}) \arrow[r]\arrow[d, equal]
& \Ho{0}(p_3^{\prime-1}( 1), \,f^!{\mathcal{A}}) \arrow[r] 
& \Ho{0}(\mathbb{A}^1,\,t^*f^!{\mathcal{A}}) \arrow[d, equal]\\
\Ho{0}(i_1^!\EE)\otimes \Ho{0}(i_1^!\FF)
& {}
& \Ho{-1}(\mathbb{A}^1,t^*\nea{1-z}{\mathcal{A}})
\end{tikzcd}
\label{tri}
\end{equation*}
Here, the left vertical isomorphism is the K\"unneth theorem
and this term vanishes, because $\EE$ or $\FF$ has no sections supported at $1$. The right isomorphism follows from the fact that
${\mathcal{A}}$ is smooth in the neighborhood
of the image of $t$, hence  $\van{1-z}{\mathcal{A}}$ vanishes there.
Consider the image of $v$ under the connecting isomorphism in the middle term of
(\ref{tri}). Its image under the right arrow vanishes,
because it is equal to $t^*v$, which vanishes by (\ref{vanish}).
The left term is zero, which implies that the image is zero itself.

In order to build an element satisfying condition (\ref{vanish}),
express nearby cycles of $\con{\EE}{\FF}$ in terms of the special
fiber of the projection $p_3$ rather than $p_3'$.
Denote by $\overline{t}\colon \PP \to p_3^{-1}$ the closed embedding of the closure
of the image of $t$. Consider the stratification
\begin{equation}
\overline{t}\!:\, \PP \to p_3^{-1}( 1) \leftarrow \mathbb{A}^1 \, :\!  t'
\end{equation}
and the corresponding triangle for nearby cycles on $p_3^{-1}(1)$
\begin{equation}
\begin{tikzcd}[cramped]
t'_!t^{\prime *}\nea{1-z}{\mathcal{A}}\arrow[r]
& \nea{1-z}{\mathcal{A}} \arrow[r] 
& \overline{t}_*\overline{t}^*\nea{z}{\mathcal{A}} \arrow[r, "+1"]
& {}
\label{tt}
\end{tikzcd}
\end{equation}
For an element of $\Ho{-1}(\PP,\, t'_!t^{\prime *}\nea{1-z}{\mathcal{A}}) $, where $\PP$ denotes the closure of image of
$t'$, consider its image under the first arrow in (\ref{tt}).
Since cohomology of nearby cycles on $p_3^{-1}( 1)$ and $p_3^{\prime-1}( 1)$
coincide and give $\Nea{{1-z}}{\con{\EE}{\FF}}$, the image gives an element of 
$ \Ho{-1}(p_3^{\prime-1}( 1),\, \nea{1-z}{\mathcal{A}})$. We claim that this 
element satisfies condition (\ref{vanish}). Indeed, one may see that
$ t^*\nea{1-z}{\mathcal{A}}$ on both $p_3^{-1}( 1)$ and $p_3^{\prime-1}( 1)$
are canonically isomorphic. But $t^*$ on $p_3^{-1}( 1)$ factors through $\overline{t}^*$,
which annihilates the constructed element due to the exactness of (\ref{tt}).
Thus, an element from  $\Ho{-1}(\PP,\, t'_!t^{\prime *}\nea{1-z}{\mathcal{A}}) $
produces an element in nearby cycles of convolution, which may be lifted
to vanishing cycles.

\begin{prop}
For $\EE$ and $\FF$  from $\Shs$ without section supported at $1$, 
the defined above to  vanishing cycles of $\con{\EE}{\FF}$
lifted from the map to nearby cycles
\begin{equation}
\begin{tikzcd}[cramped]
{}
& \Ho{-1}(\PP,\, t'_!t^{\prime *}\nea{1-z}{p_5^*(\overline{\jj\EE}) \otimes p_4^*(\jj\FF)})  \arrow[d]\arrow[ld, dashed]\\
\Van{{1-z}}{\con{\EE}{\FF}} \arrow[r, "var"]
&\Nea{{1-z}}{\con{\EE}{\FF}}
\end{tikzcd}
\end{equation}
is an embedding.
\label{Ver1}
\end{prop}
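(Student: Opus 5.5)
The dashed arrow is the unique lift through $\var$; since $\Con{\EE}{\FF}$ has no sections supported at $1$ (Proposition~\ref{support}), $\var$ is injective. So the dashed map is injective if and only if the vertical map
\[
\Ho{-1}(\PP,\, t'_!t^{\prime *}\nea{1-z}{\mathcal{A}}) \longrightarrow \Ho{-1}(p_3^{-1}(1),\, \nea{1-z}{\mathcal{A}}) = \Nea{1-z}{\con{\EE}{\FF}}
\]
is injective. (I write $\con{\EE}{\FF}$ for the convolution throughout.) The plan is to prove injectivity of this vertical map using the long exact sequence of the triangle \eqref{tt}. Its kernel is the image of the connecting map
\[
\Ho{-2}(\PP,\, \overline{t}_*\overline{t}^*\nea{1-z}{\mathcal{A}}) \longrightarrow \Ho{-1}(\PP,\, t'_!t^{\prime *}\nea{1-z}{\mathcal{A}}),
\]
so it suffices to show that $\Ho{-2}$ of the restriction of nearby cycles to the component $(123)(45)$ vanishes.

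To compute this restriction, I would proceed as in \eqref{near1}--\eqref{near2}, now using the coordinates \eqref{right1}. On the component $(123)(45)$ the points $4,5$ collide, and $\mathcal{A}$ is a tensor product of pullbacks. The nearby cycles along $1-z$ of this tensor product should be identified with a sheaf of the form $\jj(\overline{\EE}\otimes\FF)$-type twisted by the local data at $1$, or rather with the product of Verdier specializations $\EE_1\otimes\FF_1$ of $\EE$ and $\FF$ at $1$. Here $\EE_1,\FF_1$ are the specializations at $1$, i.e.\ sheaves on the tangent line $\mathbb{A}^1$ at $1$, extended appropriately at $0,\infty$ on the component. This is a single perverse sheaf shifted by $[-1]$ relative to perverse normalisation, matching the shifts in \eqref{gath}.

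Then $\Ho{-2}$ of its cohomology is $\Ho{-1}$ of a perverse sheaf on $\PP$ with the given extension types. That group vanishes by Proposition~\ref{phi} (or by Remark~\ref{purite}: $\Ho{<0}$ vanishes by Artin vanishing and Verdier duality for $!$/$*$ extensions of perverse sheaves on affine opens). Even without a precise identification, the sheaf $\overline{t}^*\nea{1-z}{\mathcal{A}}[-1]$ is perverse up to shift, because nearby cycles $\psi[-1]$ are perverse and restriction to an irreducible component of a curve keeps it in degrees $\geq -1$. The vanishing of $\Ho{-2}$ then follows from the general fact that a perverse sheaf $P$ on a curve has $\Ho{i}(P)=0$ for $i<-1$.

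The main obstacle is the last point. Vanishing in degree $-2$ needs more than perversity: one needs $\Ho{-1}$ of the relevant perverse sheaf to be zero, i.e.\ no constant sub-local-system over the whole component. I would handle this with the no-sections-at-$1$ hypothesis. Via Künneth at the node $s$, the degree $-2$ piece would come from $\Ho{0}(i_1^!\EE)\otimes\Ho{0}(i_1^!\FF)$, or symmetrically from $H^0$ of the specializations at the points $0,\infty$ of the component, which are $!$-type. I would use the $!$-extension at the point $\infty$ of \eqref{right1}, together with Proposition~\ref{basic}, to kill global sections, mirroring the end of the proof of Proposition~\ref{support}.
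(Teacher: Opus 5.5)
Your reduction is sound, and it is more than the paper gives (the paper only says the statement follows from the definitions). The vertical arrow is $\var$ composed with the dashed one, so it suffices that $\Ho{-1}(\PP,\, t'_!t^{\prime *}\nea{1-z}{\mathcal{A}})\to \Ho{-1}(p_3^{-1}(1),\,\nea{1-z}{\mathcal{A}})$ be injective. By \eqref{tt} this follows from $\Ho{-2}(\PP,\,\overline{t}^*\nea{1-z}{\mathcal{A}})=0$.

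That vanishing is the whole content, and your argument for it has a genuine gap.
\begin{itemize}
\item You identify $\overline{t}^*\nea{1-z}{\mathcal{A}}$ with something built from $\EE_1\otimes\FF_1$. But the Verdier specializations live on the other component $(12)(345)$: by \eqref{refl} they describe $t^{\prime *}\nea{1-z}{\mathcal{A}}$, which is the source of the map, not the obstruction term.
\item As you notice, perversity alone gives nothing in degree $-1$ on the compact curve $\PP$ (think of $\kf_{\PP}[1]$). Worse, $\overline{t}^*$ is only right $t$-exact, so restricting to a component through the node can create sections supported at the node. Even perversity of $\overline{t}^*\nea{1-z}{\mathcal{A}}[-1]$ therefore has to be checked.
\item Your repair via the no-sections hypothesis and K\"unneth at the node mixes up two objects. $\Ho{0}(i_1^!\EE)\otimes\Ho{0}(i_1^!\FF)$ computes $s^!f^!\mathcal{A}$ at the point $(0,\infty,1,1,1)$ of $\Ms{5}'$; that is what gives existence of the lift. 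The stalk of $\overline{t}^*\nea{1-z}{\mathcal{A}}$ at the node of $p_3^{-1}(1)\subset\Ms{5}$ is instead the cohomology of a Milnor annulus. It is typically nonzero in degree $-2$ (e.g.\ for $\EE=\FF=j_{1*}\kf[1]$), whatever the hypothesis.
\end{itemize}

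What is missing is the actual computation of the restriction. Identify the component $(123)(45)$ with the diagonal $t_1=t_2=t$.
\begin{itemize}
\item For $t\notin\{0,1,\infty\}$, $\overline{t}^*\nea{1-z}{\mathcal{A}}$ is the local system $M$ of $\overline{\EE}\otimes\FF$, placed in degree $-2$.
\item At $t=0$ and $t=\infty$ (where $3$ meets $2$, resp.\ $1$), the fibre is smooth and the boundary divisors $(23)(145)$, $(13)(245)$ are transversal to it. There $\overline{\jj\EE}$ and $\jj\FF$ have opposite extension types, and $j_*L\otimes j_!L'=j_!(L\otimes L')$, so the restriction is a $!$-extension.
\item At the node $(12)(3)(45)$, use blow-up coordinates $t_1-1=a$, $t_2-1=a(1+b)$, with the two components given by $a=0$ and $b=0$. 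Then $p_3-1=ab\cdot(\text{unit})$ and $\mathcal{A}$ is locally pulled back from the $a$-line. So the Milnor fibre is an annulus in $a$, and the restriction to $b=0$ is $j_*$ across the node.
\end{itemize}
Hence
\[
\overline{t}^*\nea{1-z}{\mathcal{A}}\cong j_{0!}j_{\infty!}j_{1*}M[2],
\]
and therefore $\Ho{-2}(\PP,\,\overline{t}^*\nea{1-z}{\mathcal{A}})=H^0(\PP,\,j_{0!}j_{\infty!}j_{1*}M)=0$. Indeed, a global section is a flat section vanishing near $t=0$, and $j_{1*}M$ has no sections supported at $t=1$. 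Equivalently, this is Artin vanishing of $H_c^{<0}(\Gm,\,j_{1*}M[1])$.

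Your remark about the $!$-extension at $\infty$ is half of this argument; the $*$-behaviour at the node is the missing half. Note also that the hypothesis on sections supported at $1$ plays no role in this injectivity. It enters only through the injectivity of $\var$ and the existence of the lift.
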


\begin{proof}
The statement follows directly from definitions and is left to the reader.
\end{proof}

Now calculate the domain of the map from the  proposition above.

For $\EE, \FF\in\Shs$ denote by
$\EE_1$ and $\FF_1$ the shifted vector bundles on
$\mathbb{A}^1$ which are Verdier specializations (\cite{Ver83})
of $\EE$ and $\FF$ along point $1$ after identification of the tangent space to $\PP$
at $1$ with $\mathbb{A}^1=\PP\setminus\infty$ in the natural way.

One may see that  in coordinates as above
$t^{\prime *}\nea{1-z}{{p'_5}^*(\overline{\jj\EE})}$ is isomorphic to $r^*\EE_1$ and 
$t^{\prime *}\nea{1-z}{ {p'_4}^*(\jj\FF))}$ is isomorphic to $\FF_1$,
where $r$ is defined in (\ref{reflexion}).
Thus, 
\begin{equation}
t^{\prime *}\nea{1-z}{\mathcal{A}}=r^*\EE_1\otimes\FF_1
\label{refl}
\end{equation}

\begin{prop}
For $\EE$ and $\FF$  from $\Shs$ without section supported at $1$, 
\begin{equation}
\Ho{0}(\PP,\, j^\infty_!(r^*\EE_1\otimes\FF_1)[-1])= \Van{{1-z}}{\EE}\otimes\Van{{1-z}}{\FF}
\label{Ver2f}
\end{equation}
\label{Ver2}
\end{prop}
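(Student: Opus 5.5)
We compute the left-hand side of \eqref{Ver2f} by first describing $r^*\EE_1\otimes\FF_1$ on the line $\mathbb{A}^1$ (the tangent space at $1$). The Verdier specialization $\FF_1$ is monodromic: on $\mathbb{A}^1\setminus\{0\}$ it is the shifted local system with fiber $\Nea{1-z}{\FF}$ and monodromy the local monodromy $T_1$ of $\FF$ around $1$. At the origin its costalk is $i_1^!\FF$ and its stalk is $i_1^*\FF$, and the triangle \eqref{var} becomes the standard triangle of $\FF_1$ at $0$. In the coordinates \eqref{right2}, the pullback $r^*$ moves the special point of $\EE_1$ to $1$. So $\mathcal{B}=r^*\EE_1\otimes\FF_1$ is a shifted local system on $\PP\setminus\{0,1,\infty\}$. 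Its fiber is $\Nea{1-z}{\EE}\otimes\Nea{1-z}{\FF}$, its monodromy around $1$ is $T^\EE\otimes 1$, around $0$ it is $1\otimes T^\FF$, and around $\infty$ it is the product of the two. At $0$ it is the tensor product of the constant sheaf (coming from $r^*\EE_1$, which is smooth at $0$) with $\FF_1$; at $1$ the roles are swapped.

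The computation then proceeds in three steps.

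\textbf{Step 1: reduce to a single variable.} Compute $\Ho{0}(\PP,\,j^\infty_!\mathcal{B}[-1])$ by the Künneth-type argument already used in Proposition~\ref{conv2}. Write $\mathcal{B}$ as the restriction to the anti-diagonal line $\{x+y=1\}\subset\mathbb{A}^1\times\mathbb{A}^1$ of $\EE_1\boxtimes\FF_1$, using the coordinate $x=1-z$ for the first factor and $y=z$ for the second. By homogeneity of Verdier specializations under the $\Gm$-action, $H^*(\mathbb{A}^1,j^\infty_!\mathcal{B})$ is identified with the cohomology of $\EE_1\boxtimes\FF_1$ on $\mathbb{A}^2$ with compact support in the radial direction, relative to a line $\{x+y=0\}$. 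Alternatively, and more concretely, apply Proposition~\ref{basic} with the interval $I=[0,1]$: since $\mathcal{B}$ is smooth outside $\{0,1\}\subset I$, we get $H^*(\PP,j^\infty_!\mathcal{B})=H^*(I,i_I^!\mathcal{B})$.

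\textbf{Step 2: compute local cohomology along $I$.} Split the interval at an interior point into $[0,\tfrac12]$ and $[\tfrac12,1]$. Near $0$, the factor $r^*\EE_1$ is a constant local system with fiber $\Nea{1-z}{\EE}$. Therefore $H^*_{[0,1/2]}$-type local cohomology of $\mathcal{B}$ near $0$ is $\Nea{1-z}{\EE}\otimes H^*_{[0,\epsilon)}(\FF_1)$. By \cite{Galligo1985}, as in Proposition~\ref{phi}, the latter factor is $\Van{1-z}{\FF}$, realized as the two-term complex \eqref{complex}. By the hypothesis that $\FF$ has no sections supported at $1$, this complex is $\Van{1-z}{\FF}\hookrightarrow\Nea{1-z}{\FF}$, the map being the injective $\var$. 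The same holds at the other end with $\EE$ and $\FF$ swapped. Gluing the two halves along the open interval, where the local cohomology is $\Nea{}{\EE}\otimes\Nea{}{\FF}$ in degree $1$ (after the shift), yields a Mayer--Vietoris sequence. Its degree-$0$ term is
\[
\ker\bigl(\Nea{}{\EE}\otimes\Van{}{\FF}\,\oplus\,\Van{}{\EE}\otimes\Nea{}{\FF}\longrightarrow \Nea{}{\EE}\otimes\Nea{}{\FF}\bigr),
\]
where the map is the difference of the $\var$'s. With both $\var$ maps injective, this kernel is the intersection of the two subspaces $\Nea{}{\EE}\otimes\Van{}{\FF}$ and $\Van{}{\EE}\otimes\Nea{}{\FF}$ inside $\Nea{}{\EE}\otimes\Nea{}{\FF}$, which is $\Van{}{\EE}\otimes\Van{}{\FF}$. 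This gives \eqref{Ver2f}. In fact only the two one-sided conditions are used, and the tensor of two injective maps over a field is injective.

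\textbf{Step 3: naturality.} Check that the identification is natural and compatible with the monodromy conventions in \eqref{right2}.

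The main obstacle is the bookkeeping in Step 2. One must confirm that the local cohomology of $\mathcal{B}$ at the endpoints of $I$ really factors as stated, in the correct shifts, so that the constant factor contributes its fiber $\Nea{}{}$ rather than a vanishing-cycle term. One must also check that the gluing map is exactly $\var\otimes 1$ minus $1\otimes\var$ and involves no monodromy twist. I would verify this by reducing to $\EE$ and $\FF$ being $*$-extensions of local systems, where both sides are computed explicitly, and then using exactness in each variable.
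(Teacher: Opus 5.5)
Your proof is correct, but it organizes the computation on $I$ differently from the paper. Both arguments start with Proposition~\ref{basic} applied to $I=[0,1]$.

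\textbf{The paper's route.} It uses the closed stratification of $I$ into the endpoints and the open interior. This presents the local cohomology as the cone of $\Nea{}{\EE_1}\otimes\Nea{}{\FF_1}\to(\Ho{\bullet}(i^!\EE_1)\otimes\Nea{}{\FF_1})\oplus(\Nea{}{\EE_1}\otimes\Ho{\bullet}(i^!\FF_1))$, with shifts. It then maps the tensor product of the two Milnor-type complexes representing $\Van{}{\EE}$ and $\Van{}{\FF}$ into this cone. The cone of that comparison map is $\Ho{\bullet}(i_1^!\EE)\otimes\Ho{\bullet}(i_1^!\FF)$, and the hypothesis is used to see that it does not reach degree $0$.

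\textbf{Your route.} You use Mayer--Vietoris for a cover of $I$ by two half-open intervals. On each piece one factor of $\mathcal{B}$ is a trivial local system, so \cite{Galligo1985} is applied to one sheaf at a time. The two pieces contribute $\Nea{}{\EE}\otimes\Van{}{\FF}$ and $\Van{}{\EE}\otimes\Nea{}{\FF}$. The hypothesis enters only as injectivity of the two $\var$ maps, and the rest is the linear-algebra identity $(\Nea{}{\EE}\otimes\Van{}{\FF})\cap(\Van{}{\EE}\otimes\Nea{}{\FF})=\Van{}{\EE}\otimes\Van{}{\FF}$.

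\textbf{Comparison.} Your version is more elementary. It also exhibits the answer directly as a subspace of $\Nea{}{\EE}\otimes\Nea{}{\FF}$, which is how the result is used in Proposition~\ref{Ver1}. The paper's version is a derived-level comparison that holds without the hypothesis and identifies the defect as $i_1^!\EE\otimes i_1^!\FF$.

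\textbf{One correction.} Normalize so that $\mathcal{B}[-1]$ is perverse. Then the interior term $\Nea{}{\EE}\otimes\Nea{}{\FF}$ lies in degree $0$, like the half-interval terms, with nothing in degree $-1$. That is exactly why $H^0$ is the kernel you write; if the interior term sat in degree $1$, as you state, $H^0$ would be a direct sum instead.

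\textbf{Your gluing concern.} The gluing carries no monodromy twist. All fibers are identified by transport along $I$ itself, and each restriction map is $\var$ by the identification in the proof of Proposition~\ref{phi}.
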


\begin{proof}
We apply Proposition \ref{basic} to calculate the left-hand side of (\ref{Ver2f}).
The standard triangle for the local cohomology on $I$
associated with the stratification
$(\{0, 1\}, I)$
presents it as a complex in the derived category, that is a cone of
\begin{equation}
\begin{tikzcd}[cramped, sep=small]
\Nea{}{\EE_1}\otimes\Nea{}{\FF_1}\arrow[r]
& (\Ho{\bullet}(i_0^!\EE_1)[2]\otimes\Nea{}{\FF_1})\oplus (\Nea{}{\EE_1}\otimes\Ho{\bullet}(i_1^!\FF_1)[2])
\end{tikzcd}
\label{tens}
\end{equation}
Here, we use the identification of the cohomology of
the Milnor triangle (\ref{var}) with the complex given by stratification mentioned in the proof of Proposition \ref{phi}.
Analogously, applying  the canonical isomorphisms 
between nearby and vanishing cycles of perverse sheaves
and their Verdier specializations, one  may present 
the right-hand side of (\ref{Ver2f}) as a product of complexes
\begin{equation*}
\begin{tikzcd}[cramped, sep=small]
\Nea{}{\EE_1}\arrow[r]
& \Ho{\bullet}(i_0^!\EE_1)[2] 
\end{tikzcd} 
\quad \mbox{and} \quad
\begin{tikzcd}[cramped, sep=small]
\Nea{}{\FF_1}\arrow[r]
& \Ho{\bullet}(i_1^!\FF_1)[2]
\end{tikzcd}
\end{equation*}
One may see that there is a natural map from this tensor product
to (\ref{tens}), and the cone of this map is $\Ho{\bullet}(i_0^!\EE)[2]\otimes \Ho{\bullet}(i_1^!\FF)[2]$.
It follows that this map induces an isomorphism on the 0th cohomology,
because
$\EE$ and $\FF$ have no section supported at $1$.
\end{proof}

Combining Propositions \ref{Ver1} and \ref{Ver2} and (\ref{refl}),
we get for  perverse sheaves $\EE, \FF \in \Shs$ without section supported  at $1$ an isomorphism
\begin{equation}
\Van{{1-z}}{\EE}\otimes\Van{{1-z}}{\FF}=\Van{{1-z}}{\con{\EE}{\FF}} 
\label{map2}
\end{equation}


\section{Vanishing cycles and polygons}
\label{poly}

\subsection{Graphical representation}
\label{GrRep}

The isomorphism $\ph{I}$ between vanishing cycles $\Van{1-z}{\FF}$ and cohomology $\Ho{0}(\PP, \jj \FF)$, given by Proposition~\ref{phi}, admits a graphical interpretation in the following sense.
To a vanishing cycle one associates a cocycle in $H^0(I, i_I^!\jj \FF)$, and then pushes it forward to $\PP$.
 Thus, the interval $I$, together with a cocycle representing local cohomology supported on it, may be viewed as a graphical representation of a vanishing cycle.
We now construct a graphical representation of vanishing cycles for the convolution. More precisely, we describe their image in nearby cycles.

For two perverse sheaves $\EE, \FF\in \Shs$, consider the external product of the corresponding cocycles in $H^0_I(\PP,-)$. This yields a cocycle supported on a square in $\Gm\times\Gm$. Under the multiplication map, this square projects onto the interval $I$.
The fiber of this projection over an interior point of $I$ is an interval $[\lambda_1,\lambda_2]$ with $0<\lambda_1<\lambda_2<1$. The corresponding cocycle, viewed as a class in local cohomology supported on this interval (identified with $I$), is given by a section of $\EE\boxtimes\FF$ on this interval, namely the product of the restrictions of the two factors. These sections vanish at the endpoints.

The picture at the end of the text may be helpful here: one should imagine that the right vertical side of the pentagon-like gray figure is contracted, transforming the pentagon into a quadrilateral.

In this way, we obtain a section of the local system $\R^1 m_!(\EE[-1]\boxtimes\FF[-1])$ over the interior of $I$, which is identified there with $\con{\EE}{\FF}$. After pushforward, this defines a class in $\Ho{0}(\PP, j_{1*}j_1^*(\con{\EE}{\FF}))$. Composing with $\ph{I}$, we obtain a class in
$\Van{{1-z}}{j_{1*}j_1^*(\con{\EE}{\FF})}=\Nea{{1-z}}{\con{\EE}{\FF}}$.

\begin{prop}
For $\EE$ and $\FF$  from $\Shs$ without section supported at $1$, 
 the  map defined above
\begin{equation}
\begin{tikzcd}[cramped, sep=small]
\Ho{0}(\PP,\, \jj\EE)\otimes \Ho{0}(\PP,\,\jj\FF)\arrow[r]
& \Nea{{1-z}}{\con{\EE}{\FF}}
\end{tikzcd}
\end{equation}
factors through
\begin{equation}
\begin{tikzcd}[cramped, sep=small]
\var\colon
\Van{{1-z}}{\con{\EE}{\FF}}\arrow[r]
& \Nea{{1-z}}{\con{\EE}{\FF}}
\end{tikzcd}
\end{equation}
and is an isomorphism on the image, which is
$\Van{{1-z}}{\con{\EE}{\FF}}$.
\label{square}
\end{prop}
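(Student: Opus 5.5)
Our strategy is to show that the ``square'' map coincides, after the identifications $\ph{I}\otimes\ph{I}$, with the composite already built in Subsection~\ref{blowup}: the isomorphism of Proposition~\ref{Ver2}
\[
\Van{{1-z}}{\EE}\otimes\Van{{1-z}}{\FF}=\Ho{0}(\PP,\, j^\infty_!(r^*\EE_1\otimes\FF_1)[-1])=\Ho{-1}(\PP,\, t'_!t^{\prime *}\nea{1-z}{\mathcal{A}}),
\]
followed by the vertical arrow of Proposition~\ref{Ver1} into $\Nea{{1-z}}{\con{\EE}{\FF}}$. Once we have this comparison, the factorization through $\var$ is exactly the lifting established before Proposition~\ref{Ver1}. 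That lifting uses condition (\ref{vanish}) and the vanishing of $\Ho{0}(i_1^!\EE)\otimes\Ho{0}(i_1^!\FF)$. Injectivity then follows from Proposition~\ref{Ver1}. Surjectivity onto $\Van{{1-z}}{\con{\EE}{\FF}}$ follows from dimension counting: by Proposition~\ref{conv2} and Proposition~\ref{phi}, $\dim\Van{{1-z}}{\con{\EE}{\FF}}=\dim \Ho{0}(\PP,\jj\EE)\cdot\dim\Ho{0}(\PP,\jj\FF)$. Moreover, $\var$ is injective for the convolution by Proposition~\ref{support}, so ``isomorphism on the image'' makes sense.

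The comparison itself is done at the level of supports. First, represent $u\in\Van{{1-z}}{\EE}$ and $v\in\Van{{1-z}}{\FF}$ by classes in $H^0_{(0,1]}(\Gm,\EE)$ and $H^0_{(0,1]}(\Gm,\FF)$, as in the proof of Proposition~\ref{phi}. Their external product is a class supported on the square $(0,1]^2$. Lift it to $\Ms{5}$: the square's closure meets the fiber $p_3^{-1}(1)$ only over the corner $(1,1)$, and after blowing up this corner it meets the exceptional component $(12)(345)$ in a real segment. Next, compute the nearby cycles of the square class along $1-z$ by restricting the family of fibers $[\lambda_1,\lambda_2]$ as $z\to 1$. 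These segments shrink to the corner. In the blow-up coordinates (\ref{right2}) they converge to the interval $I$ in the component $(12)(345)$, and the section becomes the product of the specializations of $u$ and $v$ in $r^*\EE_1\otimes\FF_1$. This product is exactly the class in $\Ho{0}(\PP, j^\infty_!(r^*\EE_1\otimes\FF_1)[-1])$ produced in the proof of Proposition~\ref{Ver2} via Proposition~\ref{basic}. Since the sections vanish at the endpoints of the segments, the limiting class lies in $t'_!t'^*$ and has zero restriction to the component $(123)(45)$, which is consistent with (\ref{vanish}).

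Finally, we must check that taking nearby cycles commutes with $p_{3*}$ and with the identification of $\Nea{{1-z}}{\con{\EE}{\FF}}$ through $\ph{I}$ applied to $j_{1*}j_1^*(\con{\EE}{\FF})$ (Proposition~\ref{hol}, the nearby-cycles version of Proposition~\ref{phi}). In other words, the limit of the section of $\R^1m_!$ along $I$ as $z\to1$ should equal the specialization computed on the special fiber. We expect this to follow from proper base change for $p_3$ together with the Galligo description of $\Van{}{}$ via local cohomology.

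The main obstacle is the middle step: making rigorous that the limit of the shrinking segment classes is the interval class on the exceptional component, with the correct orientation and the reflection $r$. We would try to verify this by working in local blow-up coordinates $(1-t_1,\,(1-t_2)/(1-t_1))$. In these coordinates the square becomes a product of a radial interval and $I$, and the Verdier specialization is literally restriction to the zero section.
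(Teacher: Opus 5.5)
\textbf{There is a genuine gap, and it is the step you flag yourself.} You never prove that the square map equals the isomorphism of Proposition~\ref{Ver2} followed by the vertical arrow of Proposition~\ref{Ver1}. Two things are only expected, not shown:
\begin{enumerate}
\item that the limit, as $z\to 1$, of the classes supported on the shrinking segments $[\lambda_1,\lambda_2]$ is the interval class on the component $(12)(345)$;
\item that this specialization is compatible with $\ph{I}$ for $j_{1*}j_1^*(\con{\EE}{\FF})$.
\end{enumerate}
Everything else in your plan rests on this comparison: the factorization through $\var$, injectivity, and the dimension count. Moreover, the comparison is not a preliminary lemma. It is essentially the right-hand triangle of Theorem~\ref{Theorem}, and the paper proves that triangle afterwards \emph{using} the present proposition, so you cannot import it from there. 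As it stands, you have reduced the proposition to a harder statement and left that statement open.

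\textbf{The missing idea is that no special-fibre geometry is needed.} Let $\kappa$ be the K\"unneth isomorphism of Proposition~\ref{conv2}, and let $c\colon \con{\EE}{\FF}\to j_{1*}j_1^*(\con{\EE}{\FF})$ be the adjunction map. By construction, the map in the statement is
\[
\ph{I}^{-1}\circ \Ho{0}(\PP,\jj c)\circ\kappa ,
\]
where $\ph{I}^{-1}$ is taken for $j_{1*}j_1^*(\con{\EE}{\FF})$ and lands in $\Van{1-z}{j_{1*}j_1^*(\con{\EE}{\FF})}=\Nea{1-z}{\con{\EE}{\FF}}$. The reason is that restricting a class supported on $I$ away from $1$ is exactly taking its section over the interior of $I$.

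Proposition~\ref{nea2} is precisely the naturality of $\ph{I}$ with respect to $c$:
\[
\ph{I}^{-1}\circ\Ho{0}(\PP,\jj c)\circ\ph{I}=\var .
\]
Hence the square map equals $\var\circ\ph{I}^{-1}\circ\kappa$. Now:
\begin{enumerate}
\item $\ph{I}^{-1}\circ\kappa$ is an isomorphism onto $\Van{1-z}{\con{\EE}{\FF}}$ by Propositions~\ref{conv2} and~\ref{phi};
\item $\var$ is injective by Proposition~\ref{support}.
\end{enumerate}
This gives both the factorization through $\var$ and the isomorphism onto its image, with no need for Propositions~\ref{Ver1}, \ref{Ver2} or a dimension count.

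Your blow-up analysis is the right tool for the next step: checking that the resulting isomorphism $\Van{1-z}{\EE}\otimes\Van{1-z}{\FF}\cong\Van{1-z}{\con{\EE}{\FF}}$ agrees with (\ref{map2}). That, however, is the content of Theorem~\ref{Theorem}, not of this proposition.
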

\begin{proof}
By Proposition \ref{conv2}, the external product gives an isomorphism 
\begin{equation*}
\begin{tikzcd}[cramped, sep=small]
\Ho{0}(\PP,\, \jj\EE)\otimes H^0(\PP,\,\jj\FF)\arrow[r,"\sim"]
&\Ho{0}(\PP,\, \jj(\con{\EE}{\FF}))
\end{tikzcd}
\end{equation*}
By Proposition \ref{phi}, map $\ph{I}$
establishes an isomorphism between  $\Ho{0}(\PP,\,\allowbreak \jj(\con{\EE}{\FF}))$  and 
$\Van{{1-z}}{\con{\EE}{\FF}}$.
Compose it with the composition
\begin{equation}
\begin{tikzcd}[cramped, sep=small]
\Van{{1-z}}{\con{\EE}{\FF}}\arrow[r]
&\Van{{1-z}}{j_{1*}j_1^*(\con{\EE}{\FF})}\arrow[r, equal]
&\Nea{{1-z}}{\con{\EE}{\FF}}
\end{tikzcd}
\label{comp}
\end{equation}
where the arrow is induced by the canonical map
\begin{equation}
\begin{tikzcd}[cramped, sep=small]
\con{\EE}{\FF}\arrow[r]
&j_{1*}j_1^*(\con{\EE}{\FF})
\end{tikzcd}
\label{adj}
\end{equation}
By the very definition of $\ph{I}$, this is 
the map constructed before the proposition.
By Proposition \ref{nea2}, the composite map (\ref{comp}) is equal to $\var$.
By Proposition \ref{support}, 
$\con{\EE}{\FF}$ has no section supported at $1$, thus $\var$ is injective.
\end{proof}
%
%
%
%
%
%
%
%
%

\subsection{Compatibility}

Combining the constructions of the previous section with the graphical interpretation above, we obtain the following result.

\begin{theorem}
For $\EE, \FF \in \Shs$ 
without sections supported at $1$
the following diagram commutes
\begin{equation}
\begin{tikzcd}[cramped, column sep=1ex]
\Ho{0}(\PP, \, \jj j_{1*}j_1^* (\con{\EE}{\FF} ))\arrow[d, "\ph{I}^{-1}\circ r^*"]
&\Ho{0}(\PP, \, \jj (\con{\EE}{\FF} ))\arrow[d, equal, "(\ref{otimes})" ]
\arrow[ddr, bend left, "\ph{I}^{-1}", "\sim"'{sloped}, thin]\arrow[l, "(\ref{adj})"']
&{}\\
\Van{0}{j_{1*}j_1^*(\con{\EE}{\FF})}\arrow[d, "\var"]
&\Ho{0}(\PP,\, \jj\EE)\otimes \Ho{0}(\PP,\,\jj\FF)\arrow[d, equal, "\ph{I}\otimes\ph{I}"] 
&{}\\
\Nea{0}{\con{\EE}{\FF} }
&\Van{{1}}{\EE}\otimes\Van{{1}}{\FF}\arrow[l, "(\ref{map1})"'] \arrow[r, "(\ref{map2})", "\sim"']
&\Van{1}{\con{\EE}{\FF}}
\end{tikzcd}
\label{theorem}
\end{equation}
where  $\Van{{1}}{-}$, $\Van{{0}}{-}$ and $\Nea{{0}}{-}$ denote
vanishing cycles at $1$ along $1-z$ and vanishing and nearby cycles at $0$
along $z$ respectively.
%
\label{Theorem}
\end{theorem}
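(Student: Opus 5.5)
The diagram splits into a left square and a right triangle, and I would check them separately.

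\emph{Left square.} Start from $\Ho{0}(\PP,\jj(\con{\EE}{\FF}))$ and go down the left column. We must show that $\var\circ\ph{I}^{-1}\circ r^*\circ(\ref{adj})$ equals the map (\ref{map1}) precomposed with $(\ph{I}\otimes\ph{I})\circ(\ref{otimes})$. This is essentially Proposition~\ref{Map1} read backwards. That proposition writes (\ref{map1}) as the composition: the tensor isomorphism of Proposition~\ref{conv2}, then the canonical map to $\Ho{0}(\PP,\mathop{j_{1*}j_1^*j_{\infty!}j_{0*}}(\con{\EE}{\FF}))$, then $\ph{I}^{-1}$, then $var_0$.

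So the only thing left is to match the two identifications at $0$. The left column passes through the reflection $r^*$, which swaps the roles of $0$ and $1$ and turns the $\ph{I}$ at $1$ into the $\ph{I}$ at $0$. Here I would invoke Proposition~\ref{hol}, which says that $\ph{I}^{-1}\circ r$ followed by $\var$ computes nearby cycles at $0$ via holonomy along $I$. This is exactly how $\Hol{I}$ entered the gluing construction (\ref{gath2}). Finally I would check that the order of the extensions, $\jj j_{1*}$ versus $j_{1*}\jj$, does not matter, since they act at distinct points.

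\emph{Right triangle.} We must show that the isomorphism (\ref{map2}) coincides with $\ph{I}^{-1}\circ(\ref{otimes})^{-1}\circ(\ph{I}\otimes\ph{I})^{-1}$. Since $\con{\EE}{\FF}$ has no sections supported at $1$ (Proposition~\ref{support}), $\var$ is injective. It therefore suffices to compare the two maps after composing with $\var$ into $\Nea{1-z}{\con{\EE}{\FF}}$.

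By Proposition~\ref{square}, $\var\circ\ph{I}^{-1}\circ(\ref{otimes})\circ(\ph{I}\otimes\ph{I})$ is the graphical map: take the external product of the two cocycles supported on $I$, giving a square, push it forward under $m$, and read off its nearby cycle at $1$. On the other side, (\ref{map2}) is defined by Propositions~\ref{Ver1} and~\ref{Ver2}. An element of $\Van{}{\EE}\otimes\Van{}{\FF}$ is realised as a class in $\Ho{-1}(\PP,t'_!t'^*\nea{1-z}{\mathcal{A}})$ on the exceptional component $(12)(345)$, using the Verdier specializations $r^*\EE_1\otimes\FF_1$. This class is then pushed into $\Nea{1-z}$ via (\ref{tt}).

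\emph{Main obstacle.} The key step is to show that these two classes in $\Nea{1-z}{\con{\EE}{\FF}}$ agree. I would compute the nearby cycle of the square cocycle by degenerating the fiber intervals $[\lambda_1,\lambda_2]$ as the base point tends to $1$. In the blow-up $\Ms{5}$, the square near the corner $(1,1)$ specializes to a cocycle on the exceptional component $(12)(345)$. This is the ``pentagon'' picture, in which the side over the blown-up point becomes the contracted vertical edge.

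On that component, the restriction of the cocycle is the external product of the Verdier specializations of the two interval cocycles. By the proof of Proposition~\ref{Ver2}, which identifies local cohomology on $I$ with the Milnor complex via \cite{Galligo1985}, this restriction is exactly the image of $u\otimes v$. Away from the exceptional component the specialized cocycle vanishes, consistent with (\ref{vanish}). This gives the equality, and the triangle commutes.

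Making this specialization argument rigorous at the level of cocycles — compatibility of nearby cycles with pushforward along $p_3$ together with the Verdier specialization of a cocycle supported on a corner — is the step I expect to be hardest.
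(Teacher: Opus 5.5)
Your proposal is correct and follows essentially the same route as the paper. The right triangle is checked after composing with the injective $\var$ (Propositions~\ref{support} and~\ref{square}), by identifying the limit at $1$ of the section coming from the square cocycle with the class on the blown-up component $(12)(345)$ built in Propositions~\ref{Ver1} and~\ref{Ver2}; the left square is reduced to Proposition~\ref{Map1}, and the specialization step you flag as hardest is exactly the one the paper itself handles only pictorially.
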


\begin{proof}
We begin with the right triangle of the diagram. The right isomorphism $\ph{I}$ may be factored through 
\begin{equation*}
\begin{tikzcd}[cramped, sep=small]
\Van{{1-z}}{\con{\EE}{\FF}}\arrow[r]
&\Van{{1-z}}{j_{1*}j_1^*(\con{\EE}{\FF})}\arrow[r, equal]
&\Nea{{1-z}}{\con{\EE}{\FF}}
\end{tikzcd}
\end{equation*}
as in Proposition \ref{square}.
In that proposition, we present elements of
as cohomology with support on $I$, that is as sections of 
$\con{\EE}{\FF}$ over $I$. 
By the very definition of $\ph{I}$ in Proposition \ref{phi}
via \cite{Galligo1985}, the
image of such a class in $\Nea{1}{\con{\EE}{\FF}}$
 is the limit of this section.
If we use the blown-up fiber over $1$ as in Subsection~\ref{blowup}, then in the figure below this section is represented by the right vertical edge of the gray pentagon.
The limit of it is the bold line connecting $4$ and $5$ drawn on the figure presented,  the blown-up fiber over $1$. Thus, the class of nearby cycles
where $\ph{I}$ landed is exactly the class, constructed in 
Proposition \ref{Ver2} by means of the shrinking arguments from Proposition \ref{basic}. 

The left part of the diagram may be treated likewise.
This fact is essentially the content of Proposition \ref{Map1}.
\end{proof}

The theorem gives a map 
from vanishing cycles of the convolution at $1$ to its nearby cycles at $0$.
The following proposition interprets
this map in terms of  holonomy along $I$.

\begin{prop}
In notations of Theorem \ref{Theorem} the following diagram commutes
\begin{equation}
\begin{tikzcd}[cramped]
\Nea{0}{\con{\EE}{\FF} }\arrow[d, equal]
&\Van{{1}}{\EE}\otimes\Van{{1}}{\FF}\arrow[l, "(\ref{map1})"'] \arrow[r, "(\ref{map2})", "\sim"']
&\Van{1}{\con{\EE}{\FF}}\arrow[d, "\var"]\\
\Nea{0}{\con{\EE}{\FF} }\arrow[rr, equal, "\Hol{I}"]
&{}
&\Nea{1}{\con{\EE}{\FF} }
\end{tikzcd}
\end{equation}
where the top row is the bottom row of (\ref{theorem}) and the isomorphism downstairs
is the holonomy along $I$.
\label{HolI}
\end{prop}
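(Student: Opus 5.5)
The plan is to use Theorem~\ref{Theorem} to rewrite the composite through the top row as a map out of $\Ho{0}(\PP,\,\jj(\con{\EE}{\FF}))$, and then recognise it via Proposition~\ref{hol} applied to the local system $\LL=j_1^*(\con{\EE}{\FF})[-1]$ on $\Gms$. The outer square of (\ref{theorem}) says that, after precomposing with $\ph{I}\colon \Van{1}{\con{\EE}{\FF}}\to \Ho{0}(\PP,\,\jj(\con{\EE}{\FF}))$, the map (\ref{map1})$\circ$(\ref{map2})$^{-1}$ becomes
\[
\var_0\circ \ph{I}^{-1}\circ r^*\circ(\ref{adj}).
\]
So it suffices to show that
\[
\Hol{I}\circ \var_0\circ \ph{I}^{-1}\circ r^*\circ (\ref{adj})\circ \ph{I} = \var_1 ,
\]
where $\var_1$ is the map $\Van{1}{\con{\EE}{\FF}}\to\Nea{1}{\con{\EE}{\FF}}$ (up to the orientation convention for $\Hol{I}$).

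First I will apply Proposition~\ref{nea2} to $\con{\EE}{\FF}$. It identifies $(\ref{adj})\circ\ph{I}$ with $\ph{I}\circ\var_1$, where the second $\ph{I}$ is the isomorphism
\[
\Nea{1}{\con{\EE}{\FF}}=\Van{1-z}{j_{1*}j_1^*(\con{\EE}{\FF})}\to \Ho{0}(\PP,\,\jj j_{1*}j_1^*(\con{\EE}{\FF})).
\]
The left-hand side then becomes
\[
\Hol{I}\circ\bigl(\var_0\circ\ph{I}^{-1}\circ r^*\circ\ph{I}\circ \var_1^{-1}\bigr)\circ\var_1 ,
\]
where the middle bracket, read on $\Nea{1}{j_{1*}\LL[1]}$, is exactly the composite in Proposition~\ref{hol}. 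Here $\var_1$ at $1$ is the identification of vanishing and nearby cycles for the $*$-extended sheaf $j_{1*}\LL[1]$. Proposition~\ref{hol} identifies that composite with the holonomy along $I$ from $1$ to $0$. Composing with $\Hol{I}$ from $0$ to $1$ gives the identity, which yields $\var_1$ as required.

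The main obstacle is bookkeeping rather than substance. We must check that the map labelled $\ph{I}^{-1}\circ r^*$ in Theorem~\ref{Theorem} is literally the one in Proposition~\ref{hol}: the $*$-extensions at $0$ and $1$ are swapped by $r$, and this matches $\jj j_{1*}$ versus $j_{\infty!}j_{1*}j_{0*}$. We must also check that the direction and normalisation of $\Hol{I}$ agree, in particular that the two $\var$ maps enter with the same sign. I would settle this using the description in the proof of Proposition~\ref{hol}: classes in $\Ho{0}(\PP,\,\jj j_{1*}\LL[1])$ are sections of $\LL$ over the open interval $(0,1)$, and both $\var_0\circ\ph{I}^{-1}$ and $\var_1\circ\ph{I}^{-1}$ take limits of such a section at the two endpoints. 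Holonomy along $I$ then carries one limit to the other by definition. A further point to verify is that no hypothesis beyond "no sections supported at $1$" is needed. This holds because (\ref{map2}) is an isomorphism under that hypothesis, so the square can be checked after composing with it.
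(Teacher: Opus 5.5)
Your proposal is correct and is essentially the paper's argument. The paper's proof notes that the maps to $\Nea{0}{\con{\EE}{\FF}}$ and to $\Nea{1}{\con{\EE}{\FF}}$ both arise, via the isomorphisms $\ph{I}$ at the two endpoints, from the same class in $\Ho{0}(\PP,\, \jj j_{1*}j_1^*(\con{\EE}{\FF}))$ (the construction preceding Proposition~\ref{square}, which rests on Proposition~\ref{nea2}), and then invokes Proposition~\ref{hol}, just as you do through the outer square of (\ref{theorem}) and Proposition~\ref{nea2}. You only make the bookkeeping explicit, with one verbal slip: the identity you extract is $(\ref{map1})\circ(\ref{map2})^{-1}=\var_0\circ\ph{I}^{-1}\circ r^*\circ(\ref{adj})\circ\ph{I}$, so you precompose with $\ph{I}^{-1}$, not $\ph{I}$, and your displayed equation already uses this correct form.
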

\begin{proof}
By the very construction preceding Proposition \ref{square}, images of maps from $\Van{{1}}{\EE}\otimes\Van{{1}}{\FF}$
in nearby cycles coincide with those obtained from
$\Ho{0}(\PP,\, j_{1*}j_1^*(\con{\EE}{\FF}))$ via the isomorphisms $\ph{I}$.
Then, the statement  is a corollary of Proposition \ref{hol}.
\end{proof}

The following picture illustrates the proposition above.
The gray pentagon-like figure is the graphical representation
of the cohomology of the convolution. It may also be viewed as the trace of a cohomology class in the fiber under transport along the interval $I$.

\begin{figure}[hb]
\includegraphics{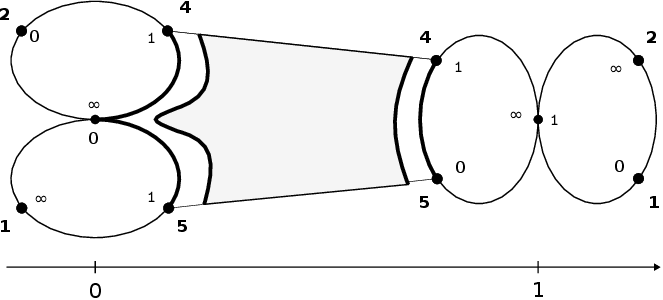}
\centering
\end{figure}


\section{Transport algebra}
\label{Trans}

\subsection{Fundamental groups}

For a topological space $X$ and points $a,b \in X$, we denote by $\fg{X}{a}$
the fundamental group of $X$ with basepoint $a$, and by
$\pa{X}{a}{b}$ the set of homotopy classes of paths from $a$ to $b$.
The set $\pa{X}{a}{b}$ carries the structure of a right torsor over
$\fg{X}{a}$ and a left torsor over $\fg{X}{b}$.

We denote by $\fgu{X}{a}$ the group of $\kf$-points of the pro-unipotent completion of $\fg{X}{a}$.
Equivalently, it is the group of group-like elements in the completed group algebra
$\kf\bra{\fg{X}{a}}$.

In the case of a free group with generators $X_1,\dots,X_n$, its pro-unipotent completion is described by the completed algebra $\kf\bra{X_1,\dots,X_n}$. The completion is taken with respect to the two-sided ideal generated by $X_i - 1$. We refer to this description as \emph{augmentation coordinates}.

On the other hand, using the Lie algebra of $\fgu{X}{a}$, the same object can be described using the completed universal enveloping algebra. In the free case, this identifies with $\kf\bra{\log X_1,\dots,\log X_n}$, completed with respect to the ideal generated by $\log X_i$. We refer to these as \emph{logarithmic coordinates}.

The $\fgu{X}{a}$-torsor $\pau{X}{a}{b}$ is obtained from $\pa{X}{a}{b}$ by a change of structure group along the natural map
$\fg{X}{a} \to \fgu{X}{a}$. Applying the same construction to the left action yields the same torsor, so $\pau{X}{a}{b}$ is also a left torsor over $\fgu{X}{b}$. Elements of $\pau{X}{a}{b}$ are called pro-unipotent paths.
We denote by $\kf\langle \pau{X}{a}{b} \rangle$ the completed $\kf$-vector space spanned by the pro-unipotent paths.

Let $V$ be a (pro-)unipotent local system, i.e.\ a local system equipped with a decreasing filtration such that the associated graded local system is constant. For a pro-unipotent path $g \in \pau{X}{a}{b}$, the holonomy
$\Hol{g}\colon V_a \to V_b$ is defined; it is an isomorphism between the fibers at $a$ and $b$.
In particular, for loops this yields a representation of $\fgu{X}{a}$.
If $a$ and $b$ are points at infinity, the holonomy induces an isomorphism between nearby cycles. For more details, see e.g.\ \cite[\S 9]{DGal}.

The group ring of the pro-unipotent completion of the fundamental group of $\Gms$ is isomorphic to the completed free algebra
$\kf\bra{X_0, X_1}$.
The generator $X_1$ corresponds to the counterclockwise loop around $1$, based at the tangential base point $1 - z$, in $\fgu{\Gms}{1-z}$.
The generator $X_0$ is the conjugate, by $I$, of the counterclockwise loop around $0$, based at the tangential base point $z$.
The corresponding logarithmic coordinates are denoted by $e_i = \log X_i$.

\begin{rem}
The dichotomy between augmented and logarithmic coordinates
corresponds to the Betti and de~Rham realizations in \cite{DLet, DTerICM, DTerPr}.
While this distinction is conceptually important in Hodge theory, in our setting the choice of coordinates is primarily a matter of convenience.
\end{rem}

\subsection{Can, var and Var}

Given a perverse (or constructible) sheaf $\FF$ and a function $f$, the nearby and vanishing cycle functors are related by natural morphisms
$$
\begin{tikzcd}[cramped]
\van{f}{\FF^\bullet}  \arrow[r,"var"]
& \nea{f}{\FF^\bullet}
\end{tikzcd}
\qquad\mbox{and}\qquad
\begin{tikzcd}[cramped]
\nea{f}{\FF^\bullet}  \arrow[r,"can"]
& \van{f}{\FF^\bullet}
\end{tikzcd}
$$
which satisfy the relations
\begin{equation}
\can\circ \var = T-1 \qquad\mbox{and}\qquad \var\circ \can = T-1,
\label{canvar}
\end{equation}
where $T$ denotes the monodromy operator.

If the operator $T$ is unipotent (for instance, if $\FF$ is smooth and unipotent outside the special fiber), we define, following \cite[Sec.~3.4.10]{Saito1988}, a morphism
$$
\begin{tikzcd}[cramped]
\van{f}{\FF^\bullet}  \arrow[r,"Var"]
& \nea{f}{\FF^\bullet}
\end{tikzcd}
$$
by
$$
\Var = \var\circ \frac{\log T}{T-1}
\qquad\mbox{or equivalently}\qquad
\Var = \frac{\log T}{T-1}\circ \var.
$$
By definition, this morphism satisfies the relations
\begin{equation}
\can\circ \Var = \log T \qquad\mbox{and}\qquad \Var\circ \can = \log T.
\label{Var}
\end{equation}
Thus, one may think of $\Var$ as a logarithmic (or de~Rham) version of $var$.

In the setting of this paper, let $\FF \in \Shs$ be a sheaf that is unipotent outside $1$.
Recall (Subsection~\ref{Without}) that the condition that $\FF$ has no sections supported at $1$ means that the map
$
\var\colon \Van{1-z}{\FF}\to \Nea{1-z}{\FF}
$
is injective. Since $\Var$ differs from $\var$ by composition with an automorphism, it is also injective.

The following statement allows one to describe $\can$ in this situation.

\begin{prop}
In the above setting, the compositions of
\[
\can\colon \Nea{1-z}{\FF}\to \Van{1-z}{\FF}
\]
with $\var, \Var\colon \Van{1-z}{\FF}\to \Nea{1-z}{\FF}$
are given by the action of $X_1 - 1$ and $e_1$, respectively, on the nearby cycles.
\label{image}
\end{prop}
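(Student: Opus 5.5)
The statement is essentially a reformulation of the relations (\ref{canvar}) and (\ref{Var}), once the monodromy $T$ on $\Nea{1-z}{\FF}$ is identified with the action of the generator $X_1$. I would prove it in three steps.

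First, I would identify the monodromy operator $T$ on $\Nea{1-z}{\FF}$ with the action of $X_1\in\fgu{\Gms}{1-z}$. Since $\FF$ is smooth and pro-unipotent on $\Gms$, the nearby cycles $\Nea{1-z}{\FF}$ are the fiber of the local system $j_1^*\FF[-1]$ at the tangential base point $1-z$. This is exactly how holonomy is defined at points at infinity in the previous subsection (cf. \cite[\S 9]{DGal}). Under this identification, the monodromy of nearby cycles along the function $1-z$ is the holonomy along a small counterclockwise loop around $1$ based at $1-z$. By the convention fixed above, that loop is $X_1$. The only point to check is orientation: the monodromy $T$ in the Milnor triangle is induced by $1-z\mapsto e^{2\pi i\theta}(1-z)$, which is the counterclockwise loop around $1$. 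Unipotency of $T$ holds because $j_1^*\FF$ is pro-unipotent, so $\log T$ is defined and corresponds to $e_1=\log X_1$ under the map $\kf\bra{X_0,X_1}\to\mathrm{End}\,\Nea{1-z}{\FF}$.

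Second, I would compose. By (\ref{canvar}), $\var\circ\can=T-1$, which by the first step is the action of $X_1-1$. By (\ref{Var}), $\Var\circ\can=\log T$, which is the action of $e_1$. Here I read ``compositions of $\can$ with $\var,\Var$'' as the endomorphisms $\var\circ\can$ and $\Var\circ\can$ of $\Nea{1-z}{\FF}$, since that is the space on which the fundamental group acts.

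Third, I would explain why this determines $\can$. Because $\FF$ has no sections supported at $1$, both $\var$ and $\Var$ are injective. So $\can(x)$ is the unique preimage under $\var$ of $(X_1-1)x$, or equivalently under $\Var$ of $e_1x$. In particular, the image of $\var$ contains $(X_1-1)\Nea{1-z}{\FF}$.

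I expect the main obstacle to be the first step, that is, matching sign and orientation conventions precisely. This means checking that the $T$ appearing in (\ref{canvar}) for the function $1-z$ coincides with $X_1$ rather than $X_1^{-1}$, under the identification of $\Nea{1-z}{\FF}$ with the fiber at the tangential base point. I would verify this on the rank-one model $j_{1*}\LL[1]$, where $\LL$ has monodromy $\lambda$, using the Galligo description of vanishing cycles from Proposition \ref{phi}. Naturality then extends the identification to general unipotent $\FF$.
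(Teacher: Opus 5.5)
Your proposal is correct and follows the paper's argument: the paper's proof is the one line that the statement follows immediately from \eqref{canvar} and \eqref{Var}, identifying $T$ on $\Nea{1-z}{\FF}$ with the action of $X_1$ (so $\log T$ corresponds to $e_1$). Your first step, including the orientation check, makes that identification explicit where the paper leaves it implicit, and your third step (recovering $\can$ from the injectivity of $\var$) goes beyond what the statement requires but is consistent with how the paper later uses it.
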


\begin{proof}
This follows immediately from \eqref{canvar} and \eqref{Var}.
\end{proof}

\subsection{Transport algebra}
\label{Transport}

By \cite[Ch.~1.12]{jacobson}, for an associative algebra $U$
and an element $a\in U$, the $a$-homotope of $U$ is the algebra (without unit) with the same underlying vector space and product
$
x \cdot_a y = x a y.
$
For an invertible element $i\in U$, the $a$-homotope is isomorphic to the $ia$- and $ai$-homotopes; the isomorphisms are given by left and right multiplication by $i$.

Let $X$ be a Riemann surface, $p \in X$ a point, and $v$ a nonzero
tangent vector at $p$, which we use as a tangential base point
for $\fg{X\setminus\{p\}}{v}$. Denote by $T \in \fg{X\setminus\{p\}}{v}$
the counterclockwise loop around $p$.
We define the \emph{transport algebra} of the pair $(X, p)$ with respect to
the tangential base point $v$ to be the $(T-1)$-homotope of the group algebra $\kf[\fg{X\setminus\{p\}}{v}]$.

For any perverse sheaf $\FF$ on $X$ smooth outside $p$,
the transport algebra acts on the right on $\Van{v}{\FF}$. The action of an element corresponding to
$a \in \kf[\fg{X\setminus\{p\}}{v}]$ is given by the composition
\begin{equation}
\begin{tikzcd}[cramped]
\Van{v}{\FF} \arrow[r,"var"]
&\Nea{v}{\FF}\arrow[r,"\Hol{a}"]
&\Nea{v}{\FF} \arrow[r,"can"]
& \Van{v}{\FF}
\end{tikzcd}
\label{transport}
\end{equation}
Here $\Hol{a}$ is given by the right action of the fundamental group  on the nearby cycles. This defines an action of the transport algebra in view of \eqref{canvar}. 

We define the pro-unipotent transport algebra by replacing the fundamental group with its pro-unipotent completion. 
Additionally, one may introduce a logarithmic version of the pro-unipotent transport algebra. It is defined as the $\log T$-homotope, and its action on vanishing cycles is given by \eqref{transport} with $var$ replaced by $Var$.
This logarithmic transport algebra is isomorphic to the initial one, since $var$ and $Var$ differ by an invertible element.

In the setting of this paper, we take $X = \Gm$, $p = 1$,
and the tangential base point $v = 1-z$, so that
$\fg{X\setminus\{p\}}{v} = \fg{\Gms}{1-z}$.
Following \cite{DTerPr, DTerICM}, denote the corresponding pro-unipotent transport algebra with a unit adjoined by $W$.
One may identify the transport algebra with the left ideal of the group ring generated by $X_1-1$ (equivalently, by $e_1$):
\begin{align}
W&=\kf\cdot 1\;\oplus\; \kf\bra{X_0, X_1}(X_1-1) \label{ideal}\\
W&= \kf\cdot 1\;\oplus\; \kf\bra{e_0, e_1}e_1 \label{logideal}
\end{align}

The structure of $W$ in augmentation coordinates is not obvious (see \cite[Sec.~2.2]{Enriquez2021} for details), but in logarithmic coordinates it is simple.

\begin{prop}
As a topological algebra, $W$ is isomorphic to the completed free algebra generated by
$y_i = -e_0^{i-1}e_1$ for $i\ge 1$:
\begin{equation*}
W = \kf\bra{y_1, y_2, \dots}
\end{equation*}
\label{StructureTr}
\end{prop}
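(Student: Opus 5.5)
We work in the logarithmic model \eqref{logideal}: $W=\kf\cdot 1\oplus \kf\bra{e_0,e_1}e_1$, where $W$ is the $e_1$-homotope of $\kf\bra{e_0,e_1}$ (identifying the $\log T$-homotope with $\log T=e_1$), with a unit adjoined. The plan is to transport the homotope product to ordinary concatenation. In the $e_1$-homotope, $x\cdot y = x e_1 y$. Consider the linear map
\[
\Theta\colon \kf\bra{e_0,e_1}\to \kf\bra{e_0,e_1}e_1,\qquad \Theta(x)=x e_1 ,
\]
extended by $1\mapsto 1$ on the adjoined unit. We have $\Theta(x\cdot y)=xe_1ye_1=\Theta(x)\Theta(y)$, where the right-hand side is the ordinary product. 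Hence $\Theta$ identifies the homotope with the nonunital subalgebra $\kf\bra{e_0,e_1}e_1$ of the free algebra under the usual multiplication. Since $\Theta$ is clearly bijective onto the left ideal and continuous for the adic topologies, $W$ becomes the closed unital subalgebra $\kf\cdot1\oplus\kf\bra{e_0,e_1}e_1$ of $\kf\bra{e_0,e_1}$. This matches \eqref{logideal} read as a subalgebra; the homotope description and the ideal description agree precisely via this shift.

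Next we show that this subalgebra is freely topologically generated by $y_i=-e_0^{i-1}e_1$. Every word ending in $e_1$ factors uniquely as a concatenation of blocks $e_0^{i-1}e_1$, because we can cut after each occurrence of $e_1$. Thus the monomials $y_{i_1}\cdots y_{i_k}$ correspond bijectively, up to sign, to the words ending in $e_1$ (together with the empty word). The continuous map $\kf\bra{y_1,y_2,\dots}\to\kf\cdot1\oplus\kf\bra{e_0,e_1}e_1$ therefore sends a topological basis bijectively to a topological basis.

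The remaining step is to check compatibility of topologies. Grade by word length, and give $y_i$ degree $i$. The map preserves degree, and each degree piece is finite-dimensional on both sides, with only finitely many $y$-monomials of a given degree. The completions are therefore the products over degrees, and the map is a homeomorphism. We also need the convergence of series in infinitely many $y_i$, which is handled by the same degree argument.

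The main subtlety, rather than an obstacle, is to be careful about which side the factor $e_1$ sits on and about the unit. The transport algebra acts on the right via \eqref{transport}, so one must check that the homotope multiplication convention ($x\,e_1\,y$ versus a convention matching right action order) is compatible with choosing $\Theta(x)=xe_1$ rather than $e_1x$. In the latter case the image would be the right ideal $e_1\kf\bra{e_0,e_1}$ and the generators would become $e_1e_0^{i-1}$. We would first verify the convention against \eqref{logideal}, which the paper states as a left ideal. With that settled, the argument above goes through, and the sign in $y_i$ is irrelevant for freeness.
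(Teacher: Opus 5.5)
Your argument is correct and is exactly the ``direct computation'' the paper leaves to the reader. The map $x\mapsto xe_1$ turns the $e_1$-homotope into the left ideal $\kf\bra{e_0,e_1}e_1$ with ordinary multiplication, unique factorization of words ending in $e_1$ into blocks $e_0^{i-1}e_1$ gives freeness, and giving $y_i$ weight $i$ correctly matches the completion to the adic topology on $W$, which the paper's statement tacitly assumes.
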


\begin{proof}
This follows by a direct computation.
\end{proof}

\subsection{Harmonic coproduct}

Recall that in Subsection~\ref{blowup} we constructed an isomorphism between the vanishing cycles of the convolution
$
\Van{1-z}{\con{\EE}{\FF}}
$
and the tensor product
$
\Van{1-z}{\EE} \otimes \Van{1-z}{\FF}.
$
Thus, for $\EE, \FF \in \Shsu$, the transport algebra $W$ acts on the tensor product of their vanishing cycles.

The following theorem states that this action is described by a coproduct on the transport algebra, called the \emph{harmonic coproduct}. 
Its formula is most naturally expressed in logarithmic coordinates, in terms of the free generators introduced in Proposition~\ref{StructureTr}. The expression in augmentation coordinates is more complicated (see \cite[Sec.~8.2]{Enriquez2021}), but can also be recovered from the proof below.

\begin{theorem}[\cite{DTerPr, DTerICM}]
For perverse sheaves $\EE, \FF \in \Shsu$ without sections supported at $1$, the action of $w \in W$ on $\Van{1-z}{\con{\EE}{\FF}}$, which is identified with $\Van{1-z}{\EE} \otimes \Van{1-z}{\FF}$ via the above isomorphism, is given by coproduct $\cop(w)$, which is determined on generators (see Proposition~\ref{StructureTr}) by
\begin{equation}
\cop(y_n) = \sum_{i=0}^n y_i \otimes y_{n-i}, \qquad y_0 = 1.
\label{hcoproduct}
\end{equation}
\label{Hcoproduct}
\end{theorem}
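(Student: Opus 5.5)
We plan to compute the action of $W$ on $\Van{1-z}{\con{\EE}{\FF}}$ by transporting everything into nearby cycles via the injective map $\Var$. This is legitimate because, by Proposition~\ref{support}, the convolution has no sections supported at $1$. The computation then takes place on the blown-up fiber $p_3^{-1}(1)$ of Subsection~\ref{blowup}.

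By \eqref{logideal} and Proposition~\ref{StructureTr}, the element $y_n$ acts by $\Var\circ(-e_0^{n-1}e_1)\circ\can$. It is therefore enough to understand two things on $\Nea{1-z}{\con{\EE}{\FF}}$: the operator $e_1$, together with $\can$ through Proposition~\ref{image}, and the operator $e_0$ obtained by conjugating the local monodromy at $0$ by $I$. Since $W$ is topologically free on the $y_i$, the map $w\mapsto$ (its action on the tensor product) is an algebra map, and it suffices to check \eqref{hcoproduct} on generators.

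The first key step is to identify the nearby cycles $\Nea{1-z}{\con{\EE}{\FF}}$, as a module over $\fgu{\Gms}{1-z}$, with cohomology on the special fiber. Through \eqref{refl}, the vanishing-cycle part is $\Ho{0}(\PP, j^\infty_!(r^*\EE_1\otimes\FF_1)[-1])$ on the exceptional component $(12)(345)$. The loop $X_1$ around $1$ acts on the Verdier specializations $\EE_1,\FF_1$ diagonally. Hence $e_1$ acts on the tensor product as $e_1\otimes1+1\otimes e_1$ (logarithms of commuting unipotent operators add), and this gives $\cop(y_1)=y_1\otimes1+1\otimes y_1$.

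The second step handles $e_0$. By Proposition~\ref{HolI}, the transport along $I$ from $1$ to $0$ matches the isomorphism \eqref{map2} with \eqref{map1}. So conjugating by $I$ turns the problem into the monodromy at $0$ on $\Nea{0}{\con{\EE}{\FF}}$, computed on the nodal fiber $p_3^{-1}(0)$ via the cone \eqref{cone} and the gluing \eqref{gath2}. There, the two components carry $\EE\otimes\FF_0$ and $\EE_0\otimes\FF$, so $e_0$ acts as $e_0\otimes1+1\otimes e_0$ on the relevant pieces. The node contributes a correction term, which is the class in $s_*s^!$ controlled by Proposition~\ref{frame}: it couples the $\var$-image of one factor with the vanishing cycle of the other.

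Finally, we expand $e_0^{n-1}e_1$ using the two formulas above together with the node correction. The terms $y_i\otimes y_{n-i}$ with $0<i<n$ should arise exactly from that correction, each time a power of $e_0$ passes through the node. We verify this by induction on $n$, using the recursion $y_{n+1}=e_0y_n$ (up to sign) and comparing with the recursion $\cop(y_{n+1})=\sum_i y_{i+1}\otimes y_{n-i}+1\otimes y_{n+1}$. This is the standard characterization of the stuffle (harmonic) coproduct.

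The main obstacle is making the node correction precise. We need to show that $e_0$ on the convolution is not simply primitive but equals $e_0\otimes1+1\otimes e_0$ plus a term of the form $(\can\otimes\can)$-type composed with $\Var\otimes\Var$. Getting this right is what produces the mixed terms $y_i\otimes y_{n-i}$. We would first attempt this in the simplest test case, $\EE,\FF$ both equal to $j_{1*}$ of a rank-two unipotent local system, to pin down signs and normalizations, before doing the general gluing computation.
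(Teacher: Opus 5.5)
Your outline has the right overall shape: pass to $\Var$-images in nearby cycles, compute $e_1$ and $e_0$, then expand $y_n=-e_0^{n-1}e_1$. However, the ingredients you list cannot determine the answer for $n\ge 2$.

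After applying $\Var$, the action of $y_n$ becomes: first apply $-e_0^{n-1}$ to $\Var(v)$, \emph{then} apply $e_1=\Var\circ\can$. Reduce to $j_{1*}$ of universal unipotent systems and put $E=\Van{1}{\EE}$, $F=\Van{1}{\FF}$. Then $\Nea{1-z}{\con{\EE}{\FF}}$ has the size of two copies of $E\otimes F$, and $\mathrm{im}(\Var)$ is one copy. Moreover $e_0$ does not preserve $\mathrm{im}(\Var)$. So you need $e_1$ on the whole nearby-cycle space, in particular on a complement of $\mathrm{im}(\Var)$, and in the same coordinates in which you know $e_0$. Your ingredients do not supply this:
\begin{itemize}
\item Your first step gives $e_1$ only on $\mathrm{im}(\Var)$, which settles $y_1$ and nothing more.
\item Proposition~\ref{image} merely restates $\Var\circ\can=e_1$.
\item Proposition~\ref{HolI} transports along $I$ only the image of $\Van{1}{\EE}\otimes\Van{1}{\FF}$, not the whole nearby-cycle space. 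So an $e_0$-action computed on the nodal fiber over $0$ cannot be carried back to $1$ and composed with $e_1$.
\end{itemize}
Relatedly, your ansatz ``$e_0=e_0\otimes1+1\otimes e_0+{}$correction'' treats $e_0$ as an operator on $E\otimes F$, which it is not. On the two components of the nodal fiber it acts by $1\otimes e_0$ and by $e_0\otimes 1$ respectively, since only the degenerating factor carries monodromy; it does not act by $e_0\otimes1+1\otimes e_0$. Also, the guessed correction $(\can\otimes\can)\circ(\Var\otimes\Var)=e_1\otimes e_1$, taken literally, has weight two, which is incompatible with the homogeneity of \eqref{hcoproduct}. The actual coupling has weight one.

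The missing idea is a single splitting of the whole nearby-cycle space in which both monodromies can be read off. The paper obtains it on one general fiber $\PP_\lambda$ with marked points $1,2,4,5$. It chooses intervals $I_{42},I_{25},I_{45}$ forming a triangle with no marked points inside. Then $\intr{45}=\intr{42}+\intr{25}$, and by Proposition~\ref{basic} the map $\intr{42}\oplus\intr{25}$ identifies $(E\otimes F)^{\oplus 2}$ with the full space, with $\mathrm{im}(\var)$ as the diagonal. A loop drawn away from an interval commutes with the pushforward along it, and this gives:
\begin{itemize}
\item \eqref{mat1} for $e_1$, namely $e_{45}\intr{42}=\intr{45}(e_1\otimes 1)$ and $e_{45}\intr{25}=\intr{45}(1\otimes e_1)$, which is exactly $\can$ on a complement;
\item \eqref{mat2} for $e_0$, which is upper triangular with off-diagonal entry $-1\otimes e_1$.
\end{itemize}
Applying $-e_1\circ e_0^{n-1}$ to a diagonal vector $(u,u)$ then yields \eqref{hcoproduct} directly. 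For example, for $n=2$ the mixed term $y_1\otimes y_1$ comes from the entry $-1\otimes e_1$ of $e_0$ followed by the entry $e_1\otimes 1$ of $e_1$. Your special-fiber picture could only be completed by choosing such a splitting on the general fiber and specializing it to both ends, at which point it reduces to the paper's computation.
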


\begin{proof}
The proof is essentially a geometric version of \cite[Sec.~6.3]{DTerPr} and \cite[Sec.~5.2]{Enriquez2021}.

To prove the statement, we describe the action of the transport algebra on the vanishing cycles of the convolution. This reduces to computing the operator $\var$, the action of holonomy on nearby cycles, and the action of $\can$, which is given by Proposition~\ref{image}.

It suffices to prove the statement for sheaves of the form $j_{1*}\LL[1]$, where $\LL$ is a pro-unipotent local system on $\Gms$. Indeed, there is a canonical map
$
\EE \to j_{1*}j_1^*\EE
$
which induces an embedding of vanishing cycles. Thus, we may assume that
\[
\var\colon \Van{1}{\EE} \xrightarrow{\sim} \Nea{1}{\EE}, \qquad
\var\colon \Van{1}{\FF} \xrightarrow{\sim} \Nea{1}{\FF}
\]
are isomorphisms, and moreover that the corresponding local systems are universal unipotent.

We identify nearby cycles with the fiber over a point in the smooth locus of $\con{\EE}{\FF}$. The fiber over such a point is a copy of $\PP$, denoted $\PP_\lambda$, with four marked points, which we label $1,2,4,5$.
Denote by $(p_5^*(\overline{\EE}) \otimes p_4^*(\FF))_\lambda$ the restriction of the sheaf to this fiber.
It is smooth outside these points, it is extended by $!$
to points $1$ and $2$, and by $*$ to points $4$ and $5$.
Our aim is to compute the action of the pro-unipotent completion of the fundamental group of the base, that is, of $\pi_1^{un}(\M{4})$  on  the cohomology of 
this restriction.

Consider the canonical map
\begin{equation}
(p_5^*(\overline{\EE}) \otimes p_4^*(\FF))_\lambda\to j_{1*}j^*_{1}(p_5^*(\overline{\EE}) \otimes p_4^*(\FF))_\lambda,
\label{embed}
\end{equation}
where $j_1$ is the open embedding of the complement of point $1$.
If the monodromy around point $1$ has no kernel, which is the case for extension of universal objects as above, this map induces an emedding of cohomology, respected
by action of holonomy. Thus, one needs to calculate action of the holonomy of cohomology of the target of this map.

Introduce notations:
\begin{gather*}
E=\Van{1}{\EE}=\Nea{1}{\EE}\qquad\quad
F=\Van{1}{\FF}=\Nea{1}{\FF}\\
H=\Ho{-1}(\PP_\lambda, \, j_{1*}j^*_{1}(p_5^*(\overline{\EE}) \otimes p_4^*(\FF))_\lambda)
\end{gather*}

Let us connect points $2$, $4$, $5$ by non-intersecting, non-self-intersecting 
oriented intervals $I_{24}$, $I_{25}$, $I_{45}$ such that they form a triangle without marked points inside. As in Proposition \ref{Ver2}, 
cohomology of the restriction $H^{-1}(I_{ij}, \,i_I^! (p_5^*(\overline{\EE}) \otimes p_4^*(\FF))_\lambda)$ isomorphic to $E\otimes F$.
One can choose these isomorphisms compatibly so that the corresponding identifications of fibers extend over the interior of the triangle.

Introduce notations for corresponding pushforward morphisms:
\begin{equation*}
\begin{tikzcd}[cramped]
\intr{ij}\colon  E\otimes F  \arrow[r]\arrow[d, equal]& H \arrow[d, equal]\\
H^{-1}(I_{ij}, \, i_I^! \,(p_5^*(\overline{\EE}) \otimes p_4^*(\FF))_\lambda)  \arrow[r,"i_{I_{ij}*}"]
& H^{-1}(\PP_\lambda, \, j_{1*}j^*_{1}(p_5^*(\overline{\EE}) \otimes p_4^*(\FF))_\lambda)
\end{tikzcd}
\end{equation*}
Note that 
\begin{equation*}
\intr{45}=\intr{42}+\intr{25}
\end{equation*}

Applying Proposition \ref{basic} to 
the interval $I=I_{42}\cup I_{25}$ we get an isomorphism 
\begin{equation}
\nabla\circ(\intr{42}\oplus \intr{25})\colon \,{(E\otimes F)}^{\oplus 2}\to H,
\label{base}
\end{equation}
where $\nabla\colon H\oplus H\to H$ is the codiagonal.
Our aim is to describe action of holonomy on $H$ in terms of this and other analogous  isomorphisms.

Denote by $X_{ij}$ the element in $\pi^{un}_1(\M{4})$ presented by the loop, corresponding to 
the round of $i$-th point around $j$-th one counterclockwise.
Denote $e_{ij}=\log X_{ij}$.

Firstly compute action of $e_{45}$ (corresponding to $e_1$ in Proposition \ref{StructureTr}) on $H$.
The element $X_{45}$ can be represented by a loop such that the marked points do not intersect the interval $I_{45}$. Therefore, its action commutes with $\intr{45}$, 
so it is true for $e_{45}$.
That is
\begin{equation}
e_{45}\intr{45}=\intr{45}e_{45}=\intr{45}(e_1\otimes1+ 1\otimes e_1)
\label{pr1}
\end{equation}

As it follows from Proposition  \ref{Ver2}, image of 
$\intr{45}$ is the image of $\var\colon\Van{1}{\con{\EE}{\FF}}\stackrel{}{\to}\Nea{1}{\con{\EE}{\FF}}$ in the nearby cycles.
So, by Proposition \ref{image}, 
\begin{equation*}
e_{45}\intr{42}=\intr{45}x_1 \qquad\qquad e_{45}\intr{25}=\intr{45}x_2
\end{equation*}
One may see, that the projection on the second factor in $E\otimes F$ of $x_1$
and projection on the first factor of $x_2$ are zero. In combination with
\eqref{pr1} it follows
\begin{equation*}
e_{45}\intr{42}=\intr{45}\,e_1\otimes 1 \qquad\qquad e_{45}\intr{25}=\intr{45}\,1\otimes e_1
\end{equation*}
This result may be derived from the explicit geometric definition of 
the map $\can$ as well.
Thus, the block matrix of action of $e_1$ in the basis \eqref{base} is
\begin{equation}
e_1=\begin{pmatrix}
e_1\otimes 1 & 1\otimes e_1 \\
e_1\otimes 1 & 1 \otimes e_1
\end{pmatrix}
\label{mat1}
\end{equation}

To compute action of $e_{15}$ (corresponding to $e_0$ in Proposition \ref{StructureTr}), 
observe that
\begin{equation*}
e_{15}\intr{42}=\intr{42}e_{15} \qquad\qquad e_{14}\intr{25}=\intr{25}e_{14}
\end{equation*}
because corresponding elements of $\pi^{un}_1(\M{4})$ 
can be represented by a loop such that the marked points do not intersect corresponding intervals, as above. 
We get
\begin{equation*}
e_{15}\intr{42}=\intr{42}e_{15} =\intr{42}\, e_0\otimes 1
\end{equation*}
and, substituting $e_{15}=-e_{45}-e_{14}$, we get
\begin{equation*}
e_{15}\intr{25}= -e_{45}\intr{25} -e_{14}\intr{25}=
-\intr{45}\,1\otimes e_1-\intr{25}e_{14}=
-(\intr{42}+\intr{25})1\otimes e_1 +\intr{25}(1\otimes e_1+1\otimes e_0)
\end{equation*}
Thus, the matrix of action of $e_0$ in the basis \eqref{base} is
\begin{equation}
e_0=\begin{pmatrix}
e_0\otimes 1 & -1\otimes e_1 \\
0 & 1\otimes e_0
\end{pmatrix}
\label{mat2}
\end{equation}

Recall that image of $\var$ is given by vector $(1,1)$.
Substituting matrices  \eqref{mat1} and \eqref{mat2} in the definition
of generators of $W$ from Proposition \ref{StructureTr},  and applying
Proposition \ref{image}, we complete the proof.
\end{proof}

\begin{rem}
As shown in \cite{DTerPr}, the quotient symmetric monoidal category from Theorem~\ref{Quotient}, equipped with the fiber functor given by vanishing cycles, is equivalent to the category of $W$-modules. The Tannakian formalism then endows $W$ with a Hopf algebra structure.
To relate this Hopf structure to the theorem above, note that, by the explicit construction of the equivalence (\cite[Prop.~5.1]{DTerPr}), every object in the quotient category is equivalent to a sheaf without sections supported at $1$.

\end{rem}

\section{Fox derivative and semi-holonomy}
\subsection{Fox derivative}

Fox derivatives were introduced in \cite{Fox1953}. 
In a geometric context, they also appear as ``lifting maps'' in 
\cite[Th.~II.5.3, Ex.~II.5.3]{KBrown}.
See also \cite[Sec.~2.1]{Massuyeau2013}.

Let $A$ be a $\kf$-algebra with augmentation 
$\varepsilon\colon A\to \kf$. A (left) Fox derivative
is a map $D\colon A\to A$ such that
\begin{equation}
D(ab)=D(a)\aug{b} + a\,D(b) \qquad \text{and} \qquad D(1)=0.
\label{der}
\end{equation}
Left Fox derivatives form a right $A$-module.

For the group algebra of a free group $\kf[X_1^{\pm 1},\dots, X_n^{\pm 1}]$ 
with the standard augmentation, Fox derivatives are determined by their values on generators. The Fox
partial derivative $\pdv{}{X_i}$ is defined by 
\begin{equation*}
\pdv{X_j}{X_i}=\delta_{ij}, \qquad 
\pdv{X_j^{-1}}{X_i}=-\delta_{ij} X_j,
\end{equation*}
where $\delta_{ij}$ denotes the Kronecker symbol.

An explicit formula for this derivative, given in \cite[(2.5)]{Fox1953}, is as follows.
Writing a group element $u \in \kf[X_1^{\pm 1},\dots, X_n^{\pm 1}]$
in the form
\begin{equation*}
 u=u_0X_j^{p_1}u_1 X_j^{p_2}\cdots u_{k-1}X_j^{p_k}u_k,
\end{equation*}
where the reduced words representing $u_i$ do not involve the generator $X_j$, we obtain
\begin{equation}
\pdv{u}{X_j} = \sum_{i=1}^{k} u_0 X_j^{p_1} \cdots u_{i-1} \frac{X_j^{p_i}-1}{X_j-1}.
\label{fox}
\end{equation}

One checks that the map $f \mapsto f - \aug{f}$ is a Fox derivative,
which acts on generators by $X_i \mapsto X_i - 1$.
This implies the fundamental formula of Fox calculus:
\begin{equation}
f - \aug{f} = \sum_i \pdv{f}{X_i}(X_i - 1).
\label{fund}
\end{equation}

A consequence of the Leibniz rule \eqref{der} is that Fox derivatives lower the augmentation degree by one. It follows that the standard Fox derivatives on $\kf[X_1^{\pm 1},\dots, X_n^{\pm 1}]$
extend naturally to the pro-unipotent completion
$\kf\bra{X_1, \dots, X_n}$.

Denote by $e_i = \log X_i$ the logarithmic coordinates.
The corresponding Fox partial derivative $\pdv{}{e_i}$ is defined by
\begin{equation*}
\pdv{e_j}{e_i} = \delta_{ij}.
\end{equation*}

For the same reason as above, we obtain the logarithmic version of the fundamental formula:
\begin{equation}
f - \aug{f} = \sum_i \pdv{f}{e_i} e_i.
\label{log-fund}
\end{equation}

Comparing \eqref{fund} with \eqref{log-fund}, we obtain in particular that
\begin{equation}
\pdv{f}{X_i}(X_i - 1) = \pdv{f}{e_i} e_i,
\label{fund2}
\end{equation}
since both sides define the projection onto the left ideal generated by $(X_i - 1)$, equivalently by $e_i$.

\subsection{Semi-holonomy}
\label{Semi-holonomy}

Recall that in Subsection~\ref{van1} we defined an isomorphism
\[
\begin{tikzcd}[cramped, sep=small]
\ph{I} \colon \Van{1-z}{\FF} \arrow[r] & \Ho{0}(\PP,\, \jj \FF)
\end{tikzcd}
\]
for a perverse sheaf $\FF \in \Shs$, given by shrinking $\mathbb{A}^1$ on the interval. 
In Subsection~\ref{GrRep}, we interpreted this isomorphism as a geometric realization: one associates to a vanishing cycle a cocycle in $H^0(I,\, i_I^! \jj\FF)$ and then pushes it forward via $i_{I*}i_I^! \jj\FF$ to obtain a cocycle in $\Ho{0}(\PP,\, \jj \FF)$.

In this construction, one may replace the pushforward along the standard embedding of the interval by the pushforward along an arbitrary embedding determined by a path $P\colon I \to \Gms$ from the tangential base point $1-z$ to the tangential base point $z$. The result is invariant under homotopy of the path, and thus depends only on the corresponding element of the fundamental groupoid.

Any such homotopy class of paths can be represented as a composition $gI$, where $g \in \fg{\Gms}{1-z}$ and $I$ is the straight path. Denote by
\begin{equation}
\begin{tikzcd}[cramped, sep=small]
\ph{gI} \colon \Van{1-z}{\FF} \arrow[r] & \Ho{0}(\PP,\, \jj \FF)
\end{tikzcd}
\label{semi-holonomy}
\end{equation}
the morphism given by pushforward along this path, and call it the \emph{semi-holonomy} along $g$.

Although for general $g$ this morphism is not given by shrinking, the following theorem shows that it is an isomorphism in the pro-unipotent setting.

\begin{theorem}
For $g \in \fgu{\Gms}{1-z}$ and $\FF \in \Shsu$, the map \eqref{semi-holonomy} is:
\begin{enumerate}
\item\label{1} well-defined, i.e.\ continuous with respect to the pro-unipotent filtration;
\item\label{2} an isomorphism;
\item\label{3} given by the formula
\begin{equation}
\ph{gI}(-)=\ph{I}(-\tilde{g}),
\label{form}
\end{equation}
where $\tilde{g} \in W$ acts on vanishing cycles as in \eqref{transport}, and, with respect to the identification \eqref{ideal},
\begin{equation*}
\tilde{g}=1+\pdv{g}{X_1}(X_1-1)
\end{equation*}
\end{enumerate}
\label{Semi-holonomy-th}
\end{theorem}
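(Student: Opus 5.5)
The strategy is to prove the formula \eqref{form} first for honest group elements $g\in\fg{\Gms}{1-z}$. Items (\ref{1}) and (\ref{2}) will then follow by continuity and a unipotence argument.

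\textbf{Step 1: reduction to a cocycle property.} For a genuine loop $g$, the path $gI$ starts at the tangential base point $1-z$, travels along $g$, and then follows $I$. The class $\ph{gI}(v)$ is the pushforward of the local-cohomology cocycle attached to $v$ along this path. I will decompose it as
\[
\ph{gI}(v)=\ph{I}(v)+c_g(v),
\]
where $c_g(v)$ is the pushforward of a section of $\FF$ along the loop $g$. Here the section is $\var(v)\in\Nea{1-z}{\FF}$ transported by holonomy, which uses Proposition~\ref{hol} and Proposition~\ref{nea2}. Geometrically, an element of $\Ho{0}(\PP,\jj\FF)$ supported on a closed loop through the tangential base point at $1$ is obtained from a nearby-cycle class at $1$. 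Pushing forward along a loop built from an element $h\in\Nea{1-z}{\FF}$ and then applying $\ph{I}^{-1}$ should give $\can(h)$ in the appropriate sense.

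\textbf{Step 2: the Fox cocycle.} I then check that $g\mapsto c_g$ satisfies the Fox rule
\[
c_{gh}=c_g\cdot \aug{h}+\text{(transport of }c_h\text{ by }g).
\]
This holds because composing paths concatenates supports: the second piece is the first piece transported along $g$. By \eqref{der}, $c_g$ is therefore determined by its values on the generators $X_0$ and $X_1$.

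For $X_0$, conjugated by $I$, the loop can be homotoped so that it together with $I$ bounds a region avoiding $1$. Shrinking (Proposition~\ref{basic}) then gives $c_{X_0}=0$.

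For $X_1$, the loop around $1$ contributes the class $\ph{I}$ of $\can\circ\Hol{}\circ\var$ with trivial holonomy. By \eqref{canvar} this is the action of $X_1-1$ in the sense of \eqref{transport}.

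Hence, with \eqref{fox} and the identification \eqref{ideal},
\[
\ph{gI}(v)=\ph{I}\Bigl(v\bigl(1+\pdv{g}{X_1}(X_1-1)\bigr)\Bigr).
\]
This is \eqref{form} for $g\in\fg{\Gms}{1-z}$.

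\textbf{Step 3: extension to the pro-unipotent completion.} Because $\FF\in\Shsu$, the action of $W$ on $\Van{1-z}{\FF}$ factors through a finite-dimensional quotient: the augmentation ideal acts nilpotently on $\Nea{1-z}{\FF}$. The Fox derivative is continuous for the pro-unipotent filtration, since it lowers degree by exactly one. So the right-hand side of \eqref{form} extends continuously to $g\in\fgu{\Gms}{1-z}$, and this extension is taken as the definition, which gives (\ref{1}).

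For (\ref{2}), write $\tilde g=1+\pdv{g}{X_1}(X_1-1)$ and decompose $\pdv{g}{X_1}=\aug{\pdv{g}{X_1}}+\text{(augmentation ideal)}$. This gives
\[
\tilde g=1+\aug{\pdv{g}{X_1}}(X_1-1)+\text{higher terms}.
\]
The operator $X_1-1$ acts on vanishing cycles as $\can\circ\var$, which is nilpotent in the unipotent setting. So right multiplication by $\tilde g$ is unipotent, hence invertible, and composing with the isomorphism $\ph{I}$ of Proposition~\ref{phi} gives (\ref{2}).

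\textbf{Main obstacle.} I expect the hardest step to be the computation in Step 1 and Step 2 of $c_{X_1}$, together with the sign and orientation conventions. One has to identify rigorously the class of a loop around $1$ with $\ph{I}(\can\,\var\,v)$. I would first attempt this using the stratification triangle \eqref{complex} for $(0,1)\subset(0,1]$, comparing the supports of the two cochains directly.
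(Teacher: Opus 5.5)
Your proposal is correct and follows essentially the paper's route. You identify $\can$ with the pushforward along a lasso around $1$, observe that loops around $0$ contribute nothing, assemble the contributions of the loops around $1$ into $\pdv{g}{X_1}$, and deduce (\ref{1}) and (\ref{2}) from the formula; the only difference is organizational, since you obtain the Fox structure from a cocycle rule for $g\mapsto c_g$ on nearby cycles, whereas the paper expands $g$ as a word in $X_1^{\pm1}, X_0^{\pm1}$ and matches the terms against the explicit formula \eqref{fox}.
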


\begin{proof}
({\ref{1}}) This follows from~({\ref{3}}), since the formula is expressed in terms of Fox derivatives, which extend to the pro-unipotent completion.

({\ref{2}}) This also follows from~({\ref{3}}), since the formula shows that $\ph{gI}$ is the sum of the identity and an element of the augmentation ideal.

({\ref{3}}) We first describe the morphisms $\var$ and $\can$ in terms of the geometric representation, following the approach of \cite{Galligo1985, KapSch}. Recall that the nearby cycles of $\FF$ are isomorphic to the vanishing cycles of $j_{1*}j_1^*\FF$. Graphical representation of vanishing cycles of sheaf $j_{1*}j_1^*\FF$
is given by a section of the corresponding local system over the interior of the interval $I$. By Proposition~\ref{nea2}, the morphism $\var$ is given by restriction of such a section to the interior of $I$.

We now describe the morphism $\can$. Nearby cycles are represented by sections over the interior of $I$. To compute the image under $\can$, consider a path consisting of three segments: it starts at the tangential base point $z$, follows $I$ to a point $1-\epsilon$ (for $\epsilon>0$ small), then traverses the circle $|1-z|=\epsilon$ counterclockwise, and finally returns along $I$. Restrict the section corresponding to a nearby cycle to the third segment and extend it along the entire path. The pushforward along this path defines an element of $\Ho{0}(\PP,\,\jj \FF)$, which, by definition, corresponds to $\can$ under the isomorphism $\ph{I}$.

Thus, the composition $\can \circ \var$ sends a vanishing cycle represented by a section over $I$ to a section supported on the loop encircling $1$ as above.

Now write $g \in \fg{\Gms}{1-z}$ as a product of generators, namely the counterclockwise loop around $1$ and the conjugate by $I$ of the counterclockwise loop around $0$. This expresses the difference $\ph{gI} - \ph{I}$ as a sum of contributions corresponding to such elementary loops. One obtains
\begin{equation*}
\ph{X_1^{\varepsilon_1}X_0^{i_1}\cdots X_1^{\varepsilon_k}X_0^{i_k} I}(-)
=
\ph{I}(-)
+
\sum_{i} \ph{I}\bigl(X_1^{\varepsilon_1}X_0^{i_1}\cdots X_0^{i_{i-1}}\,\partial X_1^{\varepsilon_{i}}(-)\bigr),
\end{equation*}
where $\varepsilon_i=\pm 1$, and
\[
\partial X_1^{\varepsilon}=
\begin{cases} 
\can\circ \var & \text{if } \varepsilon=1,\\ 
\can \circ T^{-1}\circ \var & \text{if } \varepsilon=-1.
\end{cases}
\]

Combining this with \eqref{fox}, \eqref{canvar}, and the identification \eqref{ideal}, we obtain the desired formula.
\end{proof}

The following statement explains the geometric meaning of the map $-_Y$ from \cite{DTerPr,DTerICM}.

\begin{prop}
Let $f \in \kf\bra{e_0, e_1}$. Following \cite{DTerPr,DTerICM}, write it in the form $f = 1 + \varphi_0 e_0 + \varphi_1 e_1$ and set
\begin{equation*}
f_Y = 1 + \varphi_1 e_1
\end{equation*}
If $f$ represents $g \in \fgu{\Gms}{1-z}$ in logarithmic coordinates, then $f_Y$ represents $\tilde{g}$ from \eqref{form}.
\label{Y}
\end{prop}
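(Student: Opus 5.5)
The plan is to rewrite the formula of Theorem~\ref{Semi-holonomy-th}(\ref{3}) in logarithmic coordinates using the comparison \eqref{fund2}. By that theorem, under the identification \eqref{ideal}, we have
\[
\tilde{g}=1+\pdv{g}{X_1}(X_1-1).
\]
By \eqref{fund2}, applied to $f$ viewed as the same element of the completed group algebra, this equals
\[
1+\pdv{f}{e_1}e_1 .
\]
The equality is meant in the completed algebra $\kf\bra{X_0,X_1}=\kf\bra{e_0,e_1}$. Under this identification, \eqref{ideal} and \eqref{logideal} describe the same left ideal, so the two descriptions of $W$ agree.

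Next we identify $\pdv{f}{e_1}$ with $\varphi_1$. Since $g$ is group-like, $\aug{f}=1$. The logarithmic fundamental formula \eqref{log-fund} then gives
\[
f=1+\pdv{f}{e_0}e_0+\pdv{f}{e_1}e_1 .
\]
The decomposition $f=1+\varphi_0e_0+\varphi_1e_1$ is unique, because $\kf\bra{e_0,e_1}$ is the completed free algebra: every element of the augmentation ideal is uniquely a sum $a e_0+b e_1$, as one sees by looking at the last letter of each monomial. Hence $\varphi_i=\pdv{f}{e_i}$, and therefore
\[
f_Y=1+\pdv{f}{e_1}e_1=\tilde g .
\]

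The only point needing care is the compatibility of the actions, namely whether $\tilde g$ acts through the ordinary $(X_1-1)$-homotope or through the logarithmic $e_1$-homotope. This is where I expect the main friction. I would address it by noting that the element $\tilde g$ is defined in Theorem~\ref{Semi-holonomy-th} through \eqref{ideal}. Its action on vanishing cycles is $\can\circ\Hol{\pdv{g}{X_1}}\circ\var$ plus the identity. Proposition~\ref{image} (equivalently, \eqref{canvar} and \eqref{Var}) shows that this coincides with $\can\circ\Hol{\pdv{f}{e_1}}\circ\Var$ plus the identity, since
\[
\var\circ\can=X_1-1 \quad\text{and}\quad \Var\circ\can=e_1 ,
\]
and \eqref{fund2} matches the two expressions. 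So $f_Y$ represents $\tilde g$ in the logarithmic identification \eqref{logideal} as well, which completes the argument.
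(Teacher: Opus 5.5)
Your argument is the paper's own: Theorem~\ref{Semi-holonomy-th}(\ref{3}) combined with \eqref{fund2} and \eqref{log-fund}. Your uniqueness-of-decomposition remark supplies the step $\varphi_1e_1=\pdv{f}{e_1}e_1$, which the paper leaves implicit.

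Your final paragraph is not needed, since the statement only asks for the equality $f_Y=\tilde g$ in the common ideal. Its justification is also slightly off: the relations $\var\circ\can=X_1-1$ and $\Var\circ\can=e_1$ only determine the operators after precomposing with $\can$. If you keep that paragraph, argue instead from $\Var=\frac{\log T}{T-1}\circ\var$ together with right cancellation of $e_1$ in \eqref{fund2}.
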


\begin{proof}
The statement follows from \eqref{log-fund}, \eqref{fund2}, and Theorem~\ref{Semi-holonomy-th}.
\end{proof}

The following statement is an analogue of Proposition~\ref{hol}.

\begin{prop}
Let $\LL$ be a pro-unipotent local system on $\Gm \setminus \{1\}$ and let $g \in \fgu{\Gms}{1-z}$. The composition of isomorphisms
\[
\begin{tikzcd}[cramped]
\Nea{1-z}{j_{1*}\LL[1]}
& \Van{1-z}{j_{1*}\LL[1]} \arrow[l, "var"'] \arrow[r, "\ph{gI}"]
& \Ho{0}(\PP,\, j_{\infty!}j_{0*} j_{1*}\LL[1]) \arrow[d,"r"] \\
\Nea{z}{j_{1*}\LL[1]}
& \Van{z}{j_{1*}\LL[1]} \arrow[l, "var"'] \arrow[r, "\ph{gI}"]
& \Ho{0}(\PP,\, j_{\infty!}j_{1*} j_{0*}\LL[1])
\end{tikzcd}
\]
is the holonomy along $gI$.
\label{hol2}
\end{prop}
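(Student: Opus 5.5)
We deduce this from Proposition~\ref{hol} applied to a path homotopic to $gI$, together with the geometric description of $\ph{gI}$ given before Theorem~\ref{Semi-holonomy-th}.

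First assume $g$ lies in the discrete group $\fg{\Gms}{1-z}$. Represent $gI$ by an embedded path $P$ from the tangential base point $1-z$ to $z$. For the sheaf $j_{1*}\LL[1]$, the sheaf $\jj j_{1*}\LL[1]$ is the $*$-extension at both $0$ and $1$ and the $!$-extension at $\infty$. Hence a class supported on $P$ is a section $\sigma$ of $\LL$ over the interior of $P$, exactly as in the proof of Proposition~\ref{hol}. By Proposition~\ref{nea2}, $\var$ at $1$ sends a vanishing cycle to the limit of $\sigma$ at the initial tangential point. The same applies at $0$ after applying $r$: the element of $\Van{z}{j_{1*}\LL[1]}$ whose $\ph{gI}$-image is the $r$-transform of the class supported on $P$ is represented by the same section, now read from the other end. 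Its $\var$ is therefore the limit of $\sigma$ at the tangential base point $z$. The composite thus takes the limit of $\sigma$ at one end of $P$ to its limit at the other end. Since $\sigma$ is a flat section over $P$, this is by definition the holonomy along $P$, which is the holonomy along $gI$. Homotopy invariance of $\ph{gI}$ (noted after \eqref{semi-holonomy}) and of holonomy makes the statement independent of the chosen representative $P$.

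One point needs care. For non-embedded paths, the pushforward is along the map $P\colon I\to\Gms$ rather than a closed embedding, so we cannot literally invoke Proposition~\ref{phi}. We handle this by working on the cover, or equivalently by pulling $\LL$ back to $I$ via $P$: the local cohomology computation of Proposition~\ref{phi} near the endpoints is local and only sees the germs of $P$ at the tangential base points, which agree with those of $I$.

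To pass to the pro-unipotent completion, we use Theorem~\ref{Semi-holonomy-th}: $\ph{gI}(-)=\ph{I}(-\tilde g)$ with $\tilde g=1+\pdv{g}{X_1}(X_1-1)$, which is continuous in $g$ for the pro-unipotent topology. Holonomy $\Hol{gI}=\Hol{I}\circ\Hol{g}$ on a pro-unipotent local system is likewise continuous in $g$. Both sides of the claimed equality are continuous and agree on the dense subgroup $\fg{\Gms}{1-z}$, so they agree on $\fgu{\Gms}{1-z}$.

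As a consistency check, which also gives an alternative algebraic proof, we verify directly that
\[
\var\circ(-\tilde g)=\Hol{g}\circ \var
\]
on $\Van{1-z}{j_{1*}\LL[1]}$, and then apply Proposition~\ref{hol}. Using \eqref{transport} and \eqref{canvar}, $\var(v\tilde g)=\var(v)+\var(v)\pdv{g}{X_1}(X_1-1)$. Here $\var$ is an isomorphism, so this equals $\var(v)$ acted on by $1+\pdv{g}{X_1}(X_1-1)$. We must show this agrees with the action of $g$ on nearby cycles. By the fundamental formula \eqref{fund}, $g-1=\pdv{g}{X_1}(X_1-1)+\pdv{g}{X_0}(X_0-1)$, so it remains to show that the $X_0$-term acts trivially on the relevant nearby cycles at $1$.

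This last point is the step I expect to be the main obstacle. It hinges on matching the conventions for left and right actions in \eqref{transport}, and on how $X_0$, the loop around $0$ conjugated by $I$, interacts with the $*$-extension at $0$ under the reflection $r$. If the conventions do not make the $X_0$-term vanish, the geometric argument above, taking limits of flat sections along $P$, is the primary proof and avoids the issue.
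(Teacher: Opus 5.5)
Your primary geometric argument is correct and is essentially the paper's proof. The paper only says that the argument of Proposition~\ref{hol} carries over: a class supported on the path is a flat section of $\LL$ along it, and the two maps $\var$ read off its limits at the two ends, so the composite is parallel transport along $gI$.

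Your extra care is reasonable, with two remarks. First, only paths that spiral around $1$, run along $I$ and then spiral around $0$ have embedded representatives, so the immersed (pulled-back) version is really the argument and not a side case. Second, the density step also needs continuity in $g$ of the bottom-row semi-holonomy, i.e.\ the analogue of Theorem~\ref{Semi-holonomy-th} at $0$.

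The ``consistency check'', however, is wrong, and the problem is not one of conventions: the identity $\var(v\tilde g)=\Hol{g}(\var(v))$ is false.

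For a counterexample take $g=X_0$. Then $\pdv{X_0}{X_1}=0$, so $\tilde g=1$. But $X_0$ acts nontrivially on $\Nea{1-z}{j_{1*}\LL[1]}$ as soon as $\LL$ has nontrivial monodromy around $0$, e.g.\ when $\LL$ is the restriction to $\Gms$ of the unipotent logarithm local system on $\Gm$.

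The error is the reduction to Proposition~\ref{hol}, whose bottom row is $\ph{I}$. In the present statement the bottom row is the semi-holonomy along the same path read from $0$, exactly as you used it in your geometric argument. That map carries its own Fox-derivative correction with respect to the loop around $0$. Algebraically, the top and bottom corrections fit together through the identity $\tilde g=g\,\bigl(1+\pdv{g^{-1}}{X_0}(X_0-1)\bigr)$, which follows from \eqref{fund} and the Leibniz rule. This is why the paper remarks that Proposition~\ref{hol2} gives another proof of \eqref{fund}.

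With $\ph{I}$ in the bottom row the statement would in fact be false: for $g=X_0$ the composite would be $\Hol{I}$ rather than $\Hol{X_0 I}$. So drop the algebraic route as written, or repair it by including the bottom-row correction. The geometric proof does not depend on it.
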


\begin{proof}
The proof is analogous to that of Proposition~\ref{hol}.
\end{proof}

The above proposition provides another proof of the fundamental relation \eqref{fund} in this setting.

\section{Pentagon equation and double shuffle relations}

\subsection{Pentagon equation}

Denote by $X_{ij}$ the standard generators of $\pi_1(\M{5})$,
corresponding to the loop under which the points i and j exchange counterclockwise while the other points remain fixed.

For an element $f(X_0, X_1)\in \pi_1^{un}(\M{4})$,
we introduce the notation
\begin{equation}
\pi_1^{un}(\M{5})\ni \pen(f)=f(X_{12}, X_{23})f(X_{34}, X_{45})f(X_{51}, X_{12})f(X_{23}, X_{34})f(X_{45}, X_{51}).
\label{assoc}
\end{equation}
Following \cite{Drin}, the relation
\begin{equation}
\pen(f)=1
\label{pent}
\end{equation}
is called the \emph{pentagon equation} for $f$.

We say that $f(X_0, X_1)\in \pi_1^{un}(\M{4})$
is \emph{symmetric} if
\begin{equation}
f(X_0, X_1)=f(X_1, X_0)^{-1}.
\label{sym}
\end{equation}

\begin{prop}
If $f(X_0, X_1)\in \pi_1^{un}(\M{4})$ satisfies the pentagon equation
\eqref{pent}, then it is symmetric.
\label{sympent}
\end{prop}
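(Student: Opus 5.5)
We prove the statement, due to Furusho \cite{Fu}, by restricting the pentagon equation \eqref{pent} along a forgetful map and reading off the symmetry relation \eqref{sym}. We use that $\pi_1^{un}(\M{5})$ is the pro-unipotent completion of the pure braid-type group generated by the $X_{ij}$, subject to the standard relations: for each $i$, $\prod_{j\ne i} X_{ij}=1$ in logarithmic form $\sum_{j\ne i} e_{ij}=0$, and $X_{ij}$ commutes with $X_{kl}$ for disjoint pairs. We then apply the forgetful homomorphism $p_{k*}\colon \pi_1^{un}(\M{5})\to\pi_1^{un}(\M{4})$ induced by $p_k$. Under $p_k$, the generators $X_{ik}$ go to $1$, and the remaining $X_{ij}$ go to the corresponding generators $X_{ij}$ of $\pi_1^{un}(\M{4})$. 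In $\M{4}$ these satisfy $X_{ij}=X_{kl}$ for $\{i,j,k,l\}$ the four remaining labels, together with $X_{ab}X_{ac}X_{ad}=1$ (up to the standard tangential base point conventions). The group is free on two of them, identified with $X_0$ and $X_1$.

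The main step is to choose $k$ so that most factors of $\pen(f)$ collapse. Take $k=1$: then $X_{12}\mapsto 1$ and $X_{51}\mapsto 1$. We use that $f$ is group-like with constant term $1$ and that $f(1,Y)=f(X,1)=1$. The second property holds because $f$ lies in the pro-unipotent completion of the free group on $X_0,X_1$ and may be normalized to lie in the commutator part, as is standard for associators; if the paper's convention does not guarantee this, we first use the abelianization of \eqref{pent} to deduce that the linear terms of $\log f$ vanish. With this, the factors $f(X_{12},X_{23})$, $f(X_{51},X_{12})$ and $f(X_{45},X_{51})$ become $f(1,\cdot)$, $f(1,1)$ and $f(\cdot,1)$, all equal to $1$. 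We are left with
\[
f(\bar X_{34},\bar X_{45})\,f(\bar X_{23},\bar X_{34})=1
\]
in $\pi_1^{un}(\M{4})$ on the labels $2,3,4,5$. Here $\bar X_{45}=\bar X_{23}$, so setting $a=\bar X_{34}$ and $b=\bar X_{23}$ we get $f(a,b)f(b,a)=1$. Since $a,b$ freely generate and correspond to $X_0,X_1$ under a suitable identification, this is exactly \eqref{sym}.

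The main obstacle is bookkeeping of base points. The loops $X_{ij}$ in \eqref{assoc} are based at different tangential base points in $\M{5}$, and the factors of $\pen(f)$ are transported by paths. One has to check that after applying $p_{1*}$ the surviving factors land on a common base point and that the images really are the free generators $a$ and $b$ in the stated order, rather than conjugates or inverses. We will handle this by working in Drinfeld's groupoid formulation, where the pentagon equation is a relation among paths between tangential base points near the five corners of $\Ms{5}$. Under $p_1$, two edges of the pentagon contract and the remaining loop becomes the composition of the path from the tangential point $0$ to $1$ and back in $\M{4}$; this is exactly the statement $f(X_0,X_1)f(X_1,X_0)=1$. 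If the direct algebraic check of the generator images becomes messy, we fall back on this picture.
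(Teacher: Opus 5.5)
Your strategy is the paper's: push \eqref{pent} forward along a forgetful map, let three of the five factors collapse, and read off \eqref{sym} from the two that survive. The paper's proof simply asserts \eqref{pen-sym}.

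The gap is in how you justify the collapse. The identity $f(1,Y)=f(X,1)=1$ does not hold for an arbitrary $f$ satisfying \eqref{pent}, and your fallback argument is false. Write $c_0,c_1$ for the coefficients of $e_0,e_1$ in $\log f$. The abelianization of \eqref{pent} is $(c_0+c_1)\,(e_{12}+e_{23}+e_{34}+e_{45}+e_{51})=0$, and this cyclic sum is nonzero in $H_1(\M{5})$. To see this, take a combination $\sum_i a_i\sum_{j\neq i}e_{ij}$ of the defining relations; its coefficient on $e_{ij}$ is $a_i+a_j$. Making it $1$ on the five adjacent pairs forces $a_i=1/2$ for all $i$, which gives $1\neq 0$ on the non-adjacent pairs. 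So the abelianization only yields $c_0+c_1=0$.

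This is sharp. Since $X_{i+3,i+4}$ commutes with $f(X_{i,i+1},X_{i+1,i+2})$, the element $g=X_1^{c}fX_0^{-c}$ satisfies $\pen(g)=(X_{23}X_{45})^{c}\,\pen(f)\,(X_{23}X_{45})^{-c}$. Hence pentagon solutions with nonzero linear terms exist. Your remark about normalizing $f$ ``as is standard for associators'' changes $f$, and you give no argument that \eqref{pent} and \eqref{sym} transfer across the normalization.

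The same issue appears in your geometric fallback. Under $p_1$ three edges collapse, not two. Moreover, they collapse to powers $X^{c}$ of local monodromy, not to trivial paths.

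The repair is short. Because $f$ is group-like, setting one variable to $1$ kills every bracket in $\log f$, so $f(1,Y)=Y^{c_1}$ and $f(X,1)=X^{c_0}$. Hence, in your notation,
\[
p_{1*}\pen(f)=b^{c_1}f(a,b)f(b,a)\,b^{c_0}.
\]
We have $c_0=-c_1$, either from the abelianization of \eqref{pent} or from abelianizing this identity itself. So the right-hand side is a conjugate of $f(a,b)f(b,a)$, and $\pen(f)=1$ forces $f(a,b)f(b,a)=1$, which is \eqref{sym}. The paper's identity \eqref{pen-sym} should likewise be read as holding up to this conjugation; the conclusion is unaffected.
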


\begin{proof}
Applying the map
$p_i\colon \M{5}\to \M{4}$ forgetting the $i$-th point to \eqref{assoc}, we obtain
\begin{equation}
p_{i*}\pen(f)=f(X_0, X_1)f(X_1, X_0).
\label{pen-sym}
\end{equation}
If $f$ satisfies the pentagon equation, then $\pen(f)=1$, hence
$p_{i*}\pen(f)=1$. Therefore
\[
f(X_0, X_1)f(X_1, X_0)=1,
\]
which is exactly \eqref{sym}.
\end{proof}

\begin{rem}
Symmetry is the simplest case of hexagon relations, see \cite{Drin}.
According to  \cite{Fu}, hexagon relations are implied by the pentagon equation.
\end{rem}

%
%
%

The composition of $f\in \fgu{\M{4}}{1-z}$ with the interval $I\in \pau{\M{4}}{1-z}{z}$ gives a path $fI$ from the tangential base point $1-z$ to $z$ in the pro-unipotent fundamental groupoid of $\M{4}$. 
The pentagon equation is equivalent to the vanishing of the composition of five paths in the pro-unipotent fundamental groupoid of $\M{5}$, obtained as the images of this path under five different tangential embeddings of $\M{4}$ into $\M{5}$ (see \cite{Drin} for details).

The following proposition reformulates this in terms of the holonomy of the local system corresponding to the relative fundamental groupoid.

We use the notation from Subsection~\ref{moduli}. Denote by
\begin{equation*}
F_\lambda=\{(0, \infty, 1, t_1, \lambda t_1)\mid t_1\neq 0, \infty, 1, 1/\lambda\}
\end{equation*}
the fiber of the projection $p_3\colon \M{5}\to \M{4}$ over $(0, \infty, 1, \lambda)\in \M{4}$.

Let $t_2-1$ and $1-t_1$ be tangential base points at
\begin{equation*}
(0, \infty, 1, 1/\lambda, 1)
\qquad\text{and}\qquad
(0, \infty, 1, 1, \lambda),
\end{equation*}
which project to $z-1$ and $1-z$ under $p_4$ and $p_5$, respectively.

The space of pro-unipotent paths in $F_\lambda$ from $t_2-1$ to $1-t_1$
forms a pro-unipotent local system
$
\kf\langle \Pau{F_\lambda}{t_2-1}{t_1-1} \rangle
$
over $\M{4}$.

Let $f\in \fgu{\M{4}}{1-z}$, and let $fI$ be the corresponding pro-unipotent path from $1-z$ to $z$. Consider paths between   marked points $4$ and $5$ in the fibers of the projection
$
p_3\colon \M{5}\to \M{4}
$
over $0$ and $1$. The first path is obtained by composing the images of $fI$ under the identifications \eqref{left1} and \eqref{left2} of the components of the singular fiber with $\M{4}$. The second path is the image of $fI$ under the identification \eqref{right2}.

Denote by $P_0$ and $P_1$ the corresponding nearby cycles of
$
\kf\langle \Pau{F_\lambda}{t_2-1}{t_1-1} \rangle
$
over $1-z$ and $z$, respectively, uniquely determined by the condition that they specialize to these two paths.

\begin{prop}
For a symmetric $f\in \fgu{\M{4}}{1-z}$, the nearby cycles $P_0$ and $P_1$ as above are related by holonomy along $fI$ if and only if $f$ satisfies the pentagon equation \eqref{pent}.
\label{Holonomy1}
\end{prop}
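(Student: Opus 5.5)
The plan is to reduce the statement to Drinfeld's description of the pentagon equation as the triviality of a closed loop of five paths in the pro-unipotent fundamental groupoid of $\M{5}$ with tangential base points at the five corners of the associahedron (pentagon) $\Ms{5}$. Write $\Pi_\lambda=\kf\langle \Pau{F_\lambda}{t_2-1}{t_1-1}\rangle$. The local system $\Pi$ over $\M{4}$ is the fibrewise path local system of $p_3$. For any pro-unipotent path $\gamma$ in the base, holonomy of $\Pi$ along $\gamma$ is computed in the total space $\M{5}$ as conjugation: a fibre path $P$ is sent to $\sigma_{\mathrm{end}}^{-1}\, \tilde\gamma_{\mathrm{end}} P\, \tilde\gamma_{\mathrm{start}}^{-1}\,\sigma_{\mathrm{start}}$. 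Here $\tilde\gamma_{\mathrm{start}}$ and $\tilde\gamma_{\mathrm{end}}$ are the lifts of $\gamma$ through the two marked sections (points $4$ and $5$ of the fibre, i.e.\ the tangential base points $t_2-1$ and $1-t_1$), and the $\sigma$'s are the trivializations. Compatibility of composition of paths with holonomy (the groupoid analogue of Proposition~\ref{hol2}) lets us work with pro-unipotent paths rather than genuine ones.

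First I will identify the nearby cycles $P_0$ and $P_1$ as paths in $\M{5}$ between tangential base points near the degenerate corners. Over $1-z$ (the fibre at $1$), $P_1$ is the image of $fI$ under the boundary inclusion \eqref{right2}. This is one edge of Drinfeld's pentagon, namely one of the five factors of \eqref{assoc}, for instance $f(X_{45},X_{51})$ up to the conventions of \eqref{right2}. Over $z$ (the fibre at $0$), $P_0$ is the concatenation of the images of $fI$ under \eqref{left1} and \eqref{left2}. These are two further edges of the pentagon, meeting at the node $(15)(3)(24)$.

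Next I will transport $P_1$ along $fI$ in the base. Lifting $fI$ to the total space via the sections through points $4$ and $5$ produces two more edges of the pentagon. Their vertices sit at the divisors of type $(45)$ and $(15)$-$(24)$ degenerations, and in the boundary coordinates these edges are images of $f$ under the remaining two embeddings of $\M{4}$. Thus the transported class equals $(\text{edge})\,P_1\,(\text{edge})^{-1}$. The equality $P_0=\Hol{fI}(P_1)$ is then equivalent to the five edges composing to the identity loop in $\pi_1^{un}(\M{5})$, i.e.\ to $\pen(f)=1$ after cyclic rotation.

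The symmetry hypothesis \eqref{sym} is needed for orientations. The lifted base path and some fibre paths traverse their edges in the direction opposite to the one in \eqref{assoc}, and this introduces factors $f(X_1,X_0)$ in place of $f(X_0,X_1)^{-1}$. By \eqref{sym} these agree, so the resulting loop is exactly a conjugate of $\pen(f)$. Since conjugation preserves triviality, both directions of the equivalence follow.

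The main obstacle is the bookkeeping in the identification step. One must check carefully that the coordinates \eqref{left1}, \eqref{left2}, \eqref{right2}, together with the tangential lifts along the two sections, give precisely the five edges with the generators $X_{ij}$ in the cyclic order of \eqref{assoc}, and that the specialization of a path to the nodal fibre really is the composition of the two component paths at the node. I will first verify this on the abelianization, comparing the coefficients of $X_{ij}$ after projection by the $p_{i*}$ as in \eqref{pen-sym}, to fix orientations. I will then argue that the combinatorics is forced by the dihedral symmetry of the pentagon.
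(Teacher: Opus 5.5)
Your argument is essentially the paper's proof: you compute the holonomy of the fibrewise path local system by conjugating the fibre path with the lifts of $fI$ along the two sections, identify the resulting five paths with the edges of Drinfeld's pentagon, and use the symmetry \eqref{sym} to correct the orientations of the reversed edges. The one point the paper states and you leave implicit is the ``if'' direction: $\pen(f)=1$ only gives equality of $P_0$ and the transported $P_1$ after pushing forward to $\M{5}$, so you also need the map from pro-unipotent paths in the fibre $F_\lambda$ to pro-unipotent paths in $\M{5}$ to be injective.
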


\begin{proof}
Let $p\colon X\to Y$ be a Serre fibration and $s_1,s_2\colon Y\to X$ two sections. Then one has a local system of (pro-unipotent) paths whose fiber over $y\in Y$ is $\pa{p^{-1}(y)}{s_1(y)}{s_2(y)}$. Denote by $i_y\colon p^{-1}(y)\to X$ the inclusion.

For $P\in\pa{p^{-1}(y_1)}{s_1(y_1)}{s_2(y_1)}$ and $f\in\pa{Y}{y_1}{y_2}$, the image in $\pa{X}{s_1(y_2)}{s_2(y_2)}$ of the holonomy of $P$ along $f$ is given by
\begin{equation}
i_{y_2*}\Hol{f} (P)=s_{1*}(fI)^{-1}\circ i_{y_1*} P\circ s_{2*} fI.
\label{formula}
\end{equation}

In our setting, with $X=\M{5}$ and $Y=\M{4}$, the map $i_*$ is injective, so \eqref{formula} determines the holonomy.
Combining this formula with the symmetry condition \eqref{sym}, one obtains the pentagon equation \eqref{pent}.
\end{proof}

\subsection{Homological pentagon equation}

Let $\mathbf{E}$ and $\mathbf{F}$ be pro-unipotent representations
of $\pi_1(\M{4})$, defining local systems on $\M{4}$,
and let $f\in\pi^{un}_1(\M{4})$ be symmetric.

Then by \eqref{pen-sym}, the element $\pen(f)$
acts trivially on $p^*_4\mathbf{E}\otimes p_5^*\mathbf{F}$.
Consider the pro-unipotent loop in $\M{5}$, corresponding to 
$\pen(f)\in\pi_1^{un}(\M{5})$, see \eqref{assoc}. Because $f$ is symmetric,
$p_{3*}\pen(f)=1$ by \eqref{pen-sym}, so 
this loop can be represented by a loop lying in a general fiber
$F_\lambda$ of projection $p_3$.
Since the monodromy of $p^*_4\mathbf{E}\otimes p_5^*\mathbf{F}$ along this loop is trivial, this loop  defines a homology  class with coefficients in the local system 
\begin{equation*}
[\pen(f)]_{p^*_4\mathbf{E}\otimes p_5^*\mathbf{F}}\in H_1(F_\lambda,\, p^*_4\mathbf{E}\otimes p_5^*\mathbf{F})
\end{equation*}
We say that a symmetric element $f$ satisfies the \emph{homological pentagon equation}
if this class vanishes.

This condition has a simple group theoretic meaning.

\begin{prop}
Denote the intersection of kernels of projections $p_i\colon \pi_1^{un}(\M{5})\to \pi_1^{un}(\M{4})$
\begin{equation*}
K = \bigcap_{i=3,4,5}\ker p_{i*} 
\end{equation*}
A symmetric $f \in \pi_1^{un}(\M{4})$
satisfies  the homological pentagon equation if and only if
\begin{equation*}
\pen(f)\in [K, K]
\end{equation*}
\label{Hopent1}
\end{prop}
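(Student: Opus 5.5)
Since $p_{3*}\pen(f)=1$, the element $\pen(f)$ lies in $\ker p_{3*}$, which is the pro-unipotent completion $\pi=\pi_1^{un}(F_\lambda)$ of the free fundamental group of the fiber. Here $F_\lambda$ is $\PP$ minus four points, so $\pi_1(F_\lambda)$ is free on three generators. I will interpret everything inside $\pi$. The local system $p_4^*\mathbf{E}\otimes p_5^*\mathbf{F}$ restricted to $F_\lambda$ is a representation of $\pi$ that factors through the map $\pi\to \pi_1^{un}(\M{4})\times\pi_1^{un}(\M{4})$ induced by $(p_{4*},p_{5*})$. Let $N$ be the kernel of this map. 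Then $N=K$, because $K=\ker p_{3*}\cap\ker p_{4*}\cap\ker p_{5*}$.

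The key tool is the standard description of the homology of a free group with coefficients in a module on which a normal subgroup acts trivially. For a free (pro-unipotent) group $\pi$, a normal subgroup $N$, and $Q=\pi/N$, a Hopf-type (Lyndon/Fox) exact sequence gives
\[
0\to H_1(\pi,\kf[Q])\to \cdots
\]
Concretely, $H_1(\pi,\kf\langle Q\rangle)\cong H_1(N,\kf)=N^{\ab}\otimes\kf$ by Shapiro's lemma. For $x\in N$, the class of the loop $x$ with coefficients in a $Q$-module $M$, evaluated at $m\in M$, is the image of $x\otimes m$ under
\[
N^{\ab}\otimes_{Q} M\to H_1(\pi,M).
\]
I will use Fox calculus, as in Section 6, to make this explicit: the class of $x$ in $H_1(\pi, M)\subset M^{3}$ (chains on the free generators) is $(\pdv{x}{X_i})_i$ acting on $M$. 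This is the identification between relation modules and Fox derivatives: the map $N^{\ab}\to \bigoplus_i \kf\langle Q\rangle$, $x\mapsto (\overline{\pdv{x}{X_i}})$, is injective. This is the classical Magnus embedding / Blanchfield–Lyndon theorem, valid also after pro-unipotent completion.

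The proof of Proposition \ref{Hopent1} then splits into two directions. If $\pen(f)\in[K,K]$, then its image in $K^{\ab}$ vanishes, so its Fox derivatives vanish modulo the ideal of $K$. Hence the class vanishes for every $\mathbf{E},\mathbf{F}$. Conversely, take $\mathbf{E},\mathbf{F}$ to be the universal pro-unipotent representations $\kf\langle\pi_1^{un}(\M{4})\rangle$. Then $M=\kf\langle Q\rangle$ is the regular module, where I use that $Q$ is the full product $\pi_1^{un}(\M{4})^2$, which follows from the surjectivity of $(p_{4*},p_{5*})$ on the fiber. For this $M$, the class is exactly the image of $\pen(f)$ in $K^{\ab}\otimes\kf$ under the injective Magnus map. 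Vanishing of the class therefore forces $\pen(f)\in[K,K]$, with the commutator subgroup understood in the pro-unipotent sense, i.e.\ the closure.

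The main obstacle is the pro-unipotent version of the injectivity $K^{\ab}\hookrightarrow \bigoplus_i\kf\langle Q\rangle$ (Lyndon's identity $H_1(K)\cong H_1(\pi,\kf[Q])$ and $H_2(\pi)=0$). I would handle it by working with completed group algebras of free pro-unipotent groups. Their augmentation ideal is free as a module, which gives the resolution $0\to \bigoplus_i \kf\langle\pi\rangle(X_i-1)\to\kf\langle\pi\rangle\to\kf\to0$, via the fundamental formula \eqref{fund}. Tensoring this resolution with $\kf\langle Q\rangle$ computes $H_1(\pi,\kf\langle Q\rangle)$ as the kernel of $\bigoplus_i\kf\langle Q\rangle\to\kf\langle Q\rangle$. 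Shapiro's lemma identifies this kernel with $H_1(K)=K^{\ab}$, giving the required injectivity.
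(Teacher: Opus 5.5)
\paragraph{Overall.} Your strategy is the paper's. You reduce to the universal local system and identify $H_1(F_\lambda,-)$ with $K^{\ab}$ via Hochschild--Serre/Shapiro, and you make this explicit with Fox derivatives and the Magnus--Blanchfield embedding. However, the step where you claim that $Q$ is all of $\pi_1^{un}(\M{4})^2$ is false.

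\paragraph{Why the surjectivity claim fails.} In the coordinate $t_1$ on $F_\lambda=\PP\setminus\{0,1,1/\lambda,\infty\}$ one has $p_5(t_1)=t_1$ and $p_4(t_1)=\lambda t_1$. So, up to conjugation, the loops around $t_1=0,1,1/\lambda$ go under $(p_{4*},p_{5*})$ to $(X_0,X_0)$, $(1,X_1)$ and $(X_1,1)$. On abelianizations the image is $3$-dimensional inside a $4$-dimensional space. This is already forced by $\dim H_1(F_\lambda)=3<4=\dim H_1(\M{4}\times\M{4})$. In particular $(X_0,1)\notin Q$. Hence $M=\kf\langle\pi_1^{un}(\M{4})^2\rangle$ is not the regular $\kf\langle Q\rangle$-module, and Shapiro's lemma does not apply to $M$ as you state it: $H_1(F_\lambda,M)$ is not literally $K^{\ab}$. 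The paper's one-line proof passes over the same point when it ``identifies'' this $H_1$ with the abelianization of the kernel.

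\paragraph{Repair.} The conclusion survives, and your Fox-calculus setup repairs the step directly.
\begin{enumerate}
\item The inclusion of the closed pro-unipotent subgroup $Q\subset\pi_1^{un}(\M{4})^2$ induces an injection $\kf\langle Q\rangle\hookrightarrow M$ of $\kf\langle\pi\rangle$-modules sending $1$ to $1$.
\item Because $\pi$ is free, your length-one resolution computes $H_1(\pi,-)$ as the kernel of $(-)^{3}\to(-)$. This is left exact, so $H_1(\pi,\kf\langle Q\rangle)\to H_1(\pi,M)$ is injective.
\item The class of $\pen(f)$ with coefficient $1\in M$ is the image of the class with coefficient $1\in\kf\langle Q\rangle$. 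That class is the reduced Fox vector $\bigl(\overline{\partial\,\pen(f)/\partial X_i}\bigr)_i\in\kf\langle Q\rangle^{3}$.
\item Shapiro/Magnus--Blanchfield applies to the genuinely regular module $\kf\langle Q\rangle$, so this vector vanishes if and only if $\pen(f)\in\overline{[K,K]}$.
\end{enumerate}
Your forward direction is fine as written. With this correction the converse is complete at the same level of rigor as the paper; the pro-unipotent Shapiro step you flag remains, and the paper also leaves it implicit.
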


\begin{proof}
It suffices to prove the statement for the universal pro-unipotent local system
$\Univer{4}$ on $\M{4}$.
The homological pentagon equation asserts that the class
$$
[\pen(f)]_{p_4^*\Univer{4}\otimes p_5^*\Univer{4}}\in H_1(F_\lambda, \, p_4^*\Univer{4}\otimes p_5^*\Univer{4})
$$
vanishes.

Since $\pi_1^{un}(F_\lambda)=\ker p_{3*}$, a standard application of the
Hochschild--Serre spectral sequence identifies
$
H_1\bigl(F_\lambda,\,
p_4^*\Univer{4}\otimes p_5^*\Univer{4}\bigr)
$
with the abelianization of
\begin{equation*}
\ker (p_{4*}\times p_{5*}) \colon
\pi_1^{un}(F_\lambda)\to
\pi_1^{un}(\M{4})\times\pi_1^{un}(\M{4}),
\end{equation*}
where $p_{4*}$ and $p_{5*}$ denote the restrictions of the corresponding maps on
$\pi_1^{un}(F_\lambda)\subset \pi_1^{un}(\M{5})$.
This proves the claim.

Conversely, since the statement holds for the regular representation, it follows for every representation.
\end{proof}

Theorem \ref{Theorem} and  Proposition \ref{HolI} establish certain compatibility 
between semi-holonomy along $I$, i.e. isomorphisms $\ph{I}$ from \eqref{equality},
and multiplicative convolution. The following proposition
states that homological pentagon equation for $f$ is equivalent to the same
compatibility for $\ph{fI}$.

Recall that in Subsections \ref{blowup} and \ref{gluing} we construct 
for $\EE, \FF\in \Shs$
maps from $\Van{1}{\EE}\otimes\Van{1}{\FF}$ to
nearby cycles of $\con{\EE}{\FF}$  at 
$1$ and $0$ respectively. 
This construction produces cohomology classes
on the component of the singular fiber in the first case, and on both components of the singular fiber, followed by gluing, in the second.
This cohomology class was identified with the product of the vanishing cycles
via isomorphism $\ph{I}$. One may reproduce this construction,
replacing isomorphism $\ph{I}$ with $\ph{fI}$ as in Subsection \ref{Semi-holonomy}.
Thus, these classes are represented by sections supported on the paths
$P_1$ and $P_0$ from the previous subsection.

\begin{prop}
For any $\EE, \FF\in \Shs$ without sections supported at $1$ and a
symmetric $f \in \pi_1^{un}(\M{4})$
the maps constructed above from $\Van{1}{\EE}\otimes\Van{1}{\FF}$ to
nearby cycles of $\con{\EE}{\FF}$  at 
$1$ and $0$ respectively commute with holonomy along $fI$,
connecting nearby cycles at $1$ and $0$, if and only if $f$
satisfies the homological pentagon equation.
\label{Hopent2}
\end{prop}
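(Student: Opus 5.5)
The plan is to reduce the compatibility statement to a comparison of two homology classes in a general fiber $F_\lambda$ of $p_3$, and to recognise their difference as the class $[\pen(f)]$.

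\textbf{Step 1: a bilinear pairing.} We work with the generic fiber $\PP_\lambda$ as in the proof of Theorem~\ref{Hcoproduct}. There the nearby cycles of $\con{\EE}{\FF}$ are identified with
\[
H=\Ho{-1}\bigl(\PP_\lambda,\, (p_5^*(\overline{\jj\EE})\otimes p_4^*(\jj\FF))_\lambda\bigr).
\]
A vanishing cycle $u\otimes v\in \Van{1}{\EE}\otimes\Van{1}{\FF}$ is represented by a section of the local system along a path. Pushing it forward defines a pairing
\[
H_1^{BM}\bigl(F_\lambda\ \mathrm{rel\ ends},\, p_4^*\mathbf{E}\otimes p_5^*\mathbf{F}\bigr)\times(\Van{1}{\EE}\otimes\Van{1}{\FF})\longrightarrow H,
\]
where $\mathbf E,\mathbf F$ are the pro-unipotent local systems underlying $\EE,\FF$.

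We then replace $\ph{I}$ by $\ph{fI}$ in the constructions of Subsections~\ref{gluing} and~\ref{blowup}. The two maps in the statement become the pairings with the classes of the paths $P_1$ and $P_0$ of the previous subsection. To make this precise, we redo the proofs of Propositions~\ref{Map1} and~\ref{square} with $I$ replaced by $fI$. Proposition~\ref{hol2} supplies the analogue of Proposition~\ref{hol}, and Theorem~\ref{Semi-holonomy-th} guarantees that the construction is well defined and continuous in the pro-unipotent setting.

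\textbf{Step 2: the holonomy discrepancy.} Next we transport $P_0$ along $fI$ in the local system $\kf\langle \Pau{F_\lambda}{t_2-1}{t_1-1}\rangle$ and compare the result with $P_1$. We use formula \eqref{formula} from the proof of Proposition~\ref{Holonomy1}, together with the symmetry \eqref{sym}. These show that $\Hol{fI}(P_0)$ and $P_1$ differ by precomposition with a loop in $F_\lambda$ representing $\pen(f)$. This loop lies in the fiber because $p_{3*}\pen(f)=1$ by \eqref{pen-sym}.

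Pushing forward to homology with coefficients in $p_4^*\mathbf{E}\otimes p_5^*\mathbf{F}$, the monodromy of $\pen(f)$ is trivial, again by \eqref{pen-sym} applied to $p_4$ and $p_5$. Hence the difference of the two pairings equals the pairing with the closed class $[\pen(f)]_{p^*_4\mathbf{E}\otimes p_5^*\mathbf{F}}$. The "if" direction follows immediately: if this class vanishes, the two maps commute with holonomy along $fI$.

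\textbf{Step 3: the converse, which is the main obstacle.} It remains to show that if the pairing of $[\pen(f)]$ with all vanishing cycles is zero for all $\EE,\FF$ without sections supported at $1$, then the class itself vanishes. We reduce to universal objects: take $\EE=\FF=j_{1*}\Univer{4}[1]$, as in the proof of Proposition~\ref{Hopent1}. For these, $\var$ is an isomorphism.

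For universal objects, the map \eqref{embed} embeds the cohomology of the $!/\ast$-extended sheaf into $H$. The argument of Proposition~\ref{Ver2} and the basis \eqref{base} then identify the pairing
\[
H_1(F_\lambda,\,p_4^*\Univer{4}\otimes p_5^*\Univer{4})\otimes \Van{1}{\EE}\otimes\Van{1}{\FF}\to H
\]
with the natural map from closed homology to cohomology of the $!$-extension at points $1,2$ and the $*$-extension at points $4,5$ on $\PP_\lambda$.

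The key point to verify is that this map is injective on closed classes. I would prove this by Verdier duality together with Proposition~\ref{basic}. Specifically, I would show that the kernel is spanned by classes of small loops around the $!$-points, and that those classes are already zero with universal coefficients because the monodromy around point $1$ has no kernel.

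Once injectivity is established, vanishing of the pairing forces $[\pen(f)]=0$, which is the homological pentagon equation. Combined with Proposition~\ref{Hopent1}, this gives the equivalence for arbitrary pro-unipotent coefficients.
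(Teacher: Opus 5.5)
Your route is the paper's. You reduce to universal objects $j_{1*}L[1]$ and use \eqref{formula} with \eqref{sym} to see that $\Hol{fI}(P_0)$ and $P_1$ differ by a loop in $F_\lambda$ representing $\pen(f)$. The difference of the two maps is then a cocycle supported on that loop, which you compare with the homology class by Verdier duality. You are right, and more careful than the paper, that the converse needs an injectivity statement. The paper treats the difference as a class in $H^1(\PP_\lambda, j_!\LL)\cong H_1(F_\lambda;\LL)$. It passes over the fact that the actual target is the cohomology of the extension that is $*$ at the points $4,5$.

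However, you describe that key step incorrectly. Put $\LL=p_4^*\mathbf{E}\otimes p_5^*\mathbf{F}$ on $F_\lambda$, let $j\colon F_\lambda\to\PP_\lambda$ be the inclusion, and let $\mathcal{S}$ be the extension of $\LL$ by $!$ at $1,2$ and by $*$ at $4,5$. The triangle $j_!\LL\to\mathcal{S}\to\bigoplus_{b=4,5}(j_*\LL)_b$, with the last term supported at the points $b$, shows the following. The kernel of
\[
H_1(F_\lambda;\LL)\cong H^1(\PP_\lambda,j_!\LL)\to H^1(\PP_\lambda,\mathcal{S})
\]
is the image of $\bigoplus_{b=4,5}\LL_b^{T_b}$. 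So it is spanned by small loops around the $*$-points $4$ and $5$ carrying monodromy-invariant coefficients, not around the $!$-points; loops around $!$-points are not in this kernel in general. The condition that the monodromy around point $1$ has no kernel only gives injectivity of the second map \eqref{embed}, which removes invariants at point $1$. It says nothing about points $4$ and $5$, so as written you would be killing the wrong classes.

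The fix uses the same mechanism. For universal coefficients, the monodromy of $\LL$ at $4$ and at $5$ is $X_1^{\pm1}$ acting on a single factor of the completed tensor product of two copies of $\kf\bra{X_0,X_1}$. Since $X_1-1$ is a non-zero-divisor there, these invariants vanish and the composite map to $H$ is injective. With that correction your converse goes through. The tools you need are this triangle and Poincaré duality $H^1_c(F_\lambda;\LL)\cong H_1(F_\lambda;\LL)$, not Proposition~\ref{basic}.
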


\begin{proof}
The arguments below essentially follow those of the proof of Proposition
\ref{gluing}.

As before, one can consider only perverse sheaves isomorphic to 
$j_{1*}L[1]$ for a local system $L$ on $\Gms$.
We may assume that this local system is the universal pro-unipotent one
as in the proof of the previous proposition. 

The difference between the constructed nearby cycle at $1$ and 
the image of nearby cycle at $0$ under holonomy along $fI$
in the fiber $F_\lambda$ infinitesimally close to $1$
is a cohomology class of the convolution of local systems, extended by $!$ to $\PP$. It is represented by a cocycle supported on a loop.
By the condition, 
the class of this loop with coefficients in the local system is homologous to zero.
By Verdier duality, it is equivalent to the vanishing of the difference.
\end{proof}

\subsection{Double shuffle relations} 

Let $f\in\kf\bra{e_0, e_1}=\pi^{un}_1(\M{4})$ be an element written in logarithmic coordinates. Recall (Proposition \ref{Y}) that if
$f = 1 + \varphi_0 e_0 + \varphi_1 e_1$, then, by definition,
\begin{equation*}
f_Y = 1 + \varphi_1 e_1\in W = \kf\bra{y_1, y_2, \dots},
\end{equation*}
where $W$ is the transport algebra \eqref{logideal}, see Proposition \ref{StructureTr}.

Following \cite{Racinet, IKZ}, an element $f$ satisfies 
\emph{regularised double shuffle relations} 
if the following identity holds in $W\otimes W$:
\begin{equation}
f^{\ab}_Y\cdot\cop(f_Y)=f_Y\otimes f_Y
\label{dsr}
\end{equation}
where we set
\begin{equation}
f_Y^{\ab}=f_Y(y_1\otimes 1, 1\otimes y_1)
\label{ab}
\end{equation}
and $\cop$ is the harmonic coproduct \eqref{hcoproduct}.

\begin{rem}
A more standard formulation of the regularized double shuffle relations is in terms of group-likeness 
of $f_Y^{mod}$ with respect to $\cop{}$,
where $f_Y^{mod}$ differs by a gamma-factor:
\begin{equation*}
W\ni f_Y^{mod}=\Gamma^{-1}(y_1)f_Y
\end{equation*}
Thus, the term $f_Y^{\ab}$ is an analogue of the beta-function
\begin{equation*}
f_Y^{\ab}=\frac{\Gamma(y_1\otimes 1)\Gamma(1\otimes y_1)}{\Gamma(1\otimes y_1+y_1\otimes 1)}
\end{equation*}
It is known that if $f$ satisfies the pentagon equation, then such gamma-factors exist. The geometric meaning of this modified element is not clear to the author.
\end{rem}

\begin{theorem} The following three conditions on a symmetric element
$f\in\pi^{un}_1(\M{4})$ are equivalent:
\begin{enumerate}
\item\label{51} $\pen(f)\in [K, K]$, where $K = \bigcap_{i=3,4,5}\ker p_{i*}$;
\item\label{52} homological pentagon equation;
\item\label{53} regularized double shuffle relations.
\end{enumerate}
\end{theorem}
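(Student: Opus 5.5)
The equivalence (\ref{51})$\Leftrightarrow$(\ref{52}) is exactly Proposition~\ref{Hopent1}, so the real work is (\ref{52})$\Leftrightarrow$(\ref{53}). I would route it through Proposition~\ref{Hopent2}. That proposition turns the homological pentagon equation into a statement about sheaves: for all $\EE,\FF\in\Shsu$ without sections supported at $1$, the maps from $\Van{1}{\EE}\otimes\Van{1}{\FF}$ to $\Nea{0}{\con{\EE}{\FF}}$ and to $\Nea{1}{\con{\EE}{\FF}}$, built with $\ph{fI}$ in place of $\ph{I}$, are intertwined by $\Hol{fI}$. I would then express both sides of this intertwining with the transport algebra, compare them with the $I$-version (Theorem~\ref{Theorem} and Proposition~\ref{HolI}), and read off identity (\ref{dsr}).

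First I reduce to universal objects. As in the proof of Theorem~\ref{Hcoproduct}, it suffices to take $\EE=\FF=j_{1*}\Univer{}[1]$ with the universal pro-unipotent local system, so that $\var$ is an isomorphism at $1$. Then the vanishing cycles are free $W$-modules of rank one, and an identity of operators on $\Van{1}{\EE}\otimes\Van{1}{\FF}$ is the same as an identity in $W\otimes W$. By Theorem~\ref{Semi-holonomy-th} and Proposition~\ref{Y}, $\ph{fI}(-)=\ph{I}(-\,f_Y)$.

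Next I compute the two paths. At $1$, the class built from $\ph{fI}\otimes\ph{fI}$ goes through the convolution isomorphism (\ref{map2}). By Theorem~\ref{Hcoproduct}, the $W$-action on the convolution is via $\cop$, so this class equals the $I$-class precomposed with $f_Y\otimes f_Y$. Following it by $\Hol{fI}$ and comparing with $\Hol{I}$ via Proposition~\ref{hol2} (holonomy along $fI$ is semi-holonomy of the convolution), I get $\ph{fI}$ for $\con{\EE}{\FF}$. This introduces the factor $\cop(f_Y)$ on the convolution side.

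At $0$, the gluing construction (\ref{gath2}) uses $\Hol{fI}(\var(u))$ on each of the two components of the singular fibre. Each component sees only the loop around $1$ and the loop around $0$ of one factor. Using the matrices (\ref{mat1}) and (\ref{mat2}), I expect the resulting correction to reduce to the abelianized element $f_Y(y_1\otimes1,1\otimes y_1)=f_Y^{\ab}$: on the glued class only $e_1\otimes1$ and $1\otimes e_1$ survive, because $e_0$ acts block-triangularly and the $e_0$-words are killed in $f_Y$. The compatibility condition then reads $f_Y^{\ab}\cdot\cop(f_Y)=f_Y\otimes f_Y$ acting on the free rank-one module, which is (\ref{dsr}). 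Since the module is free, the converse also holds, which gives the equivalence.

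The main obstacle is the computation at $0$: justifying rigorously that the gluing of the two component classes along the path $P_0$ of Proposition~\ref{Holonomy1} contributes precisely $f_Y^{\ab}$, with the correct side of multiplication. Here I would first test on low-degree truncations (words of length at most two in $e_0,e_1$) using the explicit matrices (\ref{mat1}) and (\ref{mat2}). I would then argue by the Fox-derivative expansion from the proof of Theorem~\ref{Semi-holonomy-th}, loop by loop. The symmetry of $f$ is used to match $\overline{\jj\EE}$ (inversion) with $f(X_1,X_0)^{-1}$.
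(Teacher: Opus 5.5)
You have the same skeleton as the paper. (1)$\Leftrightarrow$(2) is Proposition~\ref{Hopent1}. For (2)$\Leftrightarrow$(3) you reduce via Proposition~\ref{Hopent2} to universal $j_{1*}\LL[1]$, use $\ph{fI}=\ph{I}(-\,f_Y)$, and get $\Delta_*(f_Y)$ from Theorem~\ref{Hcoproduct}. However, you put the abelianized factor in the wrong place, and the step you call the main obstacle cannot be carried out.

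\textbf{The side over $0$.} The two components of $p_3^{-1}(0)$ carry $\overline{\jj(\EE\otimes\FF_0)}$ and $\jj(\EE_0\otimes\FF)$. Each therefore sees the full non-commutative monodromy ($X_0$ and $X_1$) of one factor. Nothing forces the $fI$-version of the gluing \eqref{gath2} to reduce to an expression in $e_1\otimes 1$ and $1\otimes e_1$. In the paper this is the easy side. The class $\ph{fI}(v)\boxtimes\ph{fI}(v')$ specializes at $0$ to the glued class built with $fI$, which is the $fI$-analogue of Proposition~\ref{Map1}. So the $0$-side contributes $\ph{fI}\otimes\ph{fI}=(\ph{I}\otimes\ph{I})\circ(f_Y\otimes f_Y)$, with the full $f_Y$ in each factor. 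The matrices \eqref{mat1} and \eqref{mat2} describe the $W$-action on nearby cycles of the convolution at $1$. They produce $\Delta_*$, not a correction at $0$.

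\textbf{The side over $1$.} This is where the abelianization happens, and your plan skips it. The map \eqref{map2} does not pass through $\ph{}\otimes\ph{}$ of the factors. It passes through the exceptional component $(12)(345)$ of $p_3^{-1}(1)$, on which by \eqref{refl} the sheaf is $r^*\EE_1\otimes\FF_1$, a product of Verdier specializations at $1$. Its only monodromies are the commuting local monodromies $X_1\otimes 1$ and $1\otimes X_1$. So redoing Proposition~\ref{Ver2} with $fI$ in place of $I$ on this component (via Theorem~\ref{Semi-holonomy-th}) changes the construction by $f_Y^{\mathrm{ab}}$. Composing with $\ph{fI}$ for $\con{\EE}{\FF}$ then adds $\Delta_*(f_Y)$. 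The $1$-side is therefore $(\ph{I}\otimes\ph{I})\circ(f_Y^{\mathrm{ab}}\cdot\Delta_*(f_Y))$. Proposition~\ref{Hopent2} equates it with the $0$-side, and that equality is exactly \eqref{dsr}.

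\textbf{Why your version does not give \eqref{dsr}.} You place $f_Y\otimes f_Y$ and $\Delta_*(f_Y)$ both at $1$ and $f_Y^{\mathrm{ab}}$ at $0$. With that grouping the factors do not combine as in \eqref{dsr}, so your final identity is asserted rather than derived. Also, symmetry of $f$ enters through Proposition~\ref{Hopent2}: it makes the pentagon loop lie in a fibre of $p_3$. It does not enter through matching $\overline{\jj\EE}$ with $f(X_1,X_0)^{-1}$.
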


\begin{proof}
The equivalence of conditions (\ref{51}) and (\ref{52})
is Proposition \ref{Hopent1}. 

To prove the equivalence of (\ref{52}) and (\ref{53}),
we use Proposition \ref{Hopent2}.
Consider the convolution of $\EE$ and $\FF$, where $\EE, \FF\in \Shs$ have no sections supported at $1$.
As before, it suffices to consider perverse sheaves isomorphic to 
$j_{1*}L[1]$ for a local system $L$ on $\Gms$, and we may assume that $L$ is the universal pro-unipotent local system.

Proposition \ref{Hopent2} implies the commutativity of the following diagram, which is analogous to that in Proposition \ref{HolI}.
\begin{equation*}
\begin{tikzcd}[cramped]
\Nea{0}{\con{\EE}{\FF} }\arrow[r, "\ph{fI}"]
&\Ho{0}(\PP,\, \jj(\con{\EE}{\FF})) \arrow[d, "\eqref{conv2}"]\arrow[d, "\sim"', sloped]
&\Nea{1}{\con{\EE}{\FF}}\arrow[l, "\ph{fI}"']\\
\Van{1}{\EE}\otimes\Van{1}{\FF}\arrow[u]\arrow[r, "\ph{fI}\otimes\ph{fI}"]\arrow[bend right=12,equal]{rr}
& \Ho{0}(\PP,\, \jj \EE)\otimes \Ho{0}(\PP,\, \jj \FF)
&\Van{1}{\EE}\otimes\Van{1}{\FF}\arrow[u] \arrow[ul, dashed]\arrow[l, dashed]
\end{tikzcd}
\end{equation*}
Here the upper arrows are given by Proposition \ref{Hopent2}.

Consider the class of the convolution represented by
$\ph{fI}(v)\boxtimes\ph{fI}(v')$, where $v$ and $v'$ are vanishing cycles.
It is straightforward to check that the corresponding nearby cycle at $0$ is the one associated with $v\otimes v'$, constructed as in Proposition \ref{Hopent2}.
This justifies the commutativity of the left square of the diagram.

By Proposition \ref{Y} and Theorem \ref{Hcoproduct}, the sloped dashed arrow is obtained by composing the vertical identifications in diagram \eqref{theorem} with the operator $f^{\ab}_Y\cdot\cop(f_Y)$. Here the first factor accounts for the comparison between $\ph{I}$ and $\ph{fI}$ for the Verdier specialization of 
$p_5^*(\overline{\jj\EE}) \otimes p_4^*(\jj\FF)$
on the blown-up component, while the second factor accounts for the corresponding comparison for the convolution.

Using diagram \eqref{theorem} once more, we find that the left dashed arrow is equal to
\begin{equation}
(\ph{I}\otimes\ph{I})\circ (f^{\ab}_Y\cdot\cop(f_Y))
\label{last1}
\end{equation} 
Similarly, by Proposition \ref{Y}, we obtain
\begin{equation}
\ph{fI}\otimes\ph{fI}=(\ph{I}\otimes\ph{I})\circ (f_Y\otimes f_Y)
\label{last2}
\end{equation} 
Combining \eqref{last1}, \eqref{last2}, and the commutativity of the diagram, we obtain \eqref{dsr} and complete the proof.
\end{proof}	

Combining the theorem above with Proposition~\ref{sympent}, we obtain the following corollary.

\begin{cor*}
For $f\in \pi_1^{un}(\M{4})$, the pentagon equation implies the regularized double shuffle relations.
\end{cor*}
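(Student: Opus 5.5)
The corollary should follow by chaining Proposition~\ref{sympent} with the preceding theorem. The plan is to verify that a solution of the pentagon equation meets the hypotheses of that theorem and satisfies one of its equivalent conditions.

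First, let $f\in\pi_1^{un}(\M{4})$ satisfy \eqref{pent}. By Proposition~\ref{sympent}, $f$ is symmetric, i.e.\ \eqref{sym} holds. This matters because the theorem, and already the definition of the homological pentagon equation, assumes symmetry. Symmetry is what gives $p_{3*}\pen(f)=1$ via \eqref{pen-sym}, so that the loop $\pen(f)$ can be deformed into a general fiber $F_\lambda$ and defines a homology class at all.

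Second, condition (\ref{51}) of the theorem is immediate. Since $\pen(f)=1$ and $1$ is the identity of the subgroup $[K,K]$, we get $\pen(f)\in[K,K]$. The structural facts we rely on are only that $K=\bigcap_{i}\ker p_{i*}$ is a subgroup of $\pi_1^{un}(\M{5})$ and that $[K,K]\subset K$ contains the identity; in the pro-unipotent setting $[K,K]$ should be read as the closed commutator subgroup, which still contains $1$. Equivalently, one can check (\ref{52}) directly: the class $[\pen(f)]$ in $H_1(F_\lambda,\,p_4^*\mathbf{E}\otimes p_5^*\mathbf{F})$ is represented by the constant loop, hence vanishes. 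This gives the homological pentagon equation with no appeal to Proposition~\ref{Hopent1}.

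Third, the equivalence (\ref{51})$\Leftrightarrow$(\ref{53}), or (\ref{52})$\Leftrightarrow$(\ref{53}), proved in the theorem yields the regularized double shuffle relations \eqref{dsr} for $f_Y$. All the substantive work sits inside the theorem. I expect no real obstacle in the corollary itself; the only care needed is to state the symmetry hypothesis explicitly. The genuinely hard step upstream is the identification, in the theorem's proof, of the sloped dashed arrow with $f^{\ab}_Y\cdot\cop(f_Y)$, which the corollary takes as given.
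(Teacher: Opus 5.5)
Your proof is correct and is the paper's argument. Proposition~\ref{sympent} gives the symmetry hypothesis, $\pen(f)=1$ trivially lies in $[K,K]$, and the theorem's equivalence of (\ref{51}) with (\ref{53}) then yields \eqref{dsr}; checking (\ref{52}) directly via the constant loop is just an equivalent form of the same step.
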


\section*{Appendix: Homological definition of the self-intersection map}
Let $S$ be a compact oriented surface, possibly with boundary,
equipped with a trivialization of its tangent bundle.
Let $\tau$ be a nowhere-vanishing vector field on $S$
defining this trivialization.

For points $a,b\in S$, possibly lying on the boundary, let
\begin{equation*}
\PA{a}{b}=\kf\langle\pa{S}{a}{b}\rangle
\end{equation*}
denote the $\kf$-vector space spanned by homotopy classes of paths from $a$ to $b$.
Similarly, let
\begin{equation*}
\PAu{a}{b}=\kf\langle\pau{S}{a}{b}\rangle
\end{equation*}
denote the completed $\kf$-vector space spanned by homotopy classes of paths from $a$ to $b$.
The (pro-unipotent) local systems $\PA{a}{-}$ and $\PA{-}{b}$ 
(respectively, $\PAu{a}{-}$ and $\PAu{-}{b}$)
over $S$ are defined in the evident way.
Note that local systems $\PAu{a}{-}$  and $\PAu{-}{a}$  are canonically identified after passing from right to left actions via inversion in the fundamental groupoid. We nevertheless retain  notations $\PAu{a}{-}$  and $\PAu{-}{a}$  to keep track of the natural right and left module structures.

Let $x\colon [0, 1]\to S$ be an immersion of general position satisfying the following condition
\begin{equation}
\begin{gathered}
x(0)=a,\qquad x(1)=b,\qquad
x'(0)=\tau_a,\qquad x'(1)=\tau_b,
\\
\text{and the rotation number of }x'
\text{ with respect to }\tau\text{ is zero.}
\end{gathered}
\label{condition}
\end{equation}
Every homotopy class of paths from $a$ to $b$
may be represented by such an immersion, see \cite[Remark 5.1]{HainTur}.

Let $S^{[2]}$ denote the manifold with boundary obtained
from $S^2$ by replacing the diagonal $\Delta\subset S^2$
with the unit sphere bundle of its normal bundle.
Equivalently, $S^{[2]}$ is the real-oriented blow-up
of $S^2$ along $\Delta$.
The boundary component corresponding to the diagonal
is naturally identified with the unit tangent bundle $S(TS)$.
Let $p_i\colon S^{[2]}\to S$ denote the projection onto the $i$-th factor.

Given an immersion satisfying  condition \eqref{condition}, we define
three 1-chains $c_i$ in $S^{[2]}$ with coefficients
in the local system $p_1^*\PA{a}{-}\otimes p_2^*\PA{-}{b}$,
represented by the paths $\gamma_i$ together with horizontal sections $s_i$:
\begin{equation*}
c_i=\gamma_i\otimes s_i \in C_1(S^{[2]}; \,p_1^*\PA{a}{-}\otimes p_2^*\PA{-}{b}),
\qquad i=1,2,3,
\end{equation*}
Explicitly,
\begin{equation}
\begin{aligned}
c_1 &\colon \quad
\gamma_1(t)=(a, x(t)),
\qquad
&s_1&=1\otimes x\rvert_{[t, 1]}
\\
c_2 &\colon \quad
\gamma_2(t)=(x(t), b),
\qquad
&s_2&=x\rvert_{[0, t]}\otimes 1
\\
c_3 &\colon \quad
\gamma_3(t)=(x(t),\tau_{x(t)})
\qquad
&s_3&=x\rvert_{[0, t]}\otimes x\rvert_{[t, 1]}
\end{aligned}
\label{paths}
\end{equation}
The path $\gamma_3$ lies in the boundary component
corresponding to the diagonal.

One checks that the chain $c_1+c_2-c_3$ is a cycle and that its homology class is invariant under isotopies between paths. Thus, we obtain a map
\begin{equation}
c_1+c_2-c_{3}\colon\PA{a}{b}\to H_1(S^{[2]}; \,p_1^*\PA{a}{-}\otimes p_2^*\PA{-}{b})
\label{c1c2}
\end{equation}

Consider the homological long exact sequence associated
with the pair $(S^2, S^{[2]})$:
\begin{equation}
\begin{tikzcd}[cramped]
H_2(S^2;\, \mathcal{L}) \arrow[r]
& H_2(S^2, S^{[2]};\, \mathcal{L})\arrow[r]
& H_1(S^{[2]};\, \mathcal{L}) \arrow[r] 
& H_1(S^2;\, \mathcal{L})
\end{tikzcd}
\label{exact}
\end{equation}
where $\mathcal{L}$ denotes the local system $p_1^*\PA{a}{-}\otimes p_2^*\PA{-}{b}$ on 
$S^2$ and $S^{[2]}$.

By the K\"unneth formula, we have
\begin{equation*}
H_n(S^2;\, p_1^*\PA{a}{-}\otimes p_2^*\PA{-}{b})
=
\bigoplus_i
H_i(S;\,\PA{a}{-})
\otimes
H_{n-i}(S;\,\PA{-}{b}).
\end{equation*}
Since the local systems $\PA{a}{-}$ and $\PA{-}{b}$
are universal,
their homology vanishes in positive degrees.
Thus, the extreme terms of \eqref{exact} vanish.
Therefore, we obtain an isomorphism
\begin{equation}
H_1(S^{[2]};\, p_1^*\PA{a}{-}\otimes p_2^*\PA{-}{b})\cong H_2(S^2, S^{[2]};\, p_1^*\PA{a}{-}\otimes p_2^*\PA{-}{b}) 
\label{first}
\end{equation}
By the Thom isomorphism, we have
\begin{equation}
H_2(S^2, S^{[2]};\, p_1^*\PA{a}{-}\otimes p_2^*\PA{-}{b})\cong H_0(S;\, \PA{a}{-}\otimes \PA{-}{b})
\label{middle}
\end{equation}
By definition, 
\begin{equation}
H_0(S;\, \PA{a}{-}\otimes \PA{-}{b})=\PA{a}{-}\otimes_{\pi_1(S, -)} \PA{-}{b}=\PA{a}{b}
\label{last}
\end{equation}

Combining \eqref{c1c2} with the  chain of isomorphisms \eqref{first}--\eqref{last}
we obtain a map
\begin{equation}
\muop\colon \PA{a}{b}\to \PA{a}{b}
\label{mu}
\end{equation}

This construction may be reproduced word for word to obtain a pro-unipotent version of this map:
\begin{equation}
\muop\colon \PAu{a}{b}\to \PAu{a}{b}
\label{muu}
\end{equation}

\begin{rem}
The above construction is very close to, but does not coincide with,
the construction of \cite[Section~5]{HainTur}.
The main difference is that the latter construction
works for free loops rather than paths.
\end{rem}

The maps \eqref{mu} and \eqref{muu} admit a transparent group-theoretic interpretation.

Let
\begin{equation*}
K = \bigcap_{i=1,2}\ker p_{i*}
\subset \pi_1^{un}(S^{[2]})
\end{equation*}

\begin{prop}
\begin{enumerate}
\item\label{61}
For a surface $S$, points $a,b\in S$, and $K$ as above, one has an isomorphism
\begin{equation}
K/[K,K]\cong \PAu{a}{b}.
\label{KIso}
\end{equation}

\item\label{62}
The isomorphism \eqref{KIso} is given explicitly as follows:
the class $[x]$ is mapped to the class of the loop
$\gamma_1^{-1}X_{12}\gamma_1\in \fgu{S^{[2]}}{(a,b)}$:
\begin{equation}
\pau{S}{a}{b}\ni [x]
\longmapsto
[\gamma_1^{-1}X_{12}\gamma_1]\in K/[K,K],
\label{arrow}
\end{equation}
where $\gamma_1$ is defined by $x$ as in \eqref{paths}, and
\begin{equation*}
X_{12}\in
\pi_1^{un}(S^{[2]}, (b,\tau_b))
\end{equation*}
is represented by the loop in which one point moves once counterclockwise around the other.

\item\label{63}
For an immersion $x\colon [0,1]\to S$ as above,
consider the loop
\begin{equation}
\gamma_2\gamma_3^{-1}\gamma_1\in \pi^{un}_1(S^{[2]}, (a,b)),
\label{loop}
\end{equation}
where the paths $\gamma_i$ are defined in \eqref{paths}.
One verifies that the class of this loop belongs to  $K$. 
Under the identification \eqref{arrow}, the class of this loop in $K/[K,K]$ coincides with $\muop([x])$.
\end{enumerate}
\end{prop}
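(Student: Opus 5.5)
The plan is to reduce everything to a fibration argument. I would first replace $S^{[2]}$ by the ordered configuration space $F_2(S)=S^2\setminus\Delta$, into which it deformation retracts. The only subtlety is the base point on the boundary $S(TS)$, which I treat as a tangential base point, exactly as $(b,\tau_b)$ is used in the statement.

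\emph{Step 1, the structure of $K$.} By Fadell--Neuwirth, $p_1\colon F_2(S)\to S$ is a fibration with fiber $S\setminus\{a\}$. The surface $S$ is either a $K(\pi,1)$ (positive genus) or has nonempty boundary; the only other possibility, $S=S^2$, is excluded because the sphere admits no trivialization of its tangent bundle. Hence $\pi_2(S)=0$, and we get a short exact sequence
\[
1\to\pi_1(S\setminus\{a\})\to\pi_1(F_2(S))\to\pi_1(S)\to1 .
\]
It follows that $\ker p_{1*}=\pi_1(S\setminus\{a\})$. Consequently $K=\ker\bigl(\pi_1(S\setminus\{a\})\to\pi_1(S)\bigr)$, which is the normal closure of the small loop around the puncture.

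For the discrete group, $K/[K,K]=H_1(\widetilde{S}\setminus\pi^{-1}(a))$, where $\widetilde S$ is the universal cover. Since $H_1(\widetilde S)=H_2(\widetilde S)=0$, the long exact sequence of the pair $(\widetilde S,\widetilde S\setminus\pi^{-1}(a))$ together with excision identifies this group with the free $\kf$-module on small loops around the lifts of the puncture. Choosing a lift of the base point identifies these lifts with $\pa{S}{a}{b}$.

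For the pro-unipotent statement I would run the same argument with twisted coefficients, as in the proof of Proposition~\ref{Hopent1}. Hochschild--Serre identifies $K/[K,K]$ with $H_1(S\setminus\{b\};\,\PAu{a}{-})$; note that the roles of $a$ and $b$ can be swapped by using $p_2$. The exact sequence of the pair $(S, S\setminus\{b\})$ then has vanishing $H_1(S;\PAu{a}{-})$ and $H_2(S;\PAu{a}{-})$, because the local system is universal. The Thom/excision term is the fiber of $\PAu{a}{-}$ at $b$, which is $\PAu{a}{b}$. This is precisely the one-point analogue of the chain of isomorphisms \eqref{first}--\eqref{last}, and it proves \eqref{KIso}.

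\emph{Step 2, the explicit map \eqref{arrow}.} I trace a generator through these isomorphisms. The Thom class in the fiber over $b$ is represented by the small linking circle $X_{12}$ at $(b,\tau_b)$, carrying the coefficient at $b$. Transporting it to the base point $(a,b)$ along $\gamma_1$, which keeps the first point at $a$ and moves the second along $x$, conjugates it to $\gamma_1^{-1}X_{12}\gamma_1$. On the coefficient side this transport produces exactly the path $[x]$. Well-definedness on homotopy classes follows from homotopy invariance of the transport.

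\emph{Step 3, the loop \eqref{loop}.} First, $\gamma_2\gamma_3^{-1}\gamma_1$ lies in $K$. Under $p_1$, the path $\gamma_2$ maps to $x$, $\gamma_3^{-1}$ maps to $x^{-1}$, and $\gamma_1$ is constant. Under $p_2$, $\gamma_2$ is constant at $b$, $\gamma_3^{-1}$ maps to $x^{-1}$, and $\gamma_1$ maps to $x$. In both cases the image is null-homotopic.

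Next, I use that the class of a loop $\ell\in K$ in $K/[K,K]$ corresponds, under the identification of Step 1, to the twisted $1$-cycle obtained from $\ell$ by decorating each point $\ell(t)$ with the two path-halves needed to join it to $(a,b)$ through the projections. Applied to $\gamma_2\gamma_3^{-1}\gamma_1$, these decorations are precisely the sections $s_i$ of \eqref{paths}, so the twisted cycle is $c_1+c_2-c_3$. Hence the class of the loop matches $\muop([x])$ under \eqref{arrow}.

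The main obstacle is this last compatibility: showing that the abelianization map $K\to K/[K,K]\cong H_1(\cdot\,;\mathcal{L})$ sends a loop to the twisted cycle with these particular decorations, with the correct signs and left/right module conventions. Independence of the orientation and framing choices is where the rotation-number condition \eqref{condition} enters. I would check this first on the generator $\gamma_1^{-1}X_{12}\gamma_1$, where both sides are explicit, and then extend by the cocycle (Fox-derivative) property of the map. A second, more technical point is passing to pro-unipotent completions. Here I would rely on goodness of free and surface groups, so that the short exact sequence of Step 1 stays exact after completion.
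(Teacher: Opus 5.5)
Your proposal is correct but follows a different route from the paper, mainly in part (3).

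For (1), the paper applies Hochschild--Serre on $S^{[2]}$ with the two-point coefficients $p_1^*\PAu{a}{-}\otimes p_2^*\PAu{-}{b}$ and then uses the chain \eqref{first}--\eqref{last}. You instead use Fadell--Neuwirth to reduce to a one-point Thom argument. This gives a concrete description of $K$ as the normal closure of the puncture loop in $\pi_1(S\setminus\{a\})$, and a transparent discrete version via $\widetilde S\setminus\pi^{-1}(a)$.

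For (3), the paper goes through the self-intersection analysis: it uses the 2-chain from the proof of Proposition~\ref{Turaev} and decomposes the loop into conjugates of $X_{12}$ at the double points. You observe instead that the covering-space (Shapiro) identification $K/[K,K]\cong H_1(S^{[2]};\,p_1^*\PAu{a}{-}\otimes p_2^*\PAu{-}{b})$ sends $\gamma_2\gamma_3^{-1}\gamma_1$ to the cycle decorated by its projected path-halves. These decorations are exactly $s_2$, then $s_3$ traversed backwards, then $s_1$. So the loop goes on the nose to $c_1+c_2-c_3$, which is $\muop([x])$ by definition. I checked the decorations and the signs, and they are right. This makes (3) nearly tautological, needs no general-position input, and also explains why $c_1+c_2-c_3$ is closed. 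The paper's route, in exchange, produces the Turaev double-point formula along the way.

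You list the compatibility of your one-point isomorphism with the two-point chain defining $\muop$ as the main obstacle, but it is immediate. The decorated cycle of $\gamma_1^{-1}X_{12}\gamma_1$ is the linking circle over $a$ with coefficient $1\otimes[x]$, and the Thom isomorphism sends it to $[x]$. By your Step 1 these classes topologically span $K/[K,K]$, so linearity and continuity suffice. No Fox-calculus extension is needed. The rotation-number condition plays no role here either; it only matters for homotopy invariance of $\muop$.

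There is one bookkeeping slip, inherited from the statement. $\gamma_1$ runs from $(a,\tau_a)$ to $(a,b)$, so in $\gamma_1^{-1}X_{12}\gamma_1$ the linking loop must be based at $(a,\tau_a)$, not $(b,\tau_b)$. You should therefore trace the Thom class through the $p_1$-version $H_1(S\setminus\{a\};\,\PAu{-}{b})$. With your $p_2$-version and the Thom class at $b$, the natural representative would be $\gamma_2X_{12}\gamma_2^{-1}$ instead.
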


\begin{proof}
\eqref{61}
The chain of isomorphisms \eqref{first}--\eqref{last} identifies
$H_1(S^{[2]};\, p_1^*\PAu{a}{-}\otimes p_2^*\PAu{-}{b})$
with $\PAu{a}{b}$.
By arguments identical to those used in the proof of Proposition \ref{Hopent1},
we have
\begin{equation*}
H_1(S^{[2]};\, p_1^*\PAu{a}{-}\otimes p_2^*\PAu{-}{b})
\cong K/[K,K]
\end{equation*}

\eqref{62}
One directly checks that \eqref{arrow} defines the isomorphism \eqref{KIso}.

\eqref{63}
For any $T\in [0, 1]$ there is an element
\begin{equation}
({\gamma_1^{-1}}\rvert_{[0, 1-T]} {\gamma_2}\rvert_{[0, T]})
\, X_{12}\,
({\gamma_1}\rvert_{[T, 1]} {\gamma_2^{-1}}\rvert_{[1-T, 1]})
\in \fgu{S^{[2]}}{(a,b)},
\label{element}
\end{equation}
where 
$X_{12}\in
\pi_1^{un}(S^{[2]}, (x(T),\tau_{x(T)}))$
is represented by the loop in which one point moves once counterclockwise around the other.
All elements \eqref{element} represent the same class in $K/[K,K]$.
Writing the isomorphism \eqref{first} explicitly
(see the proof of Proposition \ref{Turaev} below),
one sees that $\muop([x])$ is equal to the sum of the classes represented by the elements \eqref{element} for values of $T$ corresponding to self-intersection points.
\end{proof}

The analogous statement may be formulated and proved in the same way in the non-pro-unipotent setting.

\begin{rem}
There is a remarkable similarity between the vanishing of $\muop$
and the homological pentagon equation as in Proposition \ref{Hopent1}.
In this sense, the conjecture that the vanishing of the self-intersection map of a path implies the triviality of loop \eqref{loop}
may be viewed as a toy model for the  conjecture mentioned in the introduction that regularized double shuffle relations imply the pentagon equation.
\end{rem}

In \cite{Turaev1979} (see also \cite{Massuyeau2018}) the 
notion of \emph{self-intersection map} was introduced. Here is an adaptation 
of this definition in our situation.

Let $x\colon [0, 1]\to S$ be an immersion of general position satisfying condition
\eqref{condition}. Let
\begin{equation}
D=\{(t_1, t_2)\in [0,1]^2\mid 0<t_1<t_2<1 \quad\mbox{and}\quad x(t_1)=x(t_2)\}
\label{D}
\end{equation}
For $d=(t_1, t_2)\in D$ denote by $y_d$ the element of $\pa{S}{a}{b}$
represented by 
\begin{equation*}
x\rvert_{[0, t_1]}x\rvert_{[t_2, 1]}
\end{equation*}
Let $\varepsilon_d\in\{\pm1\}$ denote the orientation sign of the basis
$(x'(t_1),x'(t_2))$.
The self-intersection of the path
is an element of $\PA{a}{b}$ given by
\begin{equation}
\sum_{d\in D}\varepsilon_d y_d
\label{self}
\end{equation}

In \cite[Section 3.3]{Massuyeau2018} it was shown that this map is continuous with respect
to the topology given by powers of the augmentation ideal, providing a  pro-unipotent
version of this definition.

\begin{prop}
The self-intersection map defined by \eqref{self}
coincides with the map $\muop$, see \eqref{mu}.
\label{Turaev}
\end{prop}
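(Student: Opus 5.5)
We will prove Proposition~\ref{Turaev} by writing the class $c_1+c_2-c_3\in H_1(S^{[2]};\mathcal{L})$ explicitly as the boundary of a relative $2$-chain in $(S^2,S^{[2]})$. We then read off its image under the Thom isomorphism \eqref{middle}. Here $\mathcal{L}=p_1^*\PA{a}{-}\otimes p_2^*\PA{-}{b}$.

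The natural candidate is the triangle
\[
\Sigma=\{(x(t_1),x(t_2))\mid 0\le t_1\le t_2\le 1\}\subset S^2,
\]
carrying the horizontal section $x\rvert_{[0,t_1]}\otimes x\rvert_{[t_2,1]}$ of $\mathcal{L}$. Its three edges are as follows. The edge $t_1=0$ gives $\gamma_1$ with section $s_1$. The edge $t_2=1$ gives $\gamma_2$ with section $s_2$. The diagonal edge $t_1=t_2$ lies on $\Delta$.

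After the real blow-up, the diagonal edge lifts to the boundary component $S(TS)$. Near $t_1=t_2=t$ the difference $x(t_2)-x(t_1)$ points along $x'(t)$, so the lift is $(x(t),x'(t))$. Condition \eqref{condition} says the rotation number of $x'$ relative to $\tau$ vanishes and that $x'=\tau$ at the endpoints. Hence $(x(t),x'(t))$ is homotopic, rel endpoints inside $S(TS)$, to $\gamma_3(t)=(x(t),\tau_{x(t)})$. We add the corresponding thin $2$-chain in $S(TS)\subset S^{[2]}$; it carries the section $s_3$, since the section is constant along the fibre circles. With orientations checked, the boundary of $\Sigma$ in the blown-up sense is $c_1+c_2-c_3$ plus a correction coming from the points where $\Sigma$ meets $\Delta$ away from the edge.

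Because $x$ is a generic immersion, $\Sigma$ meets the diagonal $\Delta$ transversally. These crossings occur exactly at the interior points $d=(t_1,t_2)\in D$ from \eqref{D}. To obtain a chain in $S^{[2]}$ we remove a small disc around each such $d$. The resulting chain $\Sigma^\circ$ satisfies
\[
\partial\Sigma^\circ=c_1+c_2-c_3-\sum_{d\in D}\partial_d,
\]
where $\partial_d$ is a small circle linking $\Delta$ at $x(t_1)=x(t_2)$. This circle carries the section $x\rvert_{[0,t_1]}\otimes x\rvert_{[t_2,1]}$, and it winds around $\Delta$ with sign $\varepsilon_d$, the local intersection number of $\Sigma$ with $\Delta$, i.e.\ the orientation of $(x'(t_1),x'(t_2))$.

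Therefore, in $H_1(S^{[2]};\mathcal{L})$ the class $c_1+c_2-c_3$ equals $\sum_d\partial_d$. Under the connecting isomorphism \eqref{first}, each $\partial_d$ is the image of the fibre disc of the normal bundle to $\Delta$ at that point. The Thom isomorphism \eqref{middle} sends this fibre disc to the point class $x(t_1)$ with coefficient $\varepsilon_d\,x\rvert_{[0,t_1]}\otimes x\rvert_{[t_2,1]}\in\PA{a}{-}\otimes\PA{-}{b}$. The identification \eqref{last} concatenates the two factors, giving $\varepsilon_d y_d$. Summing over $D$ yields \eqref{self}, and the pro-unipotent version follows by continuity (\cite{Massuyeau2018}).

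The main obstacle is the bookkeeping of signs and orientations. Three conventions must be made to agree: the orientation of $\Sigma$ versus the orientation of the boundary edges in \eqref{c1c2}, the orientation of the linking circle, and the orientation convention of the Thom class. Together these fix the overall sign relating $\varepsilon_d$ to the Thom isomorphism. A second delicate point is the diagonal edge: we must check that the frame-homotopy from $x'$ to $\tau$ (which exists by the rotation-number condition) introduces no extra linking with the fibres of $S(TS)\to S$. That is exactly why the rotation number is required to vanish.
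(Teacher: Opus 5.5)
Your proposal is correct and follows essentially the same route as the paper. Both use the $2$-chain $\{(x(t_1),x(t_2))\mid 0<t_1<t_2<1\}$ with horizontal section $x\rvert_{[0,t_1]}\otimes x\rvert_{[t_2,1]}$ as a homology in $S^{[2]}$ between $c_1+c_2-c_3$ and the small circles linking the diagonal at the points of $D$, and each such circle goes to $\varepsilon_d y_d$ under \eqref{first}--\eqref{last}. Your explicit use of the zero rotation number, to homotope the lifted diagonal edge $(x(t),x'(t))$ onto $\gamma_3$, fills in a step the paper leaves implicit.
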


\begin{proof}
For an immersion $x\colon [0, 1]\to S$ as above
consider the 2-chain in $S^{[2]}$ with coefficients
in the local system $p_1^*\PA{a}{-}\otimes p_2^*\PA{-}{b}$,
represented by the submanifold 
\begin{equation*}
\{(x(t_1), x(t_2))\mid  0<t_1<t_2<1 \quad\mbox{and}\quad x(t_1)\neq x(t_2)\}
\end{equation*}
equipped with the horizontal section $x\rvert_{[0, t_1]}\otimes x\rvert_{[t_2, 1]}$.
The boundary of this submanifold consists of the union of 1-chains \eqref{paths}
and of boundary circles around the diagonal corresponding to elements of the set $D$, see
\eqref{D}. Thus, this 2-submanifold defines a homology between the
cycle $c_1+c_2-c_3$ defining  $\muop$ and the union of these circles. 
One checks that the homology class of each such circle corresponds to  $\varepsilon_d y_d$ 
under the isomorphism \eqref{last}.
\end{proof}

\begin{rem}
Our definition of $\muop$ differs slightly from definitions of \cite{Turaev1979} and 
\cite{Massuyeau2018}. In the first reference the self-intersection is defined for any path 
without condition \eqref{condition}, but it takes value in the quotient
of $\PA{a}{b}$ by the initial path. 
In the second paper, the self-intersection map is defined for arbitrary lifts of the path to the bundle of unit circles in the tangent bundle of $S$.
\end{rem}




\bibliographystyle{alpha}
\bibliography{dt.bib}

\end{document}